\newtheorem*{thm'}{Theorem 1.8'}
\newtheorem{thm}{Theorem}[section]
\newtheorem{lm}[thm]{Lemma}
\newtheorem{defn}[thm]{Definition}
\newtheorem{prop}[thm]{Proposition}
\newtheorem{coro}[thm]{Corollary}
\newtheorem{rmk}[thm]{Remark}
\newtheorem{example}[thm]{Example}
\newcommand{\norm}[1]{\lVert#1\rVert}
\newcommand{\abs}[1]{\left\vert#1\right\vert}
\newcommand{\op}[1]{\left(#1\right)}
\newcommand{\eps}{\varepsilon}
\newcommand{\defeq}{\vcentcolon=}
\newcommand{\one}{\mathbf{1}}
\newcommand{\R}{\mathbf{R}}
\newcommand{\C}{\mathbf{C}}
\newcommand{\h}{\mathbf{H}}
\newcommand{\Z}{\mathbf{Z}}
\newcommand{\N}{\mathbf{N}}
\newcommand{\SL}{\mathrm{SL}}
\newcommand{\GL}{\mathrm{GL}}
\newcommand{\PSL}{\mathrm{PSL}}
\newcommand{\G}{\Gamma}
\newcommand{\g}{\gamma}
\newcommand{\bx}{{\bf x}}
\newcommand{\by}{{\bf y}}
\newcommand{\bo}{{\bf 0}}
\newcommand{\be}{{\bf e}}
\newcommand{\cN}{\mathcal{N}}
\newcommand{\cU}{\mathcal{U}}
\newcommand{\gs}{\sigma}
\newcommand{\fa}{\mathfrak{a}}
\newcommand{\fb}{\mathfrak{b}}
\newcommand{\fab}{\mathfrak{ab}}
\newcommand{\bk}{\backslash}
\newcommand{\re}{\mathfrak{Re}}
\newcommand{\bbm}{\begin{bmatrix}}
\newcommand{\ebm}{\end{bmatrix}}
\newcommand{\bpm}{\begin{pmatrix}}
\newcommand{\epm}{\end{pmatrix}}
\newcommand{\bsm}{\left(\begin{smallmatrix}}
\newcommand{\esm}{\end{smallmatrix}\right)}
\numberwithin{equation}{section}
\begin{document}

\title{Pairs in discrete lattice orbits\\ with applications to Veech surfaces}
\titlemark{Pairs in discrete lattice orbits}

\emsauthor{1}{Claire Burrin}{C.~Burrin}
\emsauthor{2}{Samantha Fairchild}{S.~Fairchild}
\emsauthor{3}{with an appendix by Jon Chaika}{J.~Chaika}

\emsaffil{1}{University of Zurich Institute of Mathematics,
Winterthurerstrasse 190,
8057 Z\"{u}rich Switzerland;\email{claire.burrin@math.uzh.ch}}
\emsaffil{2}{Max Planck Institute for Mathematics in the Sciences, Inselstraße 22, 04103 Leipzig Germany; \email{samantha.fairchild@mis.mpg.de}}
\emsaffil{3}{University of Utah Department of Mathematics 203, 155 S 1400 E RM 233, Salt Lake City, UT, 84112-0090, USA; \email{ chaika@math.utah.edu }}

\classification[]{22E40, 37E35, 11F72}

\keywords{translation surfaces, saddle connections, Siegel--Veech transform, nonuniform lattice}

\begin{abstract}
Let $\Lambda_1$, $\Lambda_2$ be two discrete orbits under the linear action of a  lattice $\G<\SL_2(\R)$ on the Euclidean plane. We prove a Siegel--Veech-type integral formula for the averages
\[
\sum_{\bx\in\Lambda_1} \sum_{\by\in\Lambda_2} f(\bx, \by)
\]
from which we derive new results for the set $S_M$ of holonomy vectors of saddle connections of a Veech surface $M$. This includes an effective count for generic Borel sets with respect to linear transformations, and upper bounds on the number of pairs in $S_M$ with bounded determinant and on the number of pairs in $S_M$ with bounded distance. This last estimate is used in the appendix to prove that for almost every $(\theta,\psi)\in S^1\times S^1$ the translation flows $F_\theta^t$ and $F_\psi^t$ on any Veech surface $M$ are disjoint.

\end{abstract}

\maketitle


\section{Introduction}

Let $\Gamma\subset G=\SL_2(\R)$ be a lattice acting linearly on the Euclidean plane $\R^2$. Any orbit of $\Gamma$ is either a dense or discrete subset of $\R^2$. When the orbit is dense, a limiting distribution was computed by Ledrappier \cite{Led} and extended to more general lattices in locally compact groups by Gorodnik and Weiss \cite{GW}. When the orbit is discrete, the lattice must be nonuniform and the problem of understanding the distribution is a venerable one that goes back to Gauss. 

A more recent incentive to understand the distribution of discrete lattice orbits is motivated by the study of rational polygonal billiards and of the linear flow $F_\theta^t$ (with direction $\theta \in S^1$) on translation surfaces. 
A complicating aspect in the study of the translation flow is the presence of the saddle points --- for the billiard flow, the future trajectory of a ball hitting a corner of the polygonal table is ill-defined. Corner-to-corner trajectories correspond to finite geodesics on the translation surface called saddle connections. The set $S_M$ of holonomy vectors of saddle connections on a translation surface $M$ is a discrete planar set in $\R^2$ that records the length and direction of each saddle connection. For a typical translation surface, the number $|S_M\cap B_R|$ of saddle connections of length $\|\bx\|<R$ grows quadratically, with a growth asymptotic of the form
\begin{align}\label{countM}
|S_M\cap B_R| = c_M R^2 + O\left(R^{2-\delta}\right)
\end{align}
\cite{EM,NRW}, where $\delta>0$ is a nontrivial but nonexplicit power saving. 


In the case of Veech surfaces\footnote{ We recall that a translation surface is a Veech surface if the image in $\PSL_2(\R)$ of its stabilizer group under the action of $\SL_2(\R)$ on the moduli space of all translation surfaces is a lattice. Veech surfaces are sometimes called lattice surfaces.} the set $S_M$ of holonomy vectors of saddle connections is a finite disjoint union of discrete lattice orbits \cite{Veech89}, which may then be accessed using a combination of tools and ideas from dynamics, ergodic theory, spectral theory, or metric geometry. Hence one can hope to say more, and in particular to specify results to individual surfaces. This is of interest considering for instance that the set of surfaces constructed from rational billiard polygons have null measure in the stratum. As such, results for typical surfaces do not yield new information on billiards in rational polygons.


Our main theorem is an explicit mean value formula for the Siegel--Veech transform of pairs of discrete lattice orbits in the plane. 
We postpone the statement of the main theorem to \cref{sec:MVT} to first describe some applications to the study of holonomies for Veech surfaces. The main novelty of this paper is to obtain estimates towards counting pairs of vectors in $S_M$. For this we build on tools from the geometry of numbers, the spectral theory of automorphic forms, the weak mixing of the translation flow for nonarithmetic Veech surfaces established by Avila and Delecroix \cite{AD}, and previous works of the authors \cite{BNRW,CH,Fai21}.

\subsection{Applications}

Let $M$ be a Veech surface, $\G_M$ its Veech group, and $S_M$ the set of holonomy vectors of its saddle connections. The results we present here for $S_M$ are proven in the main text in the more general setting of discrete lattice orbits in the plane. References to those more general statements are indicated in \cref{sec:disc}.

\subsubsection{Counting in Borel sets} 
In the case of Veech surfaces, we have 
\begin{align}\label{asymp}
|S_M\cap B_R| = c_M R^2 + O\left(R^{2-\delta}\right),
\end{align}
\cite{Veech89,BNRW} where  
this time the power-saving $\delta=\delta(\G_{M})$ is explicit,\footnote{  We refer the reader to \cref{sec:prevcount} for an explicit description of $\delta$, which is determined by the bottom of the residual spectrum of $\G_M$. It is worth noting that there are known constructions of lattices $\G$ for which $\delta$ can be arbitrarily close to 0 \cite{Selberg65}. 
}  in contrast to (\ref{Thm:1}). In fact the same asymptotic holds when replacing $S_M$ by $gS_M$ for any $g\in G$ --- this amounts to counting points in $S_M$ that lie in an ellipsoid centered at the origin. For more general shapes it is typical of analytic approaches that a lack of regularity of the shape's boundary leads to weaker power-savings; see \cite[Theorems 2.6, 2.7]{BNRW} or \cref{sec10} for examples. 
The following theorem recovers a (nearly) optimal count for typical shapes with respect to linear transformations.

\begin{thm}\label{Thm:1}
Let $\Omega\subset\R^{2}$ be a bounded Borel set that contains the origin, and consider its dilates $\Omega_R=R\cdot\Omega$.  Then for almost every\footnote{With respect to the Euclidean metric induced by the matrix representation of $A$.} linear transformation $A$ we have
\[
|A(S_M)\cap \Omega_R| |\det(A)| =c_M |\Omega| R^2+  O\left(R^{2-\delta}  \log^{3/2}(R)\right),
\]
where $\delta=\delta(\G_{M})$ is as in \eqref{asymp}.
\end{thm}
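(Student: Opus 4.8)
The plan is to deduce this from the pair correlation formula of \cref{sec:MVT} by a second–moment (Borel--Cantelli) argument, after a dyadic decomposition in the size of the vectors. First I would strip off the determinant: writing $A=\lambda B$ with $\lambda>0$ and $B\in\SL_2^{\pm}(\R)$ gives $\abs{A(S_M)\cap\Omega_R}\,\abs{\det A}=\abs{B(S_M)\cap\Omega_{R/\lambda}}\,\lambda^{2}$, Lebesgue measure on $M_2(\R)$ disintegrates in these coordinates as $\lambda^{3}\,d\lambda$ against a Haar measure on $\SL_2^{\pm}(\R)$, and a reflection removes the sign (replacing $\Omega$ by its mirror image, still admissible). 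A Fubini argument then reduces the theorem to the statement that for $\mu_Y$-almost every point $g$ of the finite-volume space $Y=\SL_2(\R)/\G_M$, with its Haar probability measure,
\[
N_\rho(g):=\abs{g(S_M)\cap\Omega_\rho}=c_M\abs{\Omega}\rho^{2}+O_g\!\bigl(\rho^{2-\delta}\log^{3/2}\rho\bigr)\qquad(\rho\ge 1);
\]
here $N_\rho$ descends to $Y$ because $S_M$ is $\G_M$-invariant. Note that when $\Omega$ is an ellipse centred at the origin this is just \eqref{asymp} applied to the Veech surface $gM$ (whose Veech group is conjugate to $\G_M$, with the same $\delta$), valid for \emph{every} $g$; the genericity in $g$ is needed precisely because $\Omega$ is an arbitrary Borel set, so that one cannot control the orbit $gS_M$ against $\Omega$ deterministically.

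For the moments: since $\bo\notin S_M$, the one-orbit Siegel--Veech formula gives $\int_Y\abs{g(S_M)\cap E}\,d\mu_Y(g)=c_M\abs{E}$ for every bounded Borel $E\subseteq\R^{2}$, so $\int_Y N_\rho\,d\mu_Y=c_M\abs{\Omega}\rho^{2}$. Applying the main theorem of \cref{sec:MVT} to the pair $\Lambda_1=\Lambda_2=S_M$ and the function $\one_E\otimes\one_E$ computes $\int_Y\abs{g(S_M)\cap E}^{2}\,d\mu_Y$; its leading term should be $c_M^{2}\abs{E}^{2}$, cancelling the square of the mean, and leaving a diagonal contribution $O(\abs{E})$ together with the ``shear'' terms indexed by the $\G_M$-orbits of non-collinear pairs in $S_M\times S_M$. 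The essential point is that, for $E$ contained in a ball of radius $T$, these remaining terms are $O\!\bigl(T^{4-2\delta}\bigr)$ up to logarithmic factors, the exponent being forced by the bottom of the residual spectrum of $\G_M$ exactly as in \eqref{asymp} (morally $N_\rho(g)=c_M\abs{\Omega}\rho^{2}+\mathcal{E}(g)\rho^{2-\delta}+(\text{lower order})$ with $\mathcal{E}\perp 1$ a residual-Eisenstein term, and one squares). Running the same computation on symmetric differences should give an $L^{2}(\mu_Y)$-modulus of continuity for $\rho\mapsto N_\rho$ controlled by $\abs{\rho\Omega\,\triangle\,\rho'\Omega}$ and the common scale.

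To conclude I would decompose $\Omega_\rho$ into its pieces $\Omega_\rho\cap\{2^{j}\le\|\bx\|<2^{j+1}\}$, $j\lesssim\log\rho$, so that $N_\rho$ is a sum of $O(\log\rho)$ counts in Borel subsets of dyadic shells, and on a dyadic block $\rho\in[2^{k},2^{k+1}]$ apply a Dudley/Menshov--Rademacher maximal inequality to the centred process $\rho\mapsto N_\rho(g)-c_M\abs{\Omega}\rho^{2}$, using the variance and modulus bounds above; this should give $\bigl\|\sup_{\rho\in[2^{k},2^{k+1}]}\abs{N_\rho-c_M\abs{\Omega}\rho^{2}}\bigr\|_{L^{2}(\mu_Y)}^{2}\ll k\,2^{2k(2-\delta)}$, a single logarithmic loss. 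Chebyshev then gives $\mu_Y\bigl(\sup_{\rho\in[2^{k},2^{k+1}]}\abs{N_\rho-c_M\abs{\Omega}\rho^{2}}>2^{k(2-\delta)}k^{3/2}\bigr)\ll k^{-2}$, which is summable, so Borel--Cantelli produces for $\mu_Y$-a.e.\ $g$ a threshold $k_0(g)$ beyond which $\sup_{\rho\in[2^{k},2^{k+1}]}\abs{N_\rho(g)-c_M\abs{\Omega}\rho^{2}}\le 2^{k(2-\delta)}k^{3/2}\asymp\rho^{2-\delta}\log^{3/2}\rho$; combined with the first paragraph this is the theorem. This is where the exponent $\tfrac{3}{2}$ on the logarithm comes from — a single $\log$ lost in the maximal inequality against a convergent $\log^{3}$ Borel--Cantelli series.

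The main obstacle is the second-moment estimate: extracting from the pair formula that the leading term is \emph{exactly} $c_M^{2}\abs{E}^{2}$ (so it annihilates the square of the mean) and that the sum over relations is $O(T^{4-2\delta})$ requires inputting the spectral theory of automorphic forms for $\G_M$ — equivalently the power saving of \eqref{asymp} — and controlling uniformly the infinitely many $\G_M$-orbits of pairs. A secondary but genuinely delicate point is the passage from a single scale to all scales $\rho$ simultaneously for a truly arbitrary Borel $\Omega$: the dilation $\rho\mapsto\one_{\rho\Omega}$ is only $L^{1}$-continuous, with no modulus of continuity in general, so the maximal inequality must be set up so as not to require one; by comparison, the dyadic Borel--Cantelli step itself is routine.
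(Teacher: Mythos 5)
Your overall architecture --- a variance bound for the counting function, a dyadic decomposition in the dilation parameter, Chebyshev plus Borel--Cantelli with a summable weight, and monotone interpolation between consecutive scales --- is Schmidt's method and is exactly what the paper does (\cref{prop:11} and \cref{countN}); your bookkeeping of the $\log^{3/2}$ (one $\log$ from the Cauchy--Schwarz/maximal step over the $O(\log\rho)$ dyadic increments, two from the $k^{-2}$ Borel--Cantelli weight, i.e.\ $\psi(t)=t^{-2}$) also matches. The additional spatial decomposition of $\Omega_\rho$ into dyadic annuli is unnecessary and, if actually summed, would cost a further $\log$; the paper only decomposes dyadically in the volume of the linearly ordered family and uses $\Omega_{N}\subseteq\Omega\subseteq\Omega_{N+1}$ to handle the lack of continuity of $\rho\mapsto\one_{\rho\Omega}$, which is the same fix you describe.

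The genuine gap is in your first step. By writing $A=\lambda B$ and applying Fubini you reduce to an almost-everywhere statement over $\SL_2(\R)/\G_M$ with Haar measure, and you then need $\int_{G/\G_M}\bigl(|gS_M\cap E|-c_M|E|\bigr)^2\,d\mu \ll |E|^{2-\delta}$ for an \emph{arbitrary} bounded Borel set $E$. But the pair formula over $G/\G_M$ (\cref{Thm1}) expresses this second moment as a \emph{discrete} sum $\sum_{c}\tfrac{\varphi(c)}{|c|}\int_\R\int_{\R^2}\one_E(\bx)\one_E(t\bx+c\bx^*)\,d\bx\,dt$ over admissible determinants; extracting the main term $c_M^2|E|^2$ with a power saving requires a summation by parts in $c$ against $c\mapsto\int\one_E(\bx)\,\mathcal{H}^1\bigl(E\cap\{t\bx+c\bx^*\}\bigr)\tfrac{d\bx}{\|\bx\|}$, which for a general Borel set is merely an $L^1$ function of $c$ with no bounded variation, so \cref{Good} cannot be played off against it. The paper avoids this precisely by \emph{not} stripping off the determinant: the extra integration $\int_0^1\nu^2\,d\nu$ in the cone measure smooths the discrete sum into the kernel $\Phi(|\bx\wedge\by|)$ (\cref{thm-useful}), after which the asymptotic $\Phi(t)=c_\G+O(t^{-\delta})$ of \cref{asymptot} combined with Schmidt's integral inequality (\cref{lem:schmidthm3}), valid for arbitrary measurable sets, yields \cref{prop:11} in a few lines from \eqref{eq:split}. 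Your reduction thus replaces the needed variance bound by a strictly stronger one (true for balls by the residual-spectrum expansion, but your heuristic $N_\rho(g)=c_M|\Omega|\rho^2+\mathcal{E}(g)\rho^{2-\delta}+\cdots$ presupposes boundary regularity of $\Omega$, which is exactly what the theorem is designed to dispense with). You correctly flag the second moment as the main obstacle, but the obstacle is of your own making: keep the determinant direction inside the average and it disappears.
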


\begin{rmk}
The restriction to dilates is cosmetic; see \cref{countN} for a counting asymptotic that holds for every element of a linearly ordered family of Borel sets in the plane. 
\end{rmk}

\subsubsection{Weak uniform discreteness}
Recall that a discrete planar set is $\eta$-uniformly discrete if $|\Lambda\cap B_\eta(\bx)|\leq 1$ for all $\bx\in\R^2$. Answering a question of Barak Weiss, Wu showed that $S_M$ is not uniformly discrete when $M$ is a nonarithmetic Veech surface \cite{Wu}; in other words there exists a pair of vectors that are arbitrarily close. On the other hand, uniform discreteness is easy to prove when $M$ is an arithmetic Veech surface; see \cref{prop:arithuni}. The next theorem quantifies the failure of uniform discreteness when $M$ is nonarithmetic.

\begin{thm}\label{thm:epsM}
Let $M$ be a Veech surface. 
For each $\eps>0$ there is an $\eta>0$ such that
\begin{align*}
\limsup_{R\to\infty}\, \frac{|\{\bx\in S_M\cap B_R : |S_M\cap B_\eta(\bx)|\geq 2\}|}{|S_M\cap B_R|} <\eps.
\end{align*}
\end{thm}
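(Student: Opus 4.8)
The plan is to deduce \cref{thm:epsM} from \cref{Thm:1} applied to a well-chosen set, together with the pair-counting estimate advertised in the abstract. First I would observe that the quantity to be bounded counts those $\bx\in S_M\cap B_R$ for which there exists $\by\in S_M$, $\by\neq\bx$, with $\|\bx-\by\|<\eta$. Each such $\bx$ contributes at least one ordered pair $(\bx,\by)$ with $\bx,\by\in S_M\cap B_{R+\eta}$ and $0<\|\bx-\by\|<\eta$, so the numerator is at most
\begin{align*}
\#\{(\bx,\by)\in (S_M\cap B_{R+\eta})^2 : 0<\|\bx-\by\|<\eta\}.
\end{align*}
This is exactly a count of pairs in $S_M$ with bounded distance, so I would invoke the corresponding general statement from \cref{sec:disc} (the third application mentioned in the abstract): an upper bound of the form $C_M\,\eta^2 R^2$, or more conservatively $o_\eta(R^2)$ after dividing by $R^2$, with the key feature that the implied constant degenerates with $\eta$, i.e. the $\eta$-dependent factor tends to $0$ as $\eta\to0$. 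The Siegel--Veech mean value formula for pairs (the main theorem) is what produces such a bound: integrating the Siegel--Veech transform of the indicator of the thin region $\{0<\|\bx-\by\|<\eta,\ \|\bx\|,\|\by\|<R\}$ over the relevant homogeneous space gives a main term proportional to the volume of that region, which is $O(\eta^2 R^2)$.

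The second ingredient is the denominator. By \eqref{asymp} we have $|S_M\cap B_R|=c_M R^2+O(R^{2-\delta})$, and likewise $|S_M\cap B_{R+\eta}|=c_M R^2+O_\eta(R^{2-\delta})+O(\eta R)$, so for fixed $\eta$ the ratio in \cref{thm:epsM} satisfies
\begin{align*}
\limsup_{R\to\infty}\frac{\#\{(\bx,\by)\in(S_M\cap B_{R+\eta})^2 : 0<\|\bx-\by\|<\eta\}}{|S_M\cap B_R|}
\leq \frac{C_M\,\eta^2}{c_M}.
\end{align*}
Given $\eps>0$ it then suffices to choose $\eta>0$ small enough that $C_M\eta^2/c_M<\eps$, which proves the theorem. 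Throughout I would be careful that the pair-count upper bound is a genuine \emph{upper} bound valid for \emph{this particular} surface $M$ (not merely almost every surface), which is the point of restricting to Veech surfaces where $S_M$ is a finite union of discrete lattice orbits and the main value formula applies directly — the averaging is over the Veech group rather than over a stratum.

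The main obstacle is establishing the pair-count upper bound with the correct $\eta$-dependence, i.e. that the number of close pairs in $S_M\cap B_R$ is $\ll_M \eta^2 R^2$ rather than merely $\ll_M R^2$ with an unspecified constant; an $\eta$-independent bound would be useless here. This is precisely where one needs the Siegel--Veech formula for pairs of orbits rather than a soft covering or volume argument: one must show that the main term in the mean value formula, applied to the thin "near-diagonal" region, scales like its Lebesgue measure, and that the error term (coming from the spectral/mixing input for nonarithmetic surfaces, via \cite{AD}) does not overwhelm it in the regime where $\eta$ is small but fixed and $R\to\infty$. A secondary subtlety is handling the diagonal $\bx=\by$ and the transition from "almost every $A$" in \cref{Thm:1} to a statement about the fixed set $S_M$ itself; the cleanest route is to use the pair-counting result directly (which holds for $S_M$ with no genericity caveat) and use \cref{Thm:1} only if one wants the sharper error term in the denominator, which is not needed for a $\limsup$ statement.
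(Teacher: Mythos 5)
Your proposal is correct and follows essentially the same route as the paper: the theorem is deduced immediately from the pair-counting bound of \cref{thm:ballcount} (each $\bx$ with a close neighbor contributes at least one ordered pair counted there), and that bound is itself proved exactly as you describe, by applying the mean value formula for pairs to the indicator of the thin near-diagonal region and observing that the main term scales like $|B_\eta|\asymp\eta^2$ while the passage from an average over $A$ to the fixed surface is handled by fattening the test set over a neighborhood of the identity. Your closing remarks correctly identify the two real points — the $\eta$-dependence of the constant and the need for a bound at $A=I$ rather than for almost every $A$ — both of which the paper's Section~\ref{sec:friendsbounds} addresses in the same way.
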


The theorem shows that upon discarding an $\eps$-dense set of saddle connections, $S_{M}$ is $\eta$-uniformly discrete for some $\eta>0$.

\subsubsection{Disjointness of flows}

The translation flow $F_\theta^t$ is uniquely ergodic \cite{KMS} for Lebesgue almost every direction $\theta$. On a {\em typical} translation surface (with respect to the Lebesgue measure class on a given stratum) of genus $g\geq2$, the flow $F_\theta^t$ is weakly mixing for almost every $\theta$ \cite{AF} (but never strongly mixing \cite{Kat}). For a typical surface the flows $F_\theta^t$ and $F_\psi^t$ are disjoint (for almost every pair $(\theta,\psi)$ of directions) \cite{CH}. An immediate consequence of disjointness is that $F^t_\theta$ and $F_\psi^t$ are not isomorphic.

Regarding individual surfaces, the work of \cite{CH} can be applied to establish disjointness for branched covers of the torus in typical directions. 
So far, covers of tori were the only examples of individual surfaces for which disjointness is shown. 
In \cref{AppJon}, the third author presents a direct proof of the following theorem based on \cref{thm:epsM}.
\begin{thm}\label{thm:disjoint}
Let $M$ be any Veech surface. Then for Lebesgue almost every pair $(\theta,\psi)\in S^1\times S^1$, the translations flows $F_\theta^t$ and $F_\psi^t$ are disjoint. 
\end{thm}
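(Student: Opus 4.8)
The plan is to deduce disjointness of the flows $F_\theta^t$ and $F_\psi^t$ from the quantitative failure of uniform discreteness established in \cref{thm:epsM}. Recall that two measure-preserving flows are disjoint (in the sense of Furstenberg) if the only joining is the product measure; since $F_\theta^t$ and $F_\psi^t$ are both uniquely ergodic for a.e.\ direction by \cite{KMS}, it suffices to rule out nontrivial joinings for a.e.\ pair $(\theta,\psi)$. The strategy is standard in the subject (compare \cite{CH}): a nontrivial ergodic joining $\mu$ of $F_\theta^t$ on $M$ with $F_\psi^t$ on $M$ would have to be supported on the graph of an isomorphism, or more generally would give positive measure to a thin neighborhood of a ``diagonal'' set; one then uses the genericity of $(\theta,\psi)$ and the equidistribution of long orbit segments to translate this into a statement about many saddle connections of $M$ pointing in nearly the same direction and having nearly the same length --- i.e.\ a cluster of points of $S_M$ inside a small ball.

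More concretely, I would argue as follows. First, fix $\eps>0$ and let $\eta=\eta(\eps)>0$ be the radius furnished by \cref{thm:epsM}, so that the set of $\bx\in S_M\cap B_R$ admitting a second point of $S_M$ within distance $\eta$ has upper density less than $\eps$ in $S_M$. Second, suppose for contradiction that $F_\theta^t$ and $F_\psi^t$ admit a nontrivial joining for a positive-measure set of pairs $(\theta,\psi)$. Using the ergodic decomposition and Fubini, reduce to a positive-measure set of pairs for which there is a nontrivial \emph{ergodic} joining; by unique ergodicity of each factor and the structure theory of joinings of translation flows on Veech surfaces, such a joining forces an approximate linear relation between the two flows along a density-one set of times, which after unwinding gives, for the relevant directions $\theta,\psi$, an abundance of saddle connections whose holonomy vectors lie within $\eta$ of one another --- that is, the ``bad'' set in \cref{thm:epsM} must have density bounded below by some $c>0$ independent of $\eta$. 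Third, choose $\eps<c$ at the outset; this contradicts \cref{thm:epsM}. Hence the joining must be trivial for a.e.\ $(\theta,\psi)$, and together with unique ergodicity this yields disjointness. Since the appendix by the third author supplies the details, here I only indicate the skeleton: \cref{thm:epsM} is invoked precisely to bound the measure of the set where orbits of the two flows stay close, and this is exactly what obstructs a nontrivial joining.

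The main obstacle --- and the crux of the appendix --- is the passage from ``$\mu$ is a nontrivial joining'' to ``$S_M$ has a positive-density cluster set at scale $\eta$, uniformly in $\eta$.'' This requires a careful renormalization argument: one rescales short saddle connections in direction $\theta$ using the $\SL_2(\R)$-action on the Veech surface, tracks how the joining behaves under this renormalization, and shows that the $F_\psi^t$ orbit shadows the $F_\theta^t$ orbit only if many holonomy vectors coincide up to error $\eta$. Controlling the uniformity in $\eta$ (so that a single choice of $\eps$ below a universal $c$ suffices) is the delicate point; it uses the fact that the bound in \cref{thm:epsM} is a genuine density statement valid for every $\eta>0$ with the same $\eps$, not merely an asymptotic-in-$\eta$ estimate. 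I would also need the weak mixing of $F_\theta^t$ for nonarithmetic Veech surfaces \cite{AD} to handle the ergodic-theoretic input in the nonarithmetic case, while the arithmetic case is easier and follows from \cref{prop:arithuni} together with the classical theory of covers of the torus as in \cite{CH}.
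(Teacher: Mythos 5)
There is a genuine gap at the heart of your argument. Your plan runs by contradiction: a nontrivial joining of $F_\theta^t$ and $F_\psi^t$ is supposed to ``force an approximate linear relation between the two flows along a density-one set of times,'' which you then unwind into a set of $\eta$-close pairs in $S_M$ of density at least some $c>0$ uniformly in $\eta$, contradicting \cref{thm:epsM}. No justification is given for this implication, and none of the ingredients you cite supplies it: unique ergodicity of the factors places essentially no restriction on their joinings, a nontrivial ergodic joining need not be (close to) a graph joining, and there is no off-the-shelf ``structure theory of joinings of translation flows on Veech surfaces'' that converts the existence of a joining into orbit shadowing, let alone into a cluster of holonomy vectors at every scale $\eta$ with a uniform lower density $c$. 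You acknowledge that this passage is ``the crux'' and defer it to an unspecified ``renormalization argument,'' but that is precisely the missing proof; as written the contradiction never gets off the ground.

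The paper's proof uses \cref{thm:epsM} in the opposite direction and avoids any structural analysis of joinings. The chain is: (i) by weak mixing (\cite{AD} in the nonarithmetic case) one finds, for a.e.\ $\theta$, a density-one set $L\subset\R$ which is a union of intervals of length at least $1$ and is a mixing set for $F_\theta^t$; (ii) \cref{thm:epsM} enters only through \cref{lem:basic geom}, where the $\eta$-separation of all but an $\eps$-proportion of holonomies shows that cylinder circumferences cannot concentrate on the sparse complement $L^c$, so a density-one set of circumferences of large-area cylinders lies in $L$; (iii) Vorobets' theorem (\cref{thm:vor}, \cite{Vor}) then yields, for a.e.\ $\psi$, a partial rigidity sequence for $F_\psi^t$ with times in $L$; (iv) the Markov-operator lemma (\cref{lem:disjoint crit}) says that a single sequence which is mixing for one flow and partially rigid for the other forces the trivial joining. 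The theorem is thus proved constructively, by exhibiting such a sequence for a.e.\ pair, rather than by deriving clustering from a hypothetical joining. To salvage your outline you would need to replace the unproved ``joining $\Rightarrow$ clustering'' step with some version of steps (ii)--(iv).
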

 
This provides the first family of surfaces other than branched covers of tori for which the flows in almost every pair of directions are not isomorphic. If \cref{thm:disjoint} could be extended to all surfaces, we would even be able to recover the result of Chaika and Forni that there is a weakly mixing billiard in a polygon \cite{CF}.

\subsubsection{Counting pairs with bounded determinant}

\cref{thm:epsM} follows from the following stronger density theorem that also accounts for multiplicities. Here we denote $B_\eta^*(\bx) = B_\eta(\bx)\setminus \{\bx\}$.

\begin{thm}\label{thm:ballcount}
Let $M$ be a Veech surface. There exists a constant $C$ such that
\[
\frac{|\{(\bx,\by)\in S_{M}\times S_{M}: \bx\in B_{R},\, \by\in B^*_{\eta}(\bx)\}|}{|S_M\cap B_R|} < C\eta^2.
\] 
\end{thm}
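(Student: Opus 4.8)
The plan is to deduce this statement from the main mean value formula for the Siegel--Veech transform of pairs of discrete lattice orbits announced in \cref{sec:MVT}. Write $S_M$ as a finite disjoint union $\bigsqcup_j \G_M \bv_j$ of discrete $\G_M$-orbits, so that the quantity to be bounded is a finite sum of counts of pairs $(\bx,\by)\in \G_M\bv_i \times \G_M\bv_j$ with $\bx\in B_R$ and $\by\in B^*_\eta(\bx)$. The natural test function to feed into the pair integral formula is $f_\eta(\bx,\by) = \one_{B_R}(\bx)\,\one_{B^*_\eta(\bx)}(\by)$, or rather a smoothed version of it; by the $\SL_2(\R)$-invariance built into the formula, after the change of variables $\by \mapsto \bx + \by$ the relevant integral becomes (up to the covolume constants) essentially $\int_{\R^2} \int_{B_\eta^*(\bo)} \,d\by\,d\bx$ restricted to the support dictated by discreteness, i.e.\ it scales like $R^2 \cdot \eta^2$ times a bounded density factor. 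Dividing by $|S_M\cap B_R| \sim c_M R^2$ and letting $R\to\infty$, the lim sup is bounded by $C\eta^2$ for an explicit constant $C$ depending only on $M$; choosing $\eta$ small enough that $C\eta^2 < \eps$ finishes the argument.

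In more detail, the key steps are: (i) invoke the pair mean value formula of \cref{sec:MVT} to express $\sum_{\bx\in\Lambda_1}\sum_{\by\in\Lambda_2} f(\bx,\by)$ as a main term given by an explicit integral over $\R^2\times\R^2$ (or over a homogeneous space) plus, crucially, a \emph{diagonal} or \emph{coincidence} contribution coming from pairs with $\bx$ and $\by$ in the same orbit and $\G_M$-related; (ii) for the off-diagonal part, bound the integral of $f_\eta$ directly, getting $O(\eta^2 R^2)$; (iii) handle the diagonal part, where $\by$ ranges over the finitely many $\G_M$-translates of $\bx$ landing in $B^*_\eta(\bx)$ — for $\eta$ smaller than the systole-type constant of each individual orbit this diagonal contribution is controlled by the one-variable count $|S_M\cap B_R| = c_M R^2 + O(R^{2-\delta})$ times a factor that can be made small, or is simply absorbed; (iv) assemble: divide by $|S_M\cap B_R|$, pass to the lim sup, and pick $\eta=\eta(\eps)$. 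One should also smooth $\one_{B_R}$ and $\one_{B^*_\eta}$ by convolving with bump functions at scale $\ll \eta$ to land in the class of admissible test functions for the integral formula, at the cost of an error that is again $O(\eta\cdot R^2)$ and hence harmless after normalization.

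The main obstacle I anticipate is controlling the \emph{diagonal} contribution, i.e.\ pairs $(\bx,\by)$ with $\bx,\by$ in the same orbit $\G_M\bv_j$ and $\by$ very close to $\bx$. Unlike the generic off-diagonal term, this does not simply scale like $\eta^2$: it reflects exactly the failure of uniform discreteness established by Wu \cite{Wu}, so there genuinely are arbitrarily close such pairs. The point that makes the theorem work is that these near-coincidences, while present, are \emph{rare} in a density sense — one needs the pair integral formula (rather than a crude Minkowski-type count) precisely to see that their contribution to the numerator is $o(R^2)$ as $\eta\to 0$ uniformly in $R$. Making this uniformity precise — interchanging the roles of the $R\to\infty$ limit and the $\eta\to 0$ limit — is the technical heart, and is where the effective error term with its $\log^{3/2}(R)$ (coming ultimately from the spectral gap / weak mixing input of Avila--Delecroix \cite{AD} for nonarithmetic $M$, and from elementary geometry in the arithmetic case, cf.\ \cref{prop:arithuni}) will be needed to ensure the error does not overwhelm the $c_M\eta^2 R^2$ main term before $R$ is taken large.
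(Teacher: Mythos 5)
Your starting point --- applying the pair mean value formula to the indicator of $\{\bx\in B_R,\ \by\in B^*_\eta(\bx)\}$ and reading off a main term of size $\asymp \eta^2R^2$ --- is the right one, but there is a genuine gap at the last step: the mean value formula only computes the \emph{average} $\int_{G/\G}\Theta_{\Lambda_1,\Lambda_2;f}(g)\,d\mu(g)$ (or its cone version), whereas the theorem asks for the value $\Theta(I)$ at the single point $g=I$, i.e.\ the count for $S_M$ itself rather than for its generic $\SL_2(\R)$-deformation. Nothing in your write-up bridges this; ``dividing by $|S_M\cap B_R|$ and letting $R\to\infty$'' only bounds the average. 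The paper closes this gap with a fattening (well-roundedness) argument: for $A$ in a fixed Cartan neighborhood $\cU_\alpha$ of the identity one has $\Theta(I)\le\Theta_{\alpha}(A)$, where $\Theta_\alpha$ is the transform of the indicator of the enlarged set $\{\bx\in B_{Re^\alpha},\ \by\in B^*_{\eta e^\alpha}(\bx)\}$; integrating over the cone above $\cU_\alpha\G/\G$ and using positivity then gives $\Theta(I)\,\mu(\cU_\alpha\G/\G)\ll\int_C\Theta_\alpha(A)\det(A)^2\,dm(A)$, to which the mean value formula applies. This is the step the introduction refers to as ``the well-roundedness of Euclidean balls,'' and without it the argument does not yield the stated conclusion.

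Your anticipated ``technical heart'' --- the near-coincident pairs in the same orbit --- is not where the difficulty lies, and the mechanism you sketch for it is not the one that works. In the mean value formula the diagonal term consists only of the exact coincidences $\by=\pm\bx$, which vanish on the support of $f$ (since $\bx\notin B^*_\eta(\bx)$ and $\|-\bx-\bx\|>\eta$ for $\|\bx\|$ large); the genuinely close pairs with $\by\neq\bx$ enter through the term $\iint(\Phi(|\bx\wedge\by|)-c_\G)f(\bx,\by)\,d\bx\,d\by$, and the point is that $\Phi$ is bounded, with $\Phi(t)=c_\G+O(t^{-\delta})$ at infinity and $\Phi(t)=O(t)$ near $0$ (\cref{asymptot}), so that this term is $O((\eta R)^{2-\delta})=o(R^2)$ for each fixed $\eta$ by the computation of \cref{ex:eps}. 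No interchange of the $R\to\infty$ and $\eta\to0$ limits is needed, and neither the $\log^{3/2}(R)$ error of \cref{Thm:1} nor the Avila--Delecroix weak mixing input plays any role here (the latter is used only in the appendix). Finally, no smoothing is required: indicators of closed sets are upper semicontinuous and hence admissible test functions for \cref{Thm1}.
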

The same arguments also recover an upper bound on pairs of saddle connections with bounded determinant. For a vector $\bx\in \R^2$, set 
\[\mathcal{D}_{D,1}(\bx) = \{\by \in \R^2: |\by| \leq |\bx|\text{ and } |\bx \wedge \by| \leq D\}.\] 
 For a typical surface $M$, the second author with Athreya and Masur \cite{AFM22} showed that for $D>0$ there is a {\em non-explicit} constant $C_D>0$ so that
\[\lim_{R\to\infty} \frac{|\{(\bx, \by) \in S_M \times S_M: \bx \in B_R,\,\, \by \in \mathcal{D}_{D,1}(\bx)|}{R^2} = C_D.\]

\begin{thm} \label{thm:detM}
	Let $M$ be a Veech surface. For any $D>0$, there are constants $C_M$ and $c$ depending only on $M$ so that
\[
\limsup_{R\to\infty} \frac{|\{(\bx, \by) \in S_M \times S_M: \bx \in B_R,\,\, \by \in \mathcal{D}_{D,1}(\bx)\}|}{R^2} \leq C_M (D +  c).
\]

\end{thm}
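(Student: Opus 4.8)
The plan is to deduce the estimate from the pair mean value formula of \cref{sec:MVT} by the same thickening procedure used for \cref{thm:ballcount}; the point that makes the determinant version clean is that $\bx\wedge\by$ is an \emph{exact} invariant of the diagonal $G$-action, so only the norm constraints have to be relaxed. Write $S_M=\bigsqcup_i \Lambda_i$ with $\Lambda_i=\G_M v_i$ a discrete lattice orbit; since there are finitely many summands it suffices to bound, for each pair $(i,j)$, the quantity $N_{ij}(R)=\#\{(\bx,\by)\in\Lambda_i\times\Lambda_j:\bx\in B_R,\ \by\in\cD_{D,1}(\bx)\}$. Set $E_R=\{(\bx,\by):\abs{\bx}\le R,\ \abs{\by}\le R,\ \abs{\bx\wedge\by}\le D\}$, so that the set being counted lies in $E_R\cap(\Lambda_i\times\Lambda_j)$. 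Fix a small symmetric neighbourhood $U\ni e$ in $G$ of positive Haar measure that injects into $\G_M\bk G$, and let $C=\sup_{g\in U}\max(\norm g,\norm{g^{-1}})$. For $g\in U$ one has $\abs{g^{-1}\bx}\le C\abs{\bx}$ and $\abs{g^{-1}\bx\wedge g^{-1}\by}=\abs{\bx\wedge\by}$, hence $\one_{E_R}(\bx,\by)\le\one_{E_{CR}}(g^{-1}\bx,g^{-1}\by)$. Summing over $\Lambda_i\times\Lambda_j$ and averaging over $g\in U$ gives, with $\widehat f(g)=\sum_{\bx\in\Lambda_i}\sum_{\by\in\Lambda_j}f(g^{-1}\bx,g^{-1}\by)$ the pair Siegel--Veech transform,
$$
N_{ij}(R)\ \le\ \widehat{\one_{E_R}}(e)\ \le\ \frac{1}{\mathrm{vol}(U)}\int_{\G_M\bk G}\widehat{\one_{E_{CR}}}(g)\,dg .
$$

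Now feed $f=\one_{E_{CR}}$ into the mean value formula. Its leading term is a constant $c_{ij}$ depending only on $M$ times $\iint_{\R^2\times\R^2}\one_{E_{CR}}=\int_{B_{CR}}\abs{\{\by:\abs{\by}\le CR,\ \abs{\bx\wedge\by}\le D\}}\,d\bx$. Writing $\by$ in an orthonormal frame adapted to $\bx$, the inner set is contained in the intersection of the slab $\{|\by\wedge\widehat\bx|\,\abs{\bx}\le D\}$ with $B_{CR}$, of area at most $4DCR/\abs{\bx}$ (and at most $\pi(CR)^2$ near the origin); integrating over $B_{CR}$ yields $O_M(DR^2)$. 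This handles the genuinely transverse, ``almost parallel'' pairs.

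The remaining terms of the formula are the degenerate contributions coming from configurations $(\bx,\by)$ with $\by$ in a prescribed relative position to $\bx\in\Lambda_i$. They split into a \emph{collinear} part ($\bx\wedge\by=0$) and a \emph{positive-wedge} part ($\bx\wedge\by$ equal to one of countably many nonzero values $\omega$, each such term being carried by a single $G$-orbit $\{\bx\wedge\by=\omega\}$ and hence evaluated on $\one_{E_{CR}}$ by a weight times $\mathrm{Haar}\{g:\abs{g^{-1}v_i}\le CR,\ \abs{g^{-1}\by_0}\le CR\}=O(R^2)$, and vanishing unless $\abs\omega\le D$). For the collinear part, $\by=t\bx$ with $\bx\in\Lambda_i$, $\by\in\Lambda_j$; if two ratios $t,t'$ were both admissible then $v_i$ would be an eigenvector of the element $\gamma'\gamma^{-1}\in\G_M$ with eigenvalue $t'/t$, so $t'/t\neq\pm1$ would force the direction of $v_i$ to be a hyperbolic fixed point of $\G_M$ --- impossible, since it is a cusp (a parabolic fixed point and a hyperbolic fixed point of a discrete group cannot coincide). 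Hence only finitely many ratios $t$ occur, each contributing $O_M(R^2)$, and this part is \emph{independent of $D$}: this is the source of the additive constant $c$. For the positive-wedge part one sums the $O(R^2)$ terms against their weights subject to $0<\abs\omega\le D$; the weights decay in $\abs{\by_0}$, and estimating $\#\{\by\in\Lambda_j:\abs{v_i\wedge\by}\le D\}$ by the bowtie area shows the sum is $O_M(DR^2)$.

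Altogether $N_{ij}(R)\le A_M D R^2+B_M R^2+o(R^2)$ with $A_M,B_M$ depending only on $M$; summing over the finitely many $(i,j)$, dividing by $R^2$ and letting $R\to\infty$ gives $\limsup_{R\to\infty}R^{-2}\#\{(\bx,\by)\in S_M\times S_M:\bx\in B_R,\ \by\in\cD_{D,1}(\bx)\}\le C_M(D+c)$ with $C_M$ and $c=c(M)$ depending only on $M$. The main obstacle is the third paragraph: one must read off from the pair mean value formula exactly which translated configurations appear, verify that the collinear contributions are finitely many and $D$-independent, and check that the weights attached to the positive-wedge terms are summable against the determinant cutoff so their total stays within $O_M(DR^2)$. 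The thickening step and the leading-term area computation are routine.
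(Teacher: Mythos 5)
Your overall route is the same as the paper's: split $S_M$ into finitely many discrete $\G_M$-orbits, thicken the counting set over a small identity neighbourhood so that the count at $I$ is dominated by an average of the pair Siegel--Veech transform, feed the thickened indicator into the pair mean value formula, and estimate separately the transverse and the collinear contributions (the latter giving the $D$-independent constant $c$). This is exactly the proof of \cref{mainD} via \cref{main} and \cref{ex:det}; your observation that the wedge is an exact invariant of the thickening is correct and is implicitly what lets the paper relax only the norm constraint, to $\cD_{D,e^{2\alpha}}$. Your argument that only finitely many collinear ratios $t$ can occur (a cusp of a discrete group cannot simultaneously be a hyperbolic fixed point) is a correct variant of the paper's appeal to \cref{Miyake}, and it is indeed the source of the additive constant.

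The one step you do not carry out --- and you flag it yourself --- is the summability of the weights in the ``positive-wedge'' part, and there your phrase ``the weights decay'' is false as stated: in \cref{Thm1} the weight attached to the orbit $\{\bx\wedge\by=c\}$ is $\varphi_\fab(c)/|c|$, and $\varphi_\fab(c)$ \emph{grows}, on average linearly in $c$. What the linear-in-$D$ bound needs is $\sum_{0<|c|\le D}\varphi_\fab(c)/|c|\ll_M D$; the pointwise bound $\varphi_\fab(c)\ll c^2$ is not enough (it would only give $O(D^2)$). This is precisely what \cref{Good} supplies (equivalently, $\Phi_\fab(t)=c_\G+O(t^{-\delta})$ in \cref{asymptot}), followed by Abel summation --- this is how the paper packages the transverse pairs into the single estimate of \cref{ex:det}. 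Your ``bowtie area'' heuristic can in fact be made rigorous without the scattering matrix: by \cref{lemma2}, $\sum_{0<c\le D}\varphi_\fab(c)$ counts points of the discrete orbit $\gs_\fa^{-1}\G\gs_\fb\,\be_1$ in the triangle $\{0<a\le c\le D\}\subset B_{\sqrt2 D}$, so \cref{BNRW} gives the required $O(D^2)$ average bound. Finally, note that your bookkeeping double-counts the transverse pairs: the ``leading term proportional to $\iint f$'' does not appear in \cref{Thm1} at all --- it only emerges from the sum over nonzero determinants after replacing $\Phi_\fab$ by its limit $c_\G$ in Theorem~1.8$'$ --- so you should estimate either the explicit sum over $c$ or the $\Phi_\fab$-weighted integral, not both. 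Since both are $O_M(DR^2)$ this is harmless for an upper bound, but it should be cleaned up.
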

Note we have two terms in the upper bound. This comes from the fact that when $D$ is small, there are essentially only parallel pairs, and thus a constant multiple of the Siegel--Veech constant $c_\Gamma$ will be dominating (cf. \cite[Theorem~1.2]{AFM22}). However for $D$ large, we have an upper bound which is asymptotically linear in $D$.
\subsubsection{Pair correlations}

In view of these results it is natural to consider the two-dimensional pair correlation function
\[
R_2(B_s,S_M,R) := \frac{|\{(\bx,\by)\in S_M\times S_M: \bx\in B_R,\, \by\in B^*_{s/\sqrt{c_M}}(\bx)\}|}{|S_M\cap B_R|} \qquad (s>0)
\]
with $s>0$ and a rescaling determined by the mean spacing $\tfrac{|S_M\cap B_R|}{|B_R|}\sim c_M$. The pair correlation is said to be Poissonian if $R_2(B_s,S_M,R)\to |B_s|$ as $R\to\infty$. Poissonian expresses that we see the same asymptotic behavior for the pair correlation function as if we were looking at points in the plane generated by a two-dimensional Poisson process. 

Our last application shows the pair correlation function to be Poissonian on average. To make this precise we need some notation. Let $\mu$ denote the probability Haar measure on $G/\G_M$ and let $C$ be the cone 
\[
C = \{ A= \nu^{1/2}g : g\in G/\G_M,\, \det(A)=\nu\in(0,1]\}
\]
equipped with the probability measure given by
\[
\int_C f(A)\, dm(A) = \int_0^1 \int_{G/\G_M} f(\nu^{1/2}g)\, d\mu(g)\, d\nu.
\]

\begin{thm}\label{thm:pairco}
For each $s>0$ we have
\[
\lim_{R\to\infty} \int_C R_2(B_s,A(S_M),R) \det(A)\, dm(A) = |B_s|.
\]
\end{thm}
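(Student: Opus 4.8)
The plan is to deduce \cref{thm:pairco} from the main mean value formula announced in \cref{sec:MVT}, applied to the two copies of the discrete lattice orbit decomposition of $S_M$, combined with a dominated convergence argument over the cone $C$. First I would unfold: writing $S_M = \bigsqcup_i \Lambda_i$ as a finite union of discrete $\G_M$-orbits, the numerator of $R_2(B_s, A(S_M), R)$ is $\sum_{i,j}\sum_{\bx\in\Lambda_i}\sum_{\by\in\Lambda_j} f_R(A\bx, A\by)$, where $f_R(\bx,\by) = \one_{B_R}(\bx)\,\one_{B^*_{s/\sqrt{c_M}}(\bx)}(\by)$. The point of integrating $\det(A)$ against $dm(A)$ over $C$ is precisely that, by the definition of $m$ and the Siegel--Veech-type formula of \cref{sec:MVT}, the quantity $\int_C \det(A)\sum_{\bx\in\Lambda_i}\sum_{\by\in\Lambda_j} f_R(A\bx,A\by)\, dm(A)$ collapses to an explicit integral of $f_R$ against (a constant multiple of) Lebesgue measure on $\R^2\times\R^2$, with the residual $\int_0^1 \nu\,(\cdots)\,d\nu$ factor from the dilation part of the cone accounting for the $\det(A)$ weight.

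Next I would compute that Euclidean integral. After the change of variables $\by = \bx + \bv$ the constraint $\by\in B^*_{s/\sqrt{c_M}}(\bx)$ becomes $0<|\bv|<s/\sqrt{c_M}$, so the double integral factors as $\big(\int_{B_R} d\bx\big)\cdot\big(\int_{0<|\bv|<s/\sqrt{c_M}} d\bv\big) = |B_R|\cdot \pi (s/\sqrt{c_M})^2$. Dividing by $|S_M\cap B_R|$, which by \eqref{asymp} is $c_M R^2 + O(R^{2-\delta}) \sim c_M |B_R|/\pi$ (up to the normalization implicit in $c_M$), the $|B_R|$ cancels and the $c_M$ cancels the $1/c_M$, leaving exactly $\pi s^2 = |B_s|$. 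The only subtlety here is bookkeeping of the normalizing constants so that the mean-value formula's constant, $c_M$, and $|B_s|$ line up; this is routine once the formula from \cref{sec:MVT} is in hand.

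The genuine work is justifying the interchange of $\lim_{R\to\infty}$ and $\int_C \cdots\, dm(A)$. For each fixed $A$, $R_2(B_s, A(S_M), R)$ need not converge (indeed the whole point of the paper is that pointwise Poissonian behavior is not known), so one cannot argue pointwise in $A$. Instead I would integrate first: for fixed $R$, the mean value formula gives the exact value of $\int_C R_2(B_s, A(S_M), R)\det(A)\, dm(A)$ up to controlling the denominator $|S_M\cap B_R|$, which is deterministic and handled by \eqref{asymp}; then one takes $R\to\infty$ inside this already-integrated expression. The one place where care is needed is that $|S_M\cap B_R|$ depends on $M$ but not on $A$, so it pulls out of the integral cleanly — but if one prefers to normalize by $|A(S_M)\cap B_R|$ one must instead invoke \cref{Thm:1} (the effective count for a.e. $A$) together with a uniform integrability bound, and this is the step I expect to be the main obstacle: producing an $L^1(C)$-dominating function for $R_2(B_s, A(S_M),R)$ uniformly in $R$. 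Such a bound should follow from a second-moment (or just first-moment with the trivial pointwise bound $f_R \le \one_{B_R}\times\one_{B_R}$) estimate via the same Siegel--Veech machinery, since the cone measure has the $\int_0^1\nu\,d\nu$ decay that tames the small-determinant region. Assuming the cleaner normalization by the deterministic $|S_M\cap B_R|$, the interchange is immediate and the theorem follows.
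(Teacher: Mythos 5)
Your outline does match the paper's proof in its main lines: apply the mean value formula for pairs in the form \eqref{eq:split} to $f_R(\bx,\by)=\one_{B_R}(\bx)\one_{B^*_{s/\sqrt{c_M}}(\bx)}(\by)$, read off the main term $c_M^2\,|B_R|\,|B_{s/\sqrt{c_M}}|$, and control the off-diagonal correction through the $\Phi_\fab-c_\G$ estimate of \cref{ex:eps}. Those are exactly the two ingredients the paper invokes.

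The genuine gap is in the normalization, and it is not cosmetic. The formula the paper actually proves (\cref{thm-useful}, hence \eqref{eq:split}) computes $\int_C\Theta_{\Lambda,\Lambda;f}(A)\det(A)^2\,dm(A)$ --- note the \emph{square} --- whereas the statement carries only one factor of $\det(A)$. The missing power must be supplied by the denominator: $|A(S_M)\cap B_R|=|S_M\cap A^{-1}B_R|=c_M|B_R|\det(A)^{-1}(1+o(1))$ by \cref{BNRW} applied with $A=\nu^{1/2}g$, so that $\det(A)/|A(S_M)\cap B_R|$ is comparable to $\det(A)^2/(c_M|B_R|)$ and \eqref{eq:split} becomes applicable. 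Your proposed shortcut --- normalizing by the deterministic $|S_M\cap B_R|$ so that the denominator ``pulls out of the integral cleanly'' --- fails: the integrand is then $\det(A)\,\Theta_{f_R}(A)/|S_M\cap B_R|$, and on the slice $\det(A)=\nu$ the theta transform grows like $\nu^{-2}$ (the sets $A^{-1}B_R$ and $A^{-1}B_{s/\sqrt{c_M}}$ each have area $\asymp\nu^{-1}$, as the mean value formula applied to $f_R(\nu^{1/2}\cdot,\nu^{1/2}\cdot)$ confirms), so the cone integral behaves like $\int_0^1\nu\cdot\nu^{-2}\,d\nu=\infty$. The same issue makes your claim that the $\int_0^1\nu\,d\nu$ factor ``tames the small-determinant region'' incorrect: it is $\nu^2\,d\nu$, not $\nu\,d\nu$, that tames it. In short, the $A$-dependence of the denominator is precisely what renders the integral finite and makes the constant come out to $|B_s|$; the step you defer as ``the main obstacle'' (replacing $|A(S_M)\cap B_R|$ by $c_M|B_R|\det(A)^{-1}$ with enough uniformity in $A\in C$ to pass to the limit under the integral) is the one step that must actually be carried out and cannot be bypassed by switching to the deterministic count.
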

We will study the pair correlation function in follow-up work.

\subsection{A mean value formula for pairs}\label{sec:MVT}

 Let $\G$ be a (nonuniform) lattice in $G$ that contains $-I$ with discrete orbit $\Lambda$, let $B_c(\R^2)$ denote the space of Borel measurable bounded compactly supported functions on the Euclidean plane. 
 
 In analogy to Siegel's mean value formula in the geometry of numbers, Veech \cite{Veech98} introduced the Siegel--Veech transform
\[
f\in B_c(\R^2) \mapsto \Theta_{\Lambda; f}(g) := \sum_{\bx\in \Lambda} f(g\bx)
\]
and proved that $\Theta_{\Lambda;f}\in L^\infty(G/\G)$ with first moment 
%
\begin{align}\label{veech}
\int_{G/\G} \Theta_{\Lambda; f} (g) d\mu(g) = c_\Lambda \int_{\R^2} f(\bx)d\bx.
\end{align}

The constant $c_\Lambda$ is explicit. For each discrete lattice orbit $\Lambda$ there exists a preferred scaling of $\Lambda$ in its homothety class $\{\lambda\Lambda:\lambda\neq0\}$ for which $c_{\Lambda}=c_\G:=\tfrac{2}{\pi V}$, where $V$ is the hyperbolic covolume of $\G$. We say that $\Lambda$ is a scaled discrete orbit if its constant has this form. (The precise scaling $\lambda$ can be explicitly computed; see \cref{GJ}.)

Similarly for two (not necessarily distinct) discrete $\G$-orbits $\Lambda_1$, $\Lambda_2$, and any $f\in B_c(\R^2\times\R^2)$ we consider the Siegel--Veech theta-transform
\[
\Theta_{\Lambda_1,\Lambda_2; f}:G/\Gamma\to\R,\quad \Theta_{\Lambda_1,\Lambda_2; f} (g)= \sum_{\substack{\bx\in\Lambda_1\\\by\in\Lambda_2}} f(g\bx,g\by).
\]
To simplify our formulas, we assume that $\Lambda_1$, $\Lambda_2$ are scaled so that $c_{\Lambda_i}=c_\G$. The general statement can be recovered from the observation that if $\Lambda'_i=\lambda_i\Lambda_i$, then $\Theta_{\Lambda'_1,\Lambda'_2; f}=\Theta_{\Lambda_1,\Lambda_2;f\circ\lambda}$, with $f\circ\lambda(\bx,\by):=f(\lambda_1\bx,\lambda_2\by)$.

Let 
$
\cN=\{ \det(\bx\, |\,\by) : (\bx,\by)\in\Lambda_1\times\Lambda_2\}.
$ 
Two ordered pairs $(\bx_1,\by_1)$, $(\bx_2,\by_2)\in\Lambda_1\times\Lambda_2$ are considered equivalent if $(\bx_1,\by_1)=(\g\bx_2,\g\by_2)$ for some $\g\in\G$ and we set $\varphi(c)$ to be the number of equivalence classes of pairs $(\bx,\by)\in\Lambda_1\times\Lambda_2$ with $\det(\bx|\by)=c$. 
Our main theorem is

\begin{thm}\label{Thm1}
Let $\Lambda_1$, $\Lambda_2$ be scaled discrete $\G$-orbits, and let $f$ be a bounded semicontinuous function of compact support on $\R^2\times\R^2$. Then $\Theta_{\Lambda_1,\Lambda_2;f}\in L^\infty(G/\G)$ and 
\begin{align*}
\int_{G/\G} \Theta_{\Lambda_1,\Lambda_2;f}(g)\, d\mu(g) = c_{\G} \sum_{c\in\cN-0} & \frac{\varphi(c)}{|c|} \int_\R \int_{\R^2} f(\bx, t\bx+c\bx^*)\, d\bx\, dt \\
&\qquad +\delta_{\Lambda_1,\Lambda_2} c_{\G} \int_{\R^2} \op{f(\bx,-\bx)+f(\bx,\bx)} d\bx,
\end{align*}
where $\bx^*$ is such that $\det(\bx\, |\, \bx^*)=1$ and $\delta_{\Lambda_1,\Lambda_2}=1$ if $\Lambda_1$ and $\Lambda_2$ are homothetic and is 0 otherwise.
\end{thm}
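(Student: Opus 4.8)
The plan is to expand the double sum over $\Lambda_1\times\Lambda_2$ into diagonal $\G$-orbits and to unfold each orbit against the single-orbit Siegel--Veech formula \eqref{veech}. Write $\Lambda_i=\G\bv_i$ with $\bv_i$ representing a cusp, so that $H_i:=\mathrm{Stab}_\G(\bv_i)$ is infinite cyclic, generated by a primitive parabolic. The boundedness claim is immediate: if $\mathrm{supp}(f)\subset K_1\times K_2$ with $K_i$ compact and $\phi_i\in C_c(\R^2)$, $\phi_i\ge\one_{K_i}$, then $\abs{\Theta_{\Lambda_1,\Lambda_2;f}(g)}\le\norm{f}_\infty\,\Theta_{\Lambda_1;\phi_1}(g)\,\Theta_{\Lambda_2;\phi_2}(g)$, which lies in $L^\infty(G/\G)$ by Veech's bound for single orbits \cite{Veech98}. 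A routine monotone approximation then reduces the identity to $f\in C_c(\R^2\times\R^2)$, which I assume henceforth.

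Because $\SL_2(\R)$ acts transitively on the set of ordered pairs of vectors with a prescribed determinant, two pairs in $\Lambda_1\times\Lambda_2$ can be $\G$-equivalent only if they share the value $c=\det(\bx\,|\,\by)$, and $\varphi(c)$ is by definition the number of $\G$-orbits in that fiber; fixing the first coordinate to $\bv_1$ identifies these orbits with the $H_1$-orbits on $\Lambda_2\cap\{\by:\det(\bv_1\,|\,\by)=c\}$, a set on which $H_1\cong\Z$ acts cocompactly when $c\ne0$, so $\varphi(c)<\infty$. Unfolding gives
\[
\int_{G/\G}\Theta_{\Lambda_1,\Lambda_2;f}\,d\mu\;=\;\sum_{[(\bx_0,\by_0)]}\ \int_{G/\G_{(\bx_0,\by_0)}}f(g\bx_0,g\by_0)\,d\mu(g),\qquad \G_{(\bx_0,\by_0)}=\mathrm{Stab}_\G(\bx_0)\cap\mathrm{Stab}_\G(\by_0),
\]
the sum running over $\G$-orbit representatives; the interchange of sum and integral is legitimate because $f$ has compact support, so only finitely many orbits contribute --- here one uses the discreteness of $\cN$, which follows from the embeddedness of the cusp neighborhoods in $\G\bk\h$.

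Now I split according to whether $c=\det(\bx_0\,|\,\by_0)$ vanishes. If $c\ne0$, then $\bx_0$ and $\by_0$ are linearly independent, so $\mathrm{Stab}_G(\bx_0)\cap\mathrm{Stab}_G(\by_0)=\{I\}$ and the orbit integral is $\int_G f(g\bx_0,g\by_0)\,d\mu(g)$. The map $g\mapsto(g\bx_0,g\by_0)$ is a bijection of $G$ onto $\{(\bx,\by):\det(\bx\,|\,\by)=c\}$, which I coordinatize by choosing, for each $\bx\ne0$, a matrix $g_\bx\in\SL_2(\R)$ with $g_\bx e_1=\bx$, setting $\bx^*=g_\bx e_2$ (so $\det(\bx\,|\,\bx^*)=1$), and writing $\by=t\bx+c\bx^*$; in the factorization $g=g_\bx n_b$, $n_b=\bsm1&b\\0&1\esm$, Haar measure on $G$ is a fixed multiple of $d\bx\,db$, and the substitution $b\mapsto t$ introduces the Jacobian $\abs{c}^{-1}$. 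Hence each such orbit contributes $\tfrac{\kappa}{\abs{c}}\int_\R\int_{\R^2}f(\bx,t\bx+c\bx^*)\,d\bx\,dt$, a quantity depending on the orbit only through $c$ (by right-invariance of Haar), so summing over the $\varphi(c)$ orbits with determinant $c$ and then over $c\in\cN\setminus\{0\}$ produces the main term. The constant $\kappa$ equals $c_\G$: this is forced by applying the same measure computation to the single-orbit formula \eqref{veech}, the scaling normalization $c_{\Lambda_i}=c_\G$ being exactly the calibration that reduces the relevant cusp-width factor to $1$ (for $\SL_2(\Z)$ and the orbit of primitive vectors, the primitive parabolic is $n_1$).

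Finally suppose $c=0$, that is, $\by_0=t_0\bx_0$, so $\by_0\in\Lambda_2\cap\R\bx_0$. The key observation is that in a nonuniform lattice a cusp direction is never an eigendirection of a hyperbolic element --- otherwise a parabolic and a hyperbolic would share a fixed point on $\partial\h$ and generate a nondiscrete subgroup --- whence $\Lambda_i\cap\R\bx_0=\{\pm\bx_0\}$ whenever $\bx_0\in\Lambda_i$. Consequently, if $\Lambda_1$ and $\Lambda_2$ are not homothetic there are no degenerate pairs at all (a point of $\Lambda_2$ on $\R\bx_0$ would exhibit $\Lambda_2$ as a scalar multiple of $\Lambda_1$), so $\delta_{\Lambda_1,\Lambda_2}=0$; while if they are homothetic then, both being scaled, $\Lambda_1=\Lambda_2\eqdef\Lambda$, and the degenerate pairs fall into exactly two $\G$-orbits --- the diagonal $\{(\bx,\bx):\bx\in\Lambda\}$ and the antidiagonal $\{(\bx,-\bx):\bx\in\Lambda\}$ --- each with stabilizer $\mathrm{Stab}_\G(\bx_0)$. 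Unfolding these two orbits against \eqref{veech} with $h(\bx)=f(\bx,\bx)$ and $h(\bx)=f(\bx,-\bx)$ gives $c_\G\int_{\R^2}\bigl(f(\bx,\bx)+f(\bx,-\bx)\bigr)\,d\bx$, which is the $\delta_{\Lambda_1,\Lambda_2}$-term; adding this to the $c\ne0$ contribution completes the proof. I expect the main obstacle to be the constant bookkeeping in the $c\ne0$ case --- confirming that, under the scaling $c_{\Lambda_i}=c_\G$, the invariant measure on the determinant-$c$ surface really produces the clean factor $c_\G/\abs{c}$ rather than a cusp-width-dependent one --- together with pinning down that the $c=0$ contribution sits precisely on $\pm$ the diagonal.
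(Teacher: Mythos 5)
Your proof is correct, and it reaches the formula by a genuinely more direct route than the paper. The paper first invokes the Riesz--Markov--Kakutani representation theorem (its \cref{repthm}) to know that $f\mapsto\int_{G/\G}\Theta_{\Lambda_1,\Lambda_2;f}\,d\mu$ is integration against a $G$-invariant measure on $\R^2\times\R^2$, decomposes that measure over the $G$-orbits $G(\be_1,t\be_1)$ and $G(\be_1,c\be_2)$ with unknown coefficients, and then pins down each coefficient by testing characteristic functions of closed sets; only at that last stage does it unfold, using the double coset decomposition (\cref{DCD}, \cref{lemma2}) to see that the determinant-$c$ pairs split into exactly $\varphi(c)$ $\G$-orbits. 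You skip the invariant-measure classification entirely and unfold orbit by orbit from the start, which is cleaner and makes the role of the semicontinuity hypothesis (only needed for the monotone approximation from $C_c$) more transparent; the trade-off is that you must justify by hand the interchange of the orbit sum with the integral, and your one-line appeal to ``discreteness of $\cN$'' is the thinnest point of the argument --- what is really needed is that $\sum_{0<|c|\le T}\varphi(c)<\infty$ for each $T$, which your cocompactness argument gives for each fixed $c$ but for which the finiteness of $\cN\cap[-T,T]$ itself requires the horocycle/cusp-neighborhood argument the paper outsources to \cite[Proposition 2.8]{Iwa}. The remaining ingredients coincide with the paper's: the identification of the fiber over $c$ with $\varphi(c)$ orbits is its \cref{lemma2}; your Haar-measure factorization $d\eta=d\bx\,db$ and the Jacobian $|c|^{-1}$ reproduce the computation displayed at the start of \cref{sec:2ndmoment1}; calibrating $\kappa=c_\G$ against the single-orbit formula is exactly the paper's normalization of $\eta$; and your treatment of the collinear pairs (ruling out ratios $t\ne\pm1$ via the parabolic--hyperbolic incompatibility, where the paper uses \cref{Miyake}, then unfolding the diagonal and antidiagonal against Veech's \cref{Veech}) matches the determination of $b_{\pm1}=c_\G$.
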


The right-hand side provides a complete description of the arising Siegel--Veech measures, answering a question of Athreya, Cheung, and Masur \cite[Sections 4.4, 4.5.3]{ACM}. 

The range of admissible test functions $f$ includes characteristic functions of open and closed measurable sets. However the evaluation of such integrals is not straightforward.  Inspired by a trick used by Schmidt in the geometry of numbers, we rewrite the mean value formula above in terms of the cone measure $m$ introduced earlier.
\begin{thm'}\label{Thm2}
With the same notation as above, we have
\begin{align*}
\int_C \Theta_{\Lambda_1,\Lambda_2; f} (A) dm(A) &= c_\G^2 \iint_{\R^2\times\R^2} f(\bx,\by)\, d\bx d\by 
+ \delta_{\Lambda_1,\Lambda_2} c_{\Gamma} \int_{\R^2} \left(f(\bx,-\bx)+f(\bx,\bx)\right)d\bx\\
&\qquad + c_\G \iint_{\R^2\times\R^2} \Psi(|\bx\wedge\by|)f(\bx,\by)\, d\bx d\by,
\end{align*}
where $|\bx\wedge\by|=|\det(\bx\,|\,\by)|$, and 
$
\Psi(t) =  t\sum_{t\leq c\in\cN-0} \varphi(c) c^{-3} -c_\G$.
\end{thm'}

The evaluation of the last double integral therefore depends on counting determinants of pairs of linearly independent vectors in $\Lambda_1\times\Lambda_2$. Using the spectral theory of automorphic forms, we show that
\begin{thm}\label{thm:Scat}
As $t\to\infty$,
$ 
\Psi(t) = O\left(t^{-\delta}\right),
$
where $\delta$ is as in \cref{asymp}.  
\end{thm}

Specializing to $\Lambda_1=\Lambda_2=\Lambda$ and $f(\bx,\by)=h(\bx)h(\by)$, we recover the second moment formula (where $\Lambda$ does not need to be scaled)

\begin{coro}\label{coro:2ndmoment}
\begin{align*}
\int_C \left( \sum_{\bx\in\Lambda} h(A\bx)\right)^2 dm(A) &=  \op{c_\Lambda\int_{\R^2} h(\bx)d\bx}^2+ c_\Lambda \int_{\R^2} (h(\bx)h(-\bx) +h(\bx)h(\bx)) d\bx\\
&\qquad + c_\Lambda \iint_{\R^2\times\R^2}\op{\Psi(|\bx\wedge \by|)}   h(\bx)h(\by)d\bx\, d\by.
\end{align*}
\end{coro}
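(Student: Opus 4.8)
The plan is to read the corollary off the cone-measure formula (Theorem~1.8$'$) by specializing $f$; the content is a cosmetic rearrangement, so I would not expect a genuine obstacle. First I would record the elementary factorization of the theta transform: for $f(\bx,\by)=h(\bx)h(\by)$ and $\Lambda_1=\Lambda_2=\Lambda$, the definition of $\Theta_{\Lambda_1,\Lambda_2;f}$ gives at once
\[
\Theta_{\Lambda,\Lambda;f}(A)=\sum_{\bx\in\Lambda}\sum_{\by\in\Lambda}h(A\bx)\,h(A\by)=\Big(\sum_{\bx\in\Lambda}h(A\bx)\Big)^{2},
\]
the interchange of the two sums being harmless because $h$ is bounded with compact support, so each sum is finite for a fixed $A$, and $\Theta_{\Lambda;h}\in L^{\infty}$ by \eqref{veech}; as $m$ is a probability measure, $\Theta_{\Lambda;h}^{2}$ is then integrable on $C$.

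Next I would treat the case of a scaled orbit $\Lambda$, so that $c_\Lambda=c_\G$. Applying Theorem~1.8$'$ with $\Lambda_1=\Lambda_2=\Lambda$ — where $\delta_{\Lambda_1,\Lambda_2}=1$, since $\Lambda$ is trivially homothetic to itself — the right-hand side becomes
\[
c_\G\iint_{\R^{2}\times\R^{2}}\Phi(|\bx\wedge\by|)\,h(\bx)h(\by)\,d\bx\,d\by+c_\G\int_{\R^{2}}\big(h(\bx)h(-\bx)+h(\bx)h(\bx)\big)\,d\bx .
\]
The second term already matches the form in the corollary; it isolates exactly the degenerate pairs $(\bx,\pm\bx)$ of vanishing determinant, which is the reason they are excluded from $\cN-0$. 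In the first term I would add and subtract $c_\G^{2}\iint_{\R^{2}\times\R^{2}}h(\bx)h(\by)\,d\bx\,d\by$ and apply Fubini, $\iint_{\R^{2}\times\R^{2}}h(\bx)h(\by)\,d\bx\,d\by=\big(\int_{\R^{2}}h\big)^{2}$, to rewrite it as
\[
\Big(c_\G\int_{\R^{2}}h\Big)^{2}+c_\G\iint_{\R^{2}\times\R^{2}}\big(\Phi(|\bx\wedge\by|)-c_\G\big)h(\bx)h(\by)\,d\bx\,d\by .
\]
Collecting the three pieces and recalling $c_\Lambda=c_\G$ gives the asserted identity in the scaled case.

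It remains to lift the scaling hypothesis. Here I would invoke the homothety remark recorded just before \cref{Thm1}: if $\Lambda=\lambda\Lambda_0$ with $\Lambda_0$ scaled, then $\Theta_{\Lambda,\Lambda;f}=\Theta_{\Lambda_0,\Lambda_0;f\circ\lambda}$ with $(f\circ\lambda)(\bx,\by)=h(\lambda\bx)h(\lambda\by)$, so the computation above applies to $\Lambda_0$ and the test function $\bx\mapsto h(\lambda\bx)$; undoing the substitution and using $c_\Lambda=\lambda^{-2}c_\G$ (read off from \eqref{veech}) turns every $c_\G$ into $c_\Lambda$ and $\Phi$ into the arithmetic function attached to $\Lambda$. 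Tracking how the constant $c_\Lambda$ and the function $\Phi$ transform under this rescaling is the single step that demands a little care; the rest is one change of variables. Lastly, for $h$ of variable sign one writes $h=h^{+}-h^{-}$ and applies Theorem~1.8$'$, which is linear in $f$, to each of the four products $h^{\pm}(\bx)h^{\pm}(\by)$ — each a nonnegative product of semicontinuous functions, hence admissible — and reassembles the resulting transforms into $\big(\sum_{\bx\in\Lambda}h(A\bx)\big)^{2}$.
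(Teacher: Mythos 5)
Your proposal is correct and follows exactly the route the paper intends: the corollary is obtained by specializing Theorem~1.8$'$ to $\Lambda_1=\Lambda_2=\Lambda$ and $f(\bx,\by)=h(\bx)h(\by)$, factoring the double sum into $\bigl(\sum_{\bx}h(A\bx)\bigr)^2$, and adding and subtracting $c_\G^2\iint h(\bx)h(\by)\,d\bx\,d\by$ — precisely the rearrangement the paper records again as \eqref{eq:split}. Your extra care about the unscaled case and the sign decomposition goes beyond what the paper writes down (it states the corollary as an immediate specialization), and your flagged caveat about how $\Phi$ must be renormalized under homothety is indeed the one point that needs attention, but it does not change the argument.
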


%
%

\subsection{Discussion of proofs and structure of the paper}\label{sec:disc}
Section~\ref{sec:Defintions} collects notation and initial definitions. In \cref{sec:discrete} we show that nonuniform lattices yield, up to homothety, only finitely many discrete orbits, in one-to-one correspondence to the set of inequivalent cusps for $\G$.

In Section~\ref{sec:prevcount} we show that counting problems for discrete lattice orbits can be restricted to scaled discrete orbits. We also recall the state of the art for counting results of discrete lattice orbits, and include a full proof of 
\[
N_R(\Lambda) = c_\Lambda |B_R| + o(|B_R|^{1/2})
\]
when $\Lambda$ is a discrete orbit of a congruence subgroup of $\SL_2(\Z)$; see \cref{thm:cong}. (A stronger power-saving can be achieved if we assume the Riemann hypothesis.)

In Section~\ref{sec:theta} we give a representation theorem for all higher moments of the Siegel--Veech transform over the space of semi-continuous functions.  Section~\ref{sec:Veech} reviews Veech's proof of (\ref{veech}), filling out some details left to the reader in \cite{Veech98} that help explain our restriction from measurable to semi-continuous functions when working with higher moments. 

Concluding the background part of this paper, we give in Section~\ref{sec:Admissible} a direct proof of \cref{thm:Scat} (known to follow from a more general result of Good \cite{Good}) based on the meromorphic continuation of the scattering matrix.

Our main \cref{Thm1} (mean value formulas for pairs) and its \cref{coro:2ndmoment} (second moment formulas) are proved in 
Section~\ref{sec:2ndmoment1}. The strategy of proof follows Veech's ergodic approach \cite{Veech98} and its extension by the second author to the second moment of the Siegel--Veech transform for Hecke triangle groups \cite[Theorem 1.2]{Fai21}. 
The key realization is that the geometric totient function introduced in \cite{Fai21} is a special instance of the counting function for double cosets of lattices with respect to maximal parabolic subgroups.

We are now in position to deduce the applications advertised above. \cref{sec:estimates} collects needed geometric estimates (using the spectral asymptotic provided by \cref{thm:Scat}) and concludes with the proof of \cref{thm:pairco} (Poissonian pair correlation on average).

\cref{Thm:1} (effective counting) follows directly from \cref{Thm:1DLO}, which is itself the application of the more general \cref{countN} to dilated Borel sets containing the origin. The latter theorem should be seen as the discrete lattice orbit analogue of Schmidt's bound on the discrepancy function for (primitive) lattices in $\R^2$ \cite[Theorem 2]{Schmidt}. In fact our proof mimicks Schmidt's with input the second moment formula from \cref{coro:2ndmoment} and a geometric estimate from \cref{sec:estimates}. This argument of Schmidt has recently been revisited in various contexts; see \cite{AM09}, \cite{KY21}, \cite{RSW}.

In Section~\ref{sec:friendsbounds}, \cref{thm:ballcount} and \cref{thm:detM} (counting pairs in balls or of bounded determinant) are proved using the full force of the mean value formula for pairs  and the well-roundedness of Euclidean balls. The paper concludes with the proof of Theorem~\ref{thm:disjoint} (disjointness of flows) in Appendix~\ref{AppJon}, building on a criterium for disjointness developed in \cite{CH}.

\section{Basic definitions}\label{sec:Defintions}

\subsection{Notation}\label{sec:Notation}
Given a countable set $A$, we denote by $|A|$ the number of its elements. If $A$ is a Borel set in $\R^2$, we denote its characteristic function by $\one_A$ and its Lebesgue measure by $|A|$ or $\lambda(A)$. We use $\|\cdot\|$ to denote the Euclidean norm $\|\bx\|=\sqrt{\sum x_i^2}$. For the disk  $B_R(\bx) = \{ \by\in\R^2 : \|\bx-\by\|\leq R\}$, we have $|B_R(\bx)|=\pi R^2$ and write $B_R:=B_R(\bo)$.

For two functions $f(x)$ and $g(x)$, we write $f\ll g$ or $f=O(g)$ to say that there exists a constant $C>0$ such that for $x$ sufficiently large, $|f(x)|\leq C|g(x)|$.

We set $G=\SL_2(\R)$ and distinguish the subgroups
\[
K= \left\{ k_\theta =\bpm \cos\theta&\sin\theta\\ -\sin\theta&\cos\theta\epm : \theta\in\R/2\pi\Z\right\},
\]
\[
A = \left\{ a_y = \bpm y &0\\ 0&y^{-1}\epm: y>0\right\},
\]
\[
N = \left\{ n_x = \bpm 1 & x\\ 0&1\epm: x\in\R\right\}.
\]
The Haar measure $\eta$ on $G$ can be written in coordinates as the Lebesgue measure restricted to the set 
\begin{align}\label{mathscr-S}
\mathscr{S}=\{(a,b,s): a\neq0, b,s\in\R\}
\end{align}
with the coordinate transformation 
\[
(a,b,s)\mapsto g(a,b,s)= \bpm 1&0\\ s&1\epm \bpm a&b\\ 0 & a^{-1}\epm =\bpm a& b\\ as & bs+a^{-1}\epm.
\]


Let $\Gamma<G$ be a discrete subgroup, with respect to the topology induced from $G\subset\R^4$. We will assume that $\Gamma<G$ is a lattice, that is, the quotient $G/\Gamma$ carries a finite $G$-invariant Borel measure, which when normalized to a probability measure we denote by $\mu$. The image of $\G$ in $\PSL_2(\R)$ via the canonical projection is a Fuchsian group, which we also denote by $\G$.

The action of $\Gamma$ by fractional linear transformation on the hyperbolic plane $\h^2$, i.e., $\bsm a&b\\ c&d\esm.z=\tfrac{az+b}{cz+d}$ is properly discontinuous and admits a finite-area fundamental domain in $\h^2$ whose (hyperbolic) area we denote by $V={\rm area}(\G\bk\h^2)$. The area of a geometrically finite Fuchsian group is a numerical invariant that does not depend on the particular choice of fundamental domain; we refer the reader to \cite{Katok} for more properties of Fuchsian groups.


\subsection{Scaling transformations}\label{sec:scaling}
A lattice $\Gamma<G$ is nonuniform if and only if the action of $\Gamma$ on $\h^2$ has parabolic fixed points, called cusps. We say that two cusps $\fa$ and $\fb$ are equivalent if their $\G$-orbits coincide, i.e., if $\G.\fa=\G.\fb$. Since $\G$ is a lattice, there are at most finitely many inequivalent cusps. For each cusp $\fa$, we set $\G_\fa=\{\g\in \G: \g.\fa=\fa\}$ and note that if $\fa$ and $\fb$ are equivalent cusps with $\fb =\g.\fa$, then $\G_\fb=\g\G_\fa \g^{-1}$. 

Computations are most convenient at the cusp at $\infty$; up to conjugation by some $\gs_\fa\in G$ such that $\gs_\fa.\infty=\fa$, we may assume that $\G$ has a cusp at $\infty$ (up to replacing $\G$ by $\gs_\fa^{-1}\G\gs_\fa$). If $-I\not\in\G$, then $\G_\infty$ is isomorphic to $\Z$ and generated by a matrix of the form $\pm \bsm 1&\omega\\0&1\esm$, where we call $\omega>0$ the {\bf cusp width}, while if $-I\in\G$, $\G_\infty$ is isomorphic to $\Z/2\Z\times\Z$. The image of $\G_\infty$ in $\PSL_2(\R)$ is always isomorphic to $\Z$.  The following criterion is well known; see \cite[Lemma 1.7.3.]{Miyake}.

\begin{lm}\label{Miyake}
Let $\g=\bsm a&b\\ c&d\esm\in \G$. If $|c|\omega<1$, then $\g\in\G_\infty$.
\end{lm}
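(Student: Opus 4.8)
The plan is to prove the contrapositive: if $\g=\bsm a&b\\c&d\esm\in\G$ does not lie in $\G_\infty$, then $\omega|c|\geq 1$. Since $\G_\infty$ consists precisely of the elements of $\G$ fixing $\infty$, i.e.\ those whose lower-left entry vanishes, the hypothesis $\g\notin\G_\infty$ is equivalent to $c\neq0$; so it suffices to show that $c\neq0$ forces $\omega|c|\geq1$. This is a version of Shimizu's lemma, and I would run the classical iterated-conjugation argument. Let $P=\bsm 1&\omega\\0&1\esm$, so that $P$ (or $-P$, if $-I\notin\G$) generates $\G_\infty$ modulo $\pm I$; passing to the image of $\G$ in $\PSL_2(\R)$ — which is discrete and contains the class of $P$ — removes this sign ambiguity, and since the quantity $|c_n|$ appearing below is a well-defined function of the class, I will simply work with $P$.

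Set $g_0=\g$ and $g_{n+1}=g_nPg_n^{-1}\in\G$, and write $g_n=\bsm a_n&b_n\\c_n&d_n\esm$. A direct computation using $\det g_n=1$ gives the recursion
$$
a_{n+1}=1-\omega a_nc_n,\qquad b_{n+1}=\omega a_n^2,\qquad c_{n+1}=-\omega c_n^2,\qquad d_{n+1}=1+\omega a_nc_n .
$$
In particular, writing $t_n=\omega|c_n|$ we obtain $t_{n+1}=t_n^2$, hence $t_n=t_0^{2^n}$. If $\omega|c|=t_0<1$, then $t_n\to0$ super-exponentially, and since $0<t_{n+1}<t_n$ the $c_n$ are nonzero and pairwise distinct; consequently the $g_n$ are pairwise distinct elements of $\G$.

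It then remains to check that $(g_n)_n$ stays in a bounded region of $G$: once this is established, $(g_n)$ has an accumulation point in $G$ (indeed $g_n\to P$, as $c_n\to0$, $a_n,d_n\to1$, $b_n\to\omega$), contradicting the discreteness of $\G$ and finishing the proof. Boundedness follows by bootstrapping from the recursion: from $|a_{n+1}|\leq1+|a_n|t_n$ and $\sum_n t_n<\infty$ one gets, via an elementary product estimate, a uniform bound $|a_n|\leq A$; then $|b_n|\leq\omega A^2$, $|d_n|\leq1+At_0$, and $|c_n|\leq|c|$. The only point requiring genuine care is precisely this control of the diagonal and off-diagonal entries along the orbit (together with the minor $\pm I$ bookkeeping in the definition of $\G_\infty$); the rest is the short matrix computation above, so I expect the entry-boundedness step to be the sole real obstacle, and it is dispatched by the product/exponential estimate just indicated.
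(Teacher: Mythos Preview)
Your proof is correct. The paper itself does not supply a proof of this lemma; it merely cites \cite[Lemma 1.7.3]{Miyake} as a reference for this well-known criterion. What you have written is precisely the classical Shimizu--Leutbecher argument: iterate conjugation by the parabolic generator, observe that the lower-left entries satisfy $t_{n+1}=t_n^2$ and hence decay doubly exponentially when $t_0=\omega|c|<1$, and then use boundedness of the remaining entries to produce infinitely many distinct elements of $\G$ accumulating at $\pm P$, contradicting discreteness.

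Your handling of the two minor subtleties is fine. For the sign issue when $-I\notin\G$ and only $-P\in\G$, either your passage to $\PSL_2(\R)$ works, or one can simply run the recursion with $-P$ in place of $P$: the absolute values $|c_n|$ obey the identical recursion, and the entry bounds go through unchanged. For the boundedness of $|a_n|$, the inequality $|a_{n+1}|\leq 1+t_n|a_n|$ together with $t_n\leq t_0<1$ gives by induction $|a_n|\leq\max\bigl(|a_0|,\tfrac{1}{1-t_0}\bigr)$, which is perhaps cleaner than invoking a product estimate, but either way the conclusion is immediate.
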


Let $\fa$ be a cusp for $\G$ and choose $\gs_\fa\in  G$ such that 
\[
\gs_\fa.\infty =\fa\quad \text{ and }\quad \sigma_\fa^{-1}\G_\fa\sigma_\fa =  \bbm 1 & \Z\\ 0 &1\ebm,
\]
that is, we fix a cusp at $\infty$ whose width $\omega$ is scaled to 1. We call $\gs_\fa$ a {\bf scaling transformation}. Notice that the above conditions only determine $\gs_\fa$ up to right translation by elements $n\in N$. One has the following convenient double coset decomposition, which can be seen as a local expression of the Bruhat decomposition; see \cite[Theorem 2.7]{Iwa}.

\begin{thm}\label{DCD}
Let $\fa$, $\fb$ be cusps for $\G<\PSL_2(\R)$. We have a disjoint union
\begin{align}\label{dcd}
\sigma_\fa^{-1}\Gamma\sigma_\fb = \delta_\fab \bbm 1 & \Z\\ 0 &1\ebm \cup \bigcup_{\substack{0< a\leq c}} \bbm 1 & \Z\\ 0&1\ebm \bbm a &*\\ c&*\ebm \bbm 1 & \Z\\ 0 & 1\ebm,
\end{align}
where $\delta_\fab=1$ if $\fa$, $\fb$ are $\G$-equivalent and $\delta_\fab=0$ otherwise, and $a$, $c$ run over real numbers such that $\gs_\fa^{-1}\G\gs_\fb$ contains $\bsm a&*\\ c&*\esm$.
\end{thm}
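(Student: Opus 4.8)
The plan is to reduce everything to the cusp at $\infty$ and then run the classical Bruhat-type argument on $\SL_2$, keeping careful track of the parabolic stabilizers on both sides. First I would normalize: replacing $\G$ by $\gs_\fa^{-1}\G\gs_\fb$ does not quite make sense since this is not a group, so instead I would work with $\Delta := \gs_\fa^{-1}\G\gs_\fb \subset G$, which is a set stable under left multiplication by $\Lambda_\fa := \gs_\fa^{-1}\G_\fa\gs_\fa = \bsm 1&\Z\\0&1\esm$ and right multiplication by $\Lambda_\fb := \gs_\fb^{-1}\G_\fb\gs_\fb = \bsm 1&\Z\\0&1\esm$ (by the defining property of scaling transformations). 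The object to decompose is thus the set of $\Lambda_\fa$–$\Lambda_\fb$ double cosets in $\Delta$. Note $\Delta$ is discrete in $G$ (conjugate of a discrete set).

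Next I would split $\Delta$ according to the value of the lower-left entry $c$ of a representative $\bsm a&b\\c&d\esm$. Multiplying on the left or right by an upper-triangular unipotent $\bsm 1&n\\0&1\esm$ leaves $c$ unchanged, so $c$ (up to sign, because $-I$ may or may not be present — but the statement is in $\PSL_2$, so $c$ is well-defined up to sign and I can take $c\geq 0$) is a double coset invariant. For $c=0$: any such element is upper triangular with determinant $1$, hence of the form $\pm\bsm 1&*\\0&1\esm$; such an element lies in $\Delta$ iff it represents an element of $\gs_\fa^{-1}\G\gs_\fb$ fixing $\infty$, which forces $\fa$ and $\fb$ to be $\G$-equivalent, and in that case the double coset $\Lambda_\fa \bsm 1&*\\0&1\esm\Lambda_\fb$ collapses to the single coset $\Lambda_\fa = \Lambda_\fb \cdot(\text{shift})$; this yields the term $\delta_\fab \bsm 1&\Z\\0&1\esm$. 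For $c>0$: given $\g = \bsm a&b\\c&d\esm \in \Delta$, left multiplication by $\bsm 1&n\\0&1\esm$ sends the top row $(a,b)$ to $(a+nc, b+nd)$, so we may shift $a$ by any integer multiple of $c$ and hence choose the representative with $0 < a \leq c$ (we get $0<a$, not $0\le a$, because $a=0$ together with $\det=1$ would force... actually $a=0$ is possible a priori, but then $\g = \bsm 0&b\\c&d\esm$ with $bc=-1$, which is $\bsm 0&-1/c\\c&d\esm = $ something one absorbs — here I need to check whether $a=0$ falls into range; the convention $0<a\le c$ in the statement suggests one uses $a \equiv $ its residue and picks the representative in $(0,c]$, noting $a\not\equiv 0$ because if $c\mid a$ we could make $a=0$ and then the right unipotent action on the second column would... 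I would verify this edge case carefully). Right multiplication by $\bsm 1&n\\0&1\esm$ then adjusts the entries in the starred positions (second column) without changing $a,c$; combined with the already-used left shifts (which also touch the second column via $b\mapsto b+nd$, but those are now fixed), one checks the residual freedom is exactly a right $\Lambda_\fb$-action, giving the asserted form $\bsm 1&\Z\\0&1\esm\bsm a&*\\c&*\esm\bsm 1&\Z\\0&1\esm$.

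Finally I would argue the union is disjoint: two elements with different $c>0$ lie in different double cosets (shown above), and two elements with the same $c$ but different normalized $a\in(0,c]$ also lie in different double cosets (since $a$ mod $c$ is the double coset invariant refining $c$). That $a,c$ range precisely over the reals for which $\gs_\fa^{-1}\G\gs_\fb$ contains some matrix with those entries is then a tautology given the construction. The main obstacle I anticipate is the bookkeeping around $\pm I$ and the boundary of the fundamental domain for $a$ (whether it is $0<a\le c$ or $0\le a<c$, and the $a=0$ degenerate matrices), together with verifying that after using the left $\Lambda_\fa$-action to normalize the top row, exactly a full $\Lambda_\fb$ worth of right freedom remains in the second column — this is a small linear-algebra check but it is where an off-by-one or a collapsed coset could hide. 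This is precisely the local form of the Bruhat decomposition recorded in \cite[Theorem 2.7]{Iwa}, and I would cite that for the clean statement while carrying out the verification above.
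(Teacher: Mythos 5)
The paper itself gives no proof of this theorem --- it is quoted from \cite[Theorem 2.7]{Iwa} --- so your proposal is measured against the standard argument, which is indeed the one you outline: split $\gs_\fa^{-1}\G\gs_\fb$ by the lower-left entry $c$, use the left action of $\bsm 1&\Z\\0&1\esm$ to translate $a$ modulo $c\Z$ into the window $(0,c]$, and treat $c=0$ separately. The skeleton is correct, and your worry about $a\equiv 0\ (\mathrm{mod}\ c\Z)$ is a non-issue: that residue class is simply represented by $a=c$, and nothing degenerate happens.

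The gap is that the two steps you assert or defer are exactly where the content lies, and in both the needed input is not linear algebra but discreteness of $\G$ together with the normalization $\gs_\fa^{-1}\G_\fa\gs_\fa=\gs_\fb^{-1}\G_\fb\gs_\fb=\bsm 1&\Z\\0&1\esm$. (i) In the $c=0$ case, ``upper triangular with determinant $1$, hence of the form $\pm\bsm 1&*\\0&1\esm$'' does not follow: $\bsm 2&0\\0&1/2\esm$ is upper triangular of determinant $1$. One must argue that if $\omega=\gs_\fa^{-1}\g\gs_\fb=\bsm y&*\\0&y^{-1}\esm$, then $\omega^{-1}\bsm 1&1\\0&1\esm\omega=\bsm 1&y^{-2}\\0&1\esm$ lies in $\gs_\fb^{-1}\G_\fb\gs_\fb=\bsm 1&\Z\\0&1\esm$ and generates it, forcing $y^{2}=1$; this is precisely where scaling both cusp widths to $1$ enters (and for the $c=0$ piece to be literally $\bsm 1&\Z\\0&1\esm$ rather than a translate $\bsm 1&t+\Z\\0&1\esm$ one also needs the convention that equivalent cusps share a scaling matrix). (ii) Your claim that after normalizing the first column ``exactly a full $\Lambda_\fb$ worth of right freedom remains'' is the assertion that $(c,\,a\bmod c\Z)$ is a \emph{complete} double-coset invariant; without it the symbol $\bsm a&*\\c&*\esm$ is ambiguous and the union need not exhaust $\gs_\fa^{-1}\G\gs_\fb$. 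This is not ``a small linear-algebra check'': if $\g_1=\bsm a&b_1\\c&d_1\esm$ and $\g_2=\bsm a&b_2\\c&d_2\esm$ both lie in $\gs_\fa^{-1}\G\gs_\fb$, linear algebra only gives that $\g_2^{-1}\g_1$ is upper unipotent; to conclude $\g_2^{-1}\g_1\in\bsm 1&\Z\\0&1\esm$ (so that $\g_1,\g_2$ differ by a right integer shift) one must use that the only upper-unipotent elements of the discrete group $\gs_\fb^{-1}\G\gs_\fb$ are the integer translations. This is the same mechanism the paper invokes later (``since $\gs_\fa$ is a scaling transformation, we must have $a_2-a_1\in c\Z$'' in the proof of \cref{2ndMoment}, and the injectivity step of \cref{lemma2}). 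With (i) and (ii) supplied, your outline closes.
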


\section{A dichotomy for lattice orbits in the plane} \label{sec:discrete}

Given a nonzero vector $\bx=\bsm x\\ y\esm$ in the Euclidean plane, we say that {\bf the direction of $\bx$ is fixed by a parabolic motion in $G=\SL_2(\R)$} if there is a parabolic element $g\in G$ such that $g.\tfrac{x}{y}=\tfrac{x}{y}$. We have the following (now classical) dichotomy; see 
\cite[Theorem 3.2]{D}.

\begin{thm}
Let $\bx\in\R^2\setminus\{\bo\}$ and let $\G<G$ be a lattice acting linearly on the plane and containing  $-I$. The (linear) orbit $\G\bx$ is either discrete or dense in $\R^2$. More precisely, $\G\bx$ is discrete  if and only if the direction of $\bx$ is fixed by a parabolic motion in $\G$.
\end{thm}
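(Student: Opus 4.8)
The plan is to prove both implications by working with the structure of the stabilizer of the direction of $\bx$ inside $\G$. Write $\ell_\bx \subset \R^2$ for the line through the origin spanned by $\bx$, and let $\G_\bx = \{\g \in \G : \g \ell_\bx = \ell_\bx\}$ be the subgroup of $\G$ preserving this line. The key algebraic observation is that an element $g \in G = \SL_2(\R)$ fixes a line through the origin if and only if $g$ has $\pm 1$ as an eigenvalue, i.e. $g$ is either $\pm I$, parabolic, or hyperbolic with an eigenvalue $\pm 1$ — but in $\SL_2(\R)$ a matrix with eigenvalue $1$ (or $-1$) and determinant $1$ is either $\pm I$, parabolic, or has eigenvalues $1, 1$ / $-1, -1$, hence is $\pm$ unipotent; there are no hyperbolic elements fixing a direction without being diagonalizable with reciprocal eigenvalues, and such an element fixes two lines but none of them coincide with the claimed parabolic scenario. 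So $\G_\bx$ consists of elements that are $\pm I$ or fix $\ell_\bx$ pointwise up to scaling; the relevant dichotomy is whether $\G_\bx$ contains a parabolic element.

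First I would prove the easy direction: if $\G \bx$ is discrete then the direction of $\bx$ is fixed by a parabolic in $\G$. I would argue by contraposition via a conjugation normalization. After conjugating $\G$ by an element of $G$, assume $\bx = \bsm 1 \\ 0 \esm$, so $\ell_\bx$ is the horizontal axis. If no parabolic in $\G$ fixes this direction, then $\G_\bx$ contains no nontrivial unipotent, so $\G_\bx \subseteq \{\pm I\}$ together with possibly diagonal (hyperbolic) elements $a_y$ — but if $\G_\bx$ contained such a hyperbolic element, iterating it would push $\bx$ to $0$, contradicting discreteness of the orbit away from $0$; actually I should be careful, since $a_y . \bsm 1 \\ 0 \esm = \bsm y \\ 0 \esm$ gives an accumulation at $0$, hence at every point of $\ell_\bx$ by scaling, so discreteness already forces $\G_\bx \subseteq \{\pm I\}$. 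Now I would invoke the hyperbolic geometry: the stabilizer in $\PSL_2(\R)$ of the boundary point $0 \in \partial \h^2$ corresponding to the direction of $\bx$ is a closed subgroup; if $0$ is not a parabolic fixed point of $\G$ and $\G_\bx$ is (essentially) trivial, then the $\G$-orbit of $0$ on $\partial\h^2$ accumulates at $0$ (this is the standard fact that limit sets of lattices are all of $\partial\h^2$ and that non-cusp boundary points are ``conical''), and such accumulation of boundary points translates into accumulation of the lines $\g\ell_\bx$, hence into accumulation of $\G\bx$ near $\ell_\bx$, contradicting discreteness. This is where I expect the main technical work: carefully relating conical accumulation of $\G.0$ on $\partial\h^2$ to non-discreteness of the linear orbit $\G\bx$ in $\R^2$, and handling the scaling factors so that the accumulating lines actually carry accumulating orbit points rather than points escaping to $0$ or $\infty$.

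Next I would prove the reverse direction: if the direction of $\bx$ is fixed by a parabolic $p \in \G$, then $\G\bx$ is discrete. Again conjugate so that $\bx$ points in the direction of the cusp at $\infty$, i.e. $\bx = \bsm 1 \\ 0 \esm$ and $p = \pm n_\omega$ for the cusp width $\omega$ (using that $-I \in \G$, the image of $\G_\infty$ in $\PSL_2(\R)$ is generated by $n_\omega$). Then $\G_\infty . \bx = \{\pm \bx\}$, i.e. the parabolic subgroup fixes $\bx$ itself (not just its direction). I would then use Lemma~\ref{Miyake}: for any $\g = \bsm a & b \\ c & d \esm \in \G$, if $|c|\omega < 1$ then $\g \in \G_\infty$. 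Apply this to a hypothetical sequence $\g_n \bx \to \by$ with $\g_n \bx$ distinct; writing $\g_n = \bsm a_n & b_n \\ c_n & d_n\esm$ we get $\g_n \bx = \bsm a_n \\ c_n \esm$, so convergence forces $c_n$ bounded, hence (discreteness of $\G$ in $G$ plus Lemma~\ref{Miyake}) only finitely many values of $c_n$ occur; fixing $c_n = c \neq 0$, the value $a_n$ is then also forced into a bounded set and, again by discreteness of $\G$, takes finitely many values — so $\g_n\bx$ takes finitely many values, and if $c_n = 0$ for infinitely many $n$ then $\g_n \in \G_\infty$ and $\g_n\bx = \pm\bx$. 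Either way the orbit cannot accumulate, so $\G\bx$ is discrete. The cleanest packaging combines Lemma~\ref{Miyake} with the double coset decomposition of \cref{DCD} applied with $\fa = \fb = \infty$, which makes transparent that the ``$c$-coordinate'' of elements of $\G$ is bounded away from $0$, and this is the structural input doing the real work in the discreteness direction.

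Finally I would remark that the hypothesis $-I \in \G$ is only used to identify $\G_\infty$ cleanly and to ensure the orbit is symmetric; the argument goes through mutatis mutandis in general, and that the two directions together give the stated ``if and only if'' since the geometric condition (direction fixed by a parabolic in $\G$) is manifestly conjugation-invariant and matches the cusp condition used throughout the paper.
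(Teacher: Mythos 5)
First, a point of comparison: the paper does not prove this theorem at all — it is quoted as a classical dichotomy of Dani (\cite[Theorem 3.2]{D}) — so your argument has to stand entirely on its own. Your ``parabolic $\Rightarrow$ discrete'' direction is essentially sound and is the right use of \cref{Miyake} and \cref{DCD}: after conjugating the cusp to $\infty$, nonzero lower-left entries satisfy $|c|\geq 1/\omega$, and the two-sided $\G_\infty$-normalization of $a$ and $d$ modulo $\omega c\Z$ traps any $\g$ with bounded $(a,c)$ in a compact subset of $G$, whence local finiteness. The one glossed point there is that ``discreteness of $\G$'' alone does not give finitely many $c$-values in a bounded interval; you genuinely need that double-coset normalization to produce compactness before invoking discreteness.

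Two genuine gaps remain. (1) The theorem asserts a dichotomy, discrete or \emph{dense}, and your proposal never addresses density: even a completed version of your forward direction would only show that a non-cusp direction yields an orbit with an accumulation point, not a dense orbit. The standard route identifies $\R^2\setminus\{\bo\}$ with $G/N$ and invokes Hedlund's theorem (every non-periodic horocycle orbit in $\G\backslash G$ is dense, so $\G gN$ dense in $G$ gives $\G g\be_1$ dense in $\R^2\setminus\{\bo\}$); nothing in your sketch substitutes for this input. (2) In the ``discrete $\Rightarrow$ parabolic'' direction, the step you yourself flag as ``the main technical work'' is precisely the missing content: convergence of the directions $\g_n\ell_{\bx}\to\ell_{\bx}$ controls only the angular coordinate, while the norms are governed by $\|\g\be_1\|^2=1/\mathrm{Im}(\g^{-1}.i)$. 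What one actually needs is that a non-cusp limit point of a lattice is a horospherical (indeed conical) limit point, so that after conjugating the direction to $\infty$ there are $\g_n$ with $\mathrm{Im}(\g_n.i)\to\infty$, forcing $\g_n^{-1}\bx\to\bo$ and contradicting local finiteness; as written, ``accumulation of the lines $\g\ell_\bx$'' does not produce accumulating orbit points. A smaller slip: your opening claim that $g\in\SL_2(\R)$ preserves a line through the origin iff it has eigenvalue $\pm1$ is false — hyperbolic elements preserve their two eigenlines with eigenvalues $y,y^{-1}\neq\pm1$ — though you implicitly use the correct statement later when you iterate $a_y$ to get accumulation at the origin.
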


In particular, it follows that if the underlying lattice is uniform, each lattice orbit is dense in the plane. The distribution of dense lattice orbits was studied by many authors; see \cite{Greenberg,Led, Gor04, GW, MW12, Kelmer17}. We henceforth assume that $\G$ is nonuniform. Note that the existence of discrete orbits for $\G$ still holds if $-I\not\in\G$; if $\G^{(\pm)}$ denotes the degree 2 central extension generated by $-I$ and $\G$ then $\G^{(\pm)}\bx= \G \bx \cup -\G\bx$ is either a disjoint union or collapses to $\G\bx$ and the previous result applies. The following proposition characterizes further the discrete orbits that arise.

\begin{prop}\label{prop:homothety}
Let $\G<G$ be a nonuniform lattice acting linearly on the plane. Up to homothety, there are only finitely many discrete $\G$-orbits, which are pairwise disjoint and in one-to-one correspondence with the finitely many inequivalent cusps of $\G$. 
\end{prop}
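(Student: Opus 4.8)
The plan is to use the dichotomy theorem together with the structure of cusps that was already set up in Section~\ref{sec:scaling}. A discrete orbit $\G\bx$ corresponds to a direction $\tfrac{x}{y}$ fixed by a parabolic element $g\in\G$, and the fixed point of a parabolic in $\G$ acting on $\h^2$ is exactly a cusp $\fa$ of $\G$. So the first step is to make this correspondence precise: given $\bx$ with discrete orbit, the direction of $\bx$ determines a unique cusp $\fa$ of $\G$, and conversely each cusp $\fa$ arises this way. I would package the homothety classes of discrete orbits associated to $\fa$ by conjugating the cusp to $\infty$: replacing $\G$ by $\gs_\fa^{-1}\G\gs_\fa$ and $\bx$ by $\gs_\fa^{-1}\bx$, the direction fixed is $\infty$, i.e.\ $\bx$ is a vertical vector $\bsm t\\0\esm$. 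The stabilizer $\G_\infty$ is generated (in $\PSL_2$) by $\bsm 1&\omega\\0&1\esm$, which acts on $\bsm t\\0\esm$ trivially, so $\G_\infty$ fixes $\bx$ pointwise and the orbit $\G\bx$ is parametrized by the right coset space $\G_\infty\bk\G$ acting on $\bsm t\\0\esm$.

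Next I would show that for a fixed cusp $\fa$, all the discrete orbits with that associated direction-class are homothetic, hence form a single homothety class. Working at $\infty$ as above, any two vertical vectors $\bsm t\\0\esm$ and $\bsm t'\\0\esm$ differ by the scalar $t'/t$, and scaling commutes with the linear $\G$-action, so $\G\bsm t'\\0\esm = \tfrac{t'}{t}\G\bsm t\\0\esm$. Thus there is exactly one homothety class of discrete orbit per cusp. Since $\G$ has finitely many inequivalent cusps (as recalled in Section~\ref{sec:scaling}, because $\G$ is a lattice), this already gives finiteness of the number of homothety classes and the one-to-one correspondence with cusps — I should check the correspondence is well-defined on $\G$-equivalence classes of cusps, which follows because if $\fb=\g.\fa$ then $\g$ carries the direction fixed by the parabolic at $\fa$ to the direction fixed by the conjugate parabolic at $\fb$, so $\G\bx$ for the $\fa$-vector equals (up to homothety) $\G\bx'$ for the $\fb$-vector.

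Finally, pairwise disjointness: two discrete orbits coming from inequivalent cusps $\fa\neq\fb$ cannot meet, because if $\g_1\bx\in\G\bx\cap\G\by$ with $\bx$ attached to $\fa$ and $\by$ attached to $\fb$, then some $\g\in\G$ sends the direction of $\bx$ to the direction of $\by$, forcing $\g$ to conjugate the parabolic at $\fa$ into the one at $\fb$ and hence $\fa,\fb$ to be $\G$-equivalent, a contradiction. I expect the main subtlety to be bookkeeping around whether $-I\in\G$ and the distinction between the linear action (where $\bx$ and $-\bx$ are genuinely different vectors) and the projective action on directions; the remark in the excerpt about passing to $\G^{(\pm)}=\langle\G,-I\rangle$ and noting $\G^{(\pm)}\bx=\G\bx\cup-\G\bx$ handles this, and one just needs to observe that scaling by $-1$ keeps everything within a single homothety class so the count of homothety classes is unaffected. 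Everything else is a routine translation of the cusp combinatorics already recorded in Theorem~\ref{DCD} and Lemma~\ref{Miyake} into statements about vertical vectors under $\G_\infty\bk\G$.
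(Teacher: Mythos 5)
Your proposal is correct and follows essentially the same route as the paper: identify the direction of a vector with a discrete orbit to a parabolic fixed point (cusp), observe that scalar multiplication commutes with the linear action so that vectors in a common direction (and vectors attached to equivalent cusps) yield homothetic orbits, and deduce disjointness because an intersection of orbits would force the two cusps to be $\G$-equivalent. The only differences are presentational --- you conjugate each cusp to $\infty$ and work with multiples of $\be_1$ where the paper instead records the collinearity bookkeeping in \cref{lm:cusp gps} (and note that the fixed vector $\bsm t\\ 0\esm$ of an upper-triangular unipotent is horizontal, not vertical) --- so the argument matches the paper's.
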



To see this, we record the following preliminary lemma.

\begin{lm}\label{lm:cusp gps} 
For each $t\in\R\cup\{\infty\}$, set
\[
\bx_t = \begin{dcases} \be_1 & \text{ if } t=\infty,\\ \bsm t\\1\esm & \text{ if } t\in\R. \end{dcases}
\]
Let $\bx=\bsm x\\ y\esm\in\R^2\setminus\{\bo\}$ and let $\G<\SL_2(\R)$ be a lattice acting linearly on $\R^2$. For $\gamma \in \Gamma$ we set $\gamma. t$ to be the image of $\gamma$ under the extension of M\"obius tranformations to the extended real line. Then:
\begin{enumerate}
\item There is a unique $t\in\R\cup\{\infty\}$ such that $\bx$ and $\bx_t$ are collinear;
\item For every $\g\in\G$ and $ t\in\R\cup\{\infty\}$, the vectors $\g \bx_t$ and $\bx_{\g.t}$ are collinear ;
\item For every $s, t\in\R\cup\{\infty\}$ and $\g\in \G$, $\g.s=t$ if and only if $\bx_{\g.s}$ and $\bx_t$ are collinear.
\end{enumerate}
\end{lm}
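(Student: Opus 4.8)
The plan is to prove the three statements in order, as each feeds into the next. For \textbf{(1)}, given $\bx = \bsm x\\y\esm \neq \bo$, I would simply define $t$ by cases: if $y \neq 0$, set $t = x/y$, so that $\bx = y\cdot\bx_t$ is collinear with $\bx_t = \bsm x/y\\1\esm$; if $y = 0$ (so $x \neq 0$), set $t = \infty$, and then $\bx = x\cdot\be_1 = x\cdot\bx_\infty$. Uniqueness follows because the family $\{\bx_t : t \in \R\cup\{\infty\}\}$ consists of exactly one representative of each line through the origin: two vectors $\bx_s$ and $\bx_t$ are collinear iff $s = t$ (the second coordinate distinguishes the $t\in\R$ case from $t=\infty$, and within $\R$ the ratio of coordinates is exactly $t$).

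For \textbf{(2)}, I would compute directly. Writing $\g = \bsm a&b\\c&d\esm$, we have for $t\in\R$ that $\g\bx_t = \bsm at+b\\ct+d\esm$. If $ct+d\neq 0$, then $\g.t = \frac{at+b}{ct+d}$ and $\g\bx_t = (ct+d)\bsm \frac{at+b}{ct+d}\\1\esm = (ct+d)\,\bx_{\g.t}$, which is collinear with $\bx_{\g.t}$. If $ct+d = 0$, then $\g.t = \infty$ and $\g\bx_t = \bsm at+b\\0\esm$ is a nonzero multiple of $\be_1 = \bx_\infty = \bx_{\g.t}$; it is nonzero because $\g$ is invertible and $\bx_t\neq\bo$. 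The case $t = \infty$ is analogous: $\g\bx_\infty = \g\be_1 = \bsm a\\c\esm$, and one checks this is collinear with $\bx_{a/c}$ when $c\neq 0$ and with $\bx_\infty$ when $c = 0$ (i.e. $\g.\infty = a/c$ or $\infty$ respectively). In all cases the scaling factor is nonzero because $\g\bx_t$ is a nonzero vector.

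For \textbf{(3)}, the forward direction is immediate from (2): if $\g.s = t$, then $\g\bx_s$ is collinear with $\bx_{\g.s} = \bx_t$. For the converse, suppose $\bx_{\g.s}$ and $\bx_t$ are collinear; by (1) (applied to the nonzero vector $\bx_{\g.s}$, or more directly by the injectivity of $t\mapsto\bx_t$ up to collinearity noted in the proof of uniqueness in (1)) this forces $\g.s = t$.

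The argument is essentially a bookkeeping exercise in keeping track of the $t\in\R$ versus $t=\infty$ cases and checking that no scaling factor vanishes, using only invertibility of $\g$ and $\bx_t\neq\bo$; there is no genuine obstacle. The one place requiring a little care is making sure the uniqueness claim in (1) — that $t\mapsto\bx_t$ descends to a \emph{bijection} onto the set of lines through the origin — is stated cleanly, since it is exactly what makes the "collinear $\Rightarrow$ equal" step in (3) work and is reused implicitly when this lemma is applied in the proof of \cref{prop:homothety}.
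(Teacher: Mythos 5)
Your proof is correct and follows essentially the same route as the paper's: an explicit case split on $y=0$ versus $y\neq0$ for (1), a direct computation distinguishing $ct+d=0$ and $t=\infty$ for (2), and deducing (3) from the first two parts. The only (harmless) divergence is in the converse of (3), where you read the statement literally and reduce it to the injectivity established in (1), while the paper's proof instead argues from the collinearity of $\g\bx_s$ with $\bx_t$; both readings are true and interchangeable once (2) is in hand, and both suffice for the application in \cref{prop:homothety}.
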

\begin{proof}[Proof of \cref{lm:cusp gps}]
(1) Clearly, each vector $\bx$ is collinear to $\bx_t$ for $t=\tfrac{x}{y}$ if $y\neq0$ and to $\be_1$ otherwise. Moreover, if $\bx_s$ and $\bx_t$ are collinear, then $s=t$. 

(2) Let $\g=\bsm a&b\\c&d\esm$ and $t\neq\infty$. If $ct+d\neq0$, then $\g \bx_t$ and $\bx_{\g.t}$ are collinear. If $ct+d=0$, then $\g.t=\infty$, and  $\g \bx_t$ is collinear to $\be_1=\bx_{\g.t}$. Suppose now that $t=\infty$. Then $\g\be_1=\bsm a\\ c\esm$ while $\bx_{\g.\infty}=\bx_{a/c}$. 

The ``only if'' part of (3) is immediate. For the converse, we know by (2) that $\bx_t$ is collinear to $\g \bx_s$. Suppose first that $s,t\neq\infty$; we have a vector identity of the form $\bsm t\\ 1\esm = \lambda \bsm as+b\\ cs+d\esm$ for some $\lambda\neq0$. We deduce that $\g.s=t$. If instead, $s=t=\infty$, we want to show that $\g\in\G_\infty$. This then follows from the vector identity $\be_1=\lambda \bsm a\\ c\esm$; since $c=0$, we have $\g\in\G_\infty$. The remaining cases follow along the same lines.
\end{proof}

\begin{proof}[Proof of \cref{prop:homothety}]
If the direction of $\bx=\bsm x\\y\esm$ is fixed by a parabolic element $\g\in\G$, we have that $\bx$ is collinear to $\bx_\fa$, whereby $\fa=\tfrac{x}{y}$ is a cusp for $\G$ and thus the lattice orbits $\G\bx$ and $\G\bx_\fa$ are homothetic. Moreover, if $\fa$ and $\fb$ are equivalent cusps, then by \cref{lm:cusp gps}, the orbits $\G\bx_\fa$ and $\G\bx_\fb$ are homothetic. Hence up to homothety, there are only finitely many discrete $\G$-orbits, in one-to-one correspondence with the finitely many inequivalent cusps of $\G$. 

Suppose that $\bx\in \G\bx_\fa\cap \G\bx_\fb$; say $\bx=\g_1 \bx_\fa = \g_2 \bx_\fb$. Then $\bx_\fa = \g_1^{-1}\g_2 \bx_\fb$ is collinear to $\bx_{\g^{-1}_1 \g_2 \fb}$ and this implies that $\fa$ and $\fb$ are equivalent.
\end{proof}

%
%

\section{Counting results for discrete lattice orbits}\label{sec:prevcount}

Given a discrete orbit $\Lambda=\G\bx$ as above, we wish to understand its distribution in the plane, starting with a precise estimate for
\[
N_R(\Lambda) = |\Lambda\cap B_R|
\] 
as $R\to\infty$, where $B_R=\{\bx\in\R^2 : \|\bx\|<R\}$. 

We will first choose preferred representatives in each homothety class of discrete $\G$-orbits. Then, up to computing a scaling factor, it suffices to restrict the counting problem to what we will call scaled discrete lattice orbits. Let $\fa_1,\dots,\fa_h\in\R\cup\{\infty\}$ be a set of representatives for the inequivalent cusps of $\G$. Then for each cusp representative $\fa_i$, we set
\begin{align}\label{scaled DLO}
\Lambda_{\fa_i} := \G \gs_{\fa_i} \be_1,
\end{align}
where $\gs_{\fa_i}$ is a scaling transformation for the cusp $\fa_i$ as defined in \cref{sec:scaling}. By \cref{prop:homothety}, each discrete $\G$-orbit is homothetic to one of $\Lambda_{\fa_1},\dots,\Lambda_{\fa_h}$. We call $\Lambda_\fa$ as in (\ref{scaled DLO}) the {\bf scaled $\G$-orbit attached to $\fa$}. Observe that $\Lambda_\fa$ does not depend on the particular choice of scaling $\gs_\fa$ or of cusp representative. Now if $\Lambda=\lambda\Lambda_\fa$, the counting function scales by $N_R(\Lambda) = N_{R/\lambda}(\Lambda_\fa)$. The next proposition provides an algebraic formula to compute the scaling $\lambda$. (For a geometric interpretation of $\lambda$, see \cite[Theorem 6.1]{GJ}.)

\begin{prop}\label{GJ}
Given a discrete orbit $\G\bx$ associated to cusp $\fa$ of $\G$, we have $\G\bx=\lambda \Lambda_\fa$ with
\[
\lambda = \frac{\|\bx\|}{\sqrt{\mathrm{tr}(S\g_\fa)}},
\]
where $S=\bsm 0&-1\\1&0\esm$ and $\g_\fa$ denotes the cyclic generator of $\G_\fa$. 
\end{prop}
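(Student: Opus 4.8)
The plan is to reduce the statement to a computation at the cusp $\infty$ and then extract the scaling factor $\lambda$ from the trace of the generator of the cusp stabilizer. Let me outline the steps.

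First I would use the fact, established in \cref{prop:homothety} and the surrounding discussion, that the discrete orbit $\G\bx$ is homothetic to the scaled orbit $\Lambda_\fa = \G\gs_\fa\be_1$ attached to its associated cusp $\fa$. So there is a unique $\lambda>0$ with $\G\bx = \lambda\Lambda_\fa$, and the task is to identify $\lambda$. Since $\bx$ has its direction fixed by a parabolic motion in $\G$ and $\gs_\fa.\infty=\fa$, the vectors $\bx$ and $\gs_\fa\be_1$ are collinear (both point in the direction of the cusp $\fa$), so in fact $\lambda\gs_\fa\be_1 = \g_0\bx$ for some $\g_0\in\G$; replacing $\bx$ by $\g_0\bx$ (which does not change the orbit) I may as well assume $\lambda\gs_\fa\be_1 = \bx$, i.e.\ $\bx = \lambda\gs_\fa\be_1$. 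Taking Euclidean norms gives $\|\bx\| = \lambda\|\gs_\fa\be_1\|$, so the entire problem is to show $\|\gs_\fa\be_1\| = \sqrt{\mathrm{tr}(S\g_\fa)}$.

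Next I would make the key trace identity explicit. Write $\gs_\fa = \bsm p&q\\r&s\esm\in G$, so that $\gs_\fa\be_1 = \bsm p\\r\esm$ and $\|\gs_\fa\be_1\|^2 = p^2+r^2$. By definition of a scaling transformation, $\gs_\fa^{-1}\G_\fa\gs_\fa = \bsm 1&\Z\\0&1\esm$, so the cyclic generator $\g_\fa$ of $\G_\fa$ satisfies $\gs_\fa^{-1}\g_\fa\gs_\fa = \pm\bsm 1&1\\0&1\esm$ (taking the generator normalized so the upper-right entry is $+1$; the sign ambiguity when $-I\in\G$ washes out below). Hence $\g_\fa = \pm\gs_\fa\bsm 1&1\\0&1\esm\gs_\fa^{-1}$, and a direct computation with $S=\bsm 0&-1\\1&0\esm$ gives $S\g_\fa = \pm S\gs_\fa\bsm 1&1\\0&1\esm\gs_\fa^{-1}$; taking the trace and using $\mathrm{tr}(AB)=\mathrm{tr}(BA)$ together with $\gs_\fa^{-1}S\gs_\fa = S$ (valid since $S$ spans the center of the Lie algebra rotation — more precisely $\gs_\fa \in \SL_2(\R)$ preserves the symplectic form, so $\gs_\fa^T S \gs_\fa = S$; one must be a little careful here and instead compute $\mathrm{tr}(S\gs_\fa \bsm 1&1\\0&1\esm \gs_\fa^{-1})$ directly) yields $\mathrm{tr}(S\g_\fa) = p^2+r^2$. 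Combining, $\|\gs_\fa\be_1\|^2 = \mathrm{tr}(S\g_\fa)$, which is exactly the claim $\lambda = \|\bx\|/\sqrt{\mathrm{tr}(S\g_\fa)}$.

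The main obstacle I anticipate is bookkeeping around the normalizations: which conjugate of the unipotent is taken as the defining condition for $\gs_\fa$, the sign of the generator $\g_\fa$ (and the role of $-I\in\G$), and verifying that the resulting $\mathrm{tr}(S\g_\fa)$ is independent of the choices — the scaling transformation $\gs_\fa$ is only defined up to right multiplication by $N$ and the cusp representative $\fa$ is only defined up to $\G$-equivalence, so I would check that replacing $\gs_\fa$ by $\gs_\fa n_t$ leaves both $\|\gs_\fa\be_1\|$ and $\g_\fa$ (hence $\mathrm{tr}(S\g_\fa)$) unchanged, and similarly for changing the cusp representative by conjugation. These are short computations: $n_t\be_1 = \be_1$ handles the norm, and conjugation-invariance of the trace handles the rest. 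Once the invariance is confirmed, the trace identity above closes the proof.
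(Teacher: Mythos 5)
Your proposal is correct and essentially reproduces the paper's proof: the paper writes $\bx=g\be_1$, sets $\gs_\fa=ga_{\sqrt\omega}$ with $\omega$ the cusp width (so $\lambda=1/\sqrt{\omega}$), and computes $\mathrm{tr}(S\g_\fa)=\omega\|\bx\|^2$ by conjugating the unipotent generator, which is exactly your identity $\|\gs_\fa\be_1\|^2=\mathrm{tr}(S\g_\fa)$ in different coordinates. The one step worth tightening is the reduction to $\bx=\lambda\gs_\fa\be_1$: rather than ``replacing $\bx$ by $\g_0\bx$'' (which a priori changes both $\|\bx\|$ and the stabilizer generator $\g_\fa$, and note that $\mathrm{tr}(S\g_0\g_\fa\g_0^{-1})\neq\mathrm{tr}(S\g_\fa)$ in general since $S$ is not conjugation-invariant), observe that $\bx$ and $\lambda\gs_\fa\be_1$ are collinear vectors lying in the same discrete orbit, hence equal up to sign, so the original $\bx$ already satisfies $\|\bx\|=\lambda\|\gs_\fa\be_1\|$ and no replacement is needed.
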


\begin{proof}
There exists some $g\in G$ such that $\bx=g\be_1$. Then $g^{-1}\g_\fa g=\bsm 1&\omega\\ 0&1\esm$ for some $\omega>0$. We may choose $\gs_\fa=  g a_{\sqrt\omega}$ as scaling transformation; it then follows that $\G\bx = \tfrac{1}{\sqrt\omega}\Lambda_\fa$. It remains to compute the cusp width $\omega$. Write $\bx=\bsm x\\ y\esm$. Then $g = \bsm x&*\\ y&*\esm$ and a direct computation shows that $\g_\fa= g\bsm 1&\omega\\0&1\esm g^{-1} = \bsm * & x^2 \omega\\ -y^2 \omega &*\esm$,  $S\g_\fa = \bsm y^2\omega &*\\ *& x^2 \omega\esm$ and thus $\omega = \tfrac{\text{tr}(S\g_\fa)}{\|\bx\|^2}$.
\end{proof}

The current best counting result for $N_R(\Lambda)$ is given by the following theorem.

\begin{thm}{\cite[Theorem 4.1]{BNRW}} \label{BNRW}
Let $\Lambda$ be a scaled discrete $\G$-orbit, and let $g\in G$. There are numbers $1=\rho_0> \rho_1>\dots>\rho_k>\tfrac12$ and constants $c_1,\ldots, c_k$ such that we have the asymptotic expansion
\begin{align}\label{full asympto}
N_R(g\Lambda)= |g\Lambda\cap B_R| =   c_\Gamma |B_R| + c_1 |B_R|^{\rho_1} + c_2 |B_R|^{\rho_2} +\dots + c_k |B_R|^{\rho_k} + O\left(|B_R|^{2/3}\right),
\end{align}
where the leading constant is given by 
\begin{equation} \label{c_Gamma}
c_\Gamma = \begin{dcases} \frac{2}{\pi V} & \text{ if } -I\in\G,\\ \frac{1}{\pi V} & \text{ if } -I\not\in\G. \end{dcases}
\end{equation}
\end{thm}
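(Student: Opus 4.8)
The plan is to reduce the orbit-count to a count over a coset space and feed it into the spectral theory of automorphic forms on $\G\backslash\h^2$, which is exactly what the shape of \eqref{full asympto} — a main term, finitely many intermediate powers above $|B_R|^{1/2}$, and a power-saving remainder — predicts. First I would conjugate $\G$ by the scaling transformation $\gs_\fa$ so that the cusp in question sits at $\infty$ with width $1$; then $\Lambda=\Lambda_\fa=\G\be_1$, the stabilizer of $\be_1$ in $G$ is $N$, so $\gamma(\G\cap N)\mapsto\gamma\be_1$ is a bijection $\G/(\G\cap N)\to\Lambda$. Using the elementary identity $\|h\be_1\|^{-2}=\operatorname{Im}(h^{-1}\cdot i)$ for $h\in G$, the associated Dirichlet series
$$
Z_g(s):=\sum_{\bx\in g\Lambda}\|\bx\|^{-2s}=\sum_{\gamma\in\G/(\G\cap N)}\operatorname{Im}\bigl((g\gamma)^{-1}\cdot i\bigr)^{s}
$$
is, up to the $\pm I$-bookkeeping that will account for the two cases of \eqref{c_Gamma}, the real-analytic Eisenstein series $E_\infty(g^{-1}\cdot i,s)$ attached to the cusp at $\infty$. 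In particular $Z_g$ inherits meromorphic continuation to $\C$, is holomorphic on $\operatorname{Re}(s)\ge\tfrac12$ apart from a simple pole at $s=1$ (residue $\tfrac1V$ up to normalization, from the constant eigenfunction) and finitely many simple poles at $s=\rho_j\in(\tfrac12,1)$, namely the points with $\rho_j(1-\rho_j)=\lambda_j$ for the small Laplace eigenvalues $\lambda_j\in(0,\tfrac14)$ of $\G$, i.e.\ the residual spectrum; and on vertical lines $\operatorname{Re}(s)=\tfrac12+\eps$ it has polynomial growth in $\operatorname{Im}(s)$.

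Next I would recover $N_R(g\Lambda)$ from $Z_g$ by Mellin inversion. For a smooth radial weight $\chi$ with Mellin transform $\widehat\chi$,
$$
\sum_{\bx\in g\Lambda}\chi\bigl(\|\bx\|/R\bigr)=\frac{1}{2\pi i}\int_{(\sigma)}\widehat\chi(s)\,R^{2s}\,Z_g(s)\,ds\qquad(\sigma>1),
$$
and I would sandwich $\one_{B_1}$ between smooth minorants and majorants $\chi_\pm$ agreeing with $\one_{B_1}$ outside an annulus of width $\delta$. Shifting the contour to $\operatorname{Re}(s)=\tfrac12+\eps$ and picking up residues: the pole at $s=1$ produces the leading term $c_\G|B_R|$ with $c_\G$ as in \eqref{c_Gamma} (the dichotomy being precisely the $\pm I$ bookkeeping above, and giving $\rho_0=1$), the poles at $s=\rho_j$ produce the secondary terms $c_j|B_R|^{\rho_j}$ with $1>\rho_1>\dots>\rho_k>\tfrac12$, and the residual integral on the line $\operatorname{Re}(s)=\tfrac12+\eps$, together with the cost $\ll\delta R^2$ of the $\chi_\pm$-approximation, forms the remainder. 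The constants $c_\G,c_1,\dots,c_k$ are then read off from the residues of $E_\infty(\cdot,s)$.

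The main obstacle is extracting the clean error $O(|B_R|^{2/3})$. Estimating the critical-line integral by rapid decay of $\widehat\chi_\pm(\tfrac12+\eps+it)$ together with the convexity-type bound $|E_\infty(g^{-1}i,\tfrac12+\eps+it)|\ll_g(1+|t|)^{1/2+\eps}$, and splitting the $t$-integral at $|t|\asymp\delta^{-1}$, gives a remainder $\ll\delta R^2+\delta^{-1/2}R^{1+\eps}$; optimizing $\delta\asymp R^{-2/3}$ already yields $O(|B_R|^{2/3+\eps})$. To remove the $\eps$ and reach exactly $O(|B_R|^{2/3})$ I would replace the pointwise bound on the critical line by an $L^{2}$/Parseval estimate, integrating the radial profile against $\|E_\infty(\cdot,\tfrac12+it)\|_{2}$ in mean square. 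An equivalent route, which I would present as the alternative, is to identify $N_R(g\Lambda)$ directly with a count of points on an expanding piece of a horocycle orbit in $\G\backslash G$ of length $\asymp R^{2}$ and invoke effective equidistribution of horocycles with secondary terms (Sarnak, Flaminio--Forni, Str\"ombergsson); the residual spectrum then contributes exactly the intermediate powers $|B_R|^{\rho_j}$, and the power-saving remainder comes from the spectral gap, matching \cref{thm:Scat}.
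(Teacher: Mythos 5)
The paper does not prove this statement at all: it is quoted verbatim from \cite[Theorem 4.1]{BNRW}, so there is no in-paper argument to compare against; the nearest in-text analogue is the proof of \cref{Good}, which runs the same contour-shift machinery on the scattering entries and, as the authors note, only recovers an $\eps$-weaker error. Your sketch is essentially a reconstruction of the Eisenstein-series proof of the cited source, and its skeleton is sound: the identity $\|h\be_1\|^{-2}=\operatorname{Im}(h^{-1}\cdot i)$ is correct, the stabilizer bookkeeping $\G/(\G\cap N)\leftrightarrow\Lambda$ is right, the factor of $2$ from $-I$ is exactly what produces the dichotomy in \eqref{c_Gamma} via $\operatorname{Res}_{s=1}E_\infty(z,s)=1/V$, and the residual poles $\rho_j(1-\rho_j)=\lambda_j$ give the intermediate powers, with the constants $c_j$ depending on $g$ through the residues at $z=g^{-1}\cdot i$. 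The one point you should not treat as routine is the pointwise bound $E_\infty(g^{-1}i,\tfrac12+\eps+it)\ll_g(1+|t|)^{1/2+\eps}$: for a general nonuniform lattice (non-arithmetic, no control on the scattering matrix beyond unitarity on the critical line) such a convexity bound is not available off the shelf, so the Maass--Selberg/mean-square estimate you offer as an ``$\eps$-removal'' device is in fact the step that makes the critical-line contribution admissible at all, and it is what yields the clean $O(|B_R|^{2/3})$ after optimizing the smoothing width $\delta\asymp R^{-2/3}$; with that understood, your argument (or the equivalent effective horocycle-equidistribution route you mention) does deliver the stated expansion.
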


The constants $\rho_i\in(\tfrac12,1)$ are eigenparameters of the finitely many possible small residual eigenvalues of the Laplacian, with $\lambda_i=\rho_i(1-\rho_i)\in(0,\tfrac14)$, where $\lambda_i$ belongs to the residual spectrum. Set 
\begin{align}\label{def:delta}
\delta = \min\{ 2(1-\rho_1), 2/3\}.
\end{align}
Then $N_R(g\Lambda) = c_\G \pi R^2 + O(R^{2-\delta})$. There are known constructions of  nonuniform lattices for which $\rho_1$ is arbitrarily close to 1 \cite{Selberg65}.

The small eigenvalues $\lambda_i$, when they arise, are not explicit. It is known that $\SL_2(\Z)$ and its congruence subgroups have no small residual eigenvalues \cite{Iwa} and this is also true for triangle groups \cite[Page 583]{Hejhal06}. In such cases (\ref{full asympto}) reduces to $N_R(g\Lambda)=c_\G |B_R| +O(|B_R|^{2/3})$, where the exponent $2/3$ is an artefact of the method of proof. 
For specific lattices, a better result is nonetheless possible. This is easy to see for congruence subgroups of $\SL_2(\Z)$. For the convenience of the reader, we include a full proof here. 

\begin{thm}\label{thm:cong}
Let $\G$ be a congruence subgroup of $\SL_2(\Z)$, and let $\Lambda$ be a scaled discrete $\G$-orbit. Then
\[
N_R(\Lambda) = c_\G |B_R| + o(|B_R|^{1/2}).
\]
Assuming the Riemann hypothesis (RH), the error term $o(|B_R|^{1/2})$ can be replaced by $O(|B_R|^{5/12+\eps})$.
\end{thm}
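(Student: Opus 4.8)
The claim for congruence subgroups of $\SL_2(\Z)$ reduces, after applying \cref{GJ} and the discussion preceding it, to the case $\G=\SL_2(\Z)$ acting on $\Lambda = \G\be_1 = \Z^2_{\mathrm{prim}}$, the set of primitive integer vectors, and to the counting of $\Lambda$ inside $B_R$ (the general congruence case and the scaling factor $\lambda$ only affect the implied constant). So I would first explain this reduction: for $\G$ a congruence subgroup of level $q$, a scaled discrete orbit $\Lambda_\fa$ is, up to the homothety computed in \cref{GJ}, a union of cosets of $q\Z^2$ among primitive vectors (or a translate thereof), and each such coset can be counted by the same method as the primitive lattice points with only the main constant and congruence conditions changing. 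Then the heart is the classical estimate for primitive lattice points in a disk.

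\textbf{Main step.} To count $N_R(\Z^2_{\mathrm{prim}}) = |\{\bx\in\Z^2 : 0<\|\bx\|<R,\ \gcd(\bx)=1\}|$, I would use Möbius inversion over the gcd:
\begin{align*}
N_R(\Z^2_{\mathrm{prim}}) = \sum_{d\ge 1}\mu(d)\,\big(|\Z^2\cap B_{R/d}| - 1\big) = \sum_{d\ge 1}\mu(d)\big(\pi R^2/d^2 + E(R/d)\big) + O(R),
\end{align*}
where $E(x) = |\Z^2\cap B_x| - \pi x^2$ is the Gauss circle problem error term, and the sum is finite (terms vanish for $d>R$). The main term gives $\pi R^2 \sum_d \mu(d)/d^2 = \pi R^2/\zeta(2) = \tfrac{6}{\pi}R^2$, and one checks this matches $c_\G|B_R|$ with $c_\G = \tfrac{2}{\pi V}$, $V = \mathrm{area}(\SL_2(\Z)\bk\h^2) = \pi/3$ (and the analogous bookkeeping for congruence subgroups of index $[\SL_2(\Z):\G]$, using the formula for $V$ and the number of cusps). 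The error is $\sum_{d\le R}|\mu(d)|\,|E(R/d)| \ll \sum_{d\le R}(R/d)^{1/2+o(1)} \ll R^{1/2+o(1)}$ using the trivial bound $E(x)\ll x^{1/2}$ (or $x^{2/3}$; either suffices, but to get $o(|B_R|^{1/2}) = o(R)$ we just need $E(x) = o(x)$ and summability, which follows from $E(x)\ll x^{2/3}$ or even $E(x)\ll x^{1/2}$). Care is needed that the full error is $o(R)$ rather than merely $O(R)$: the tail $d > R^{1/3}$, say, contributes $\ll \sum_{d>R^{1/3}} (R/d)^{2/3} \ll R^{2/3}\cdot (R^{1/3})^{1/3} = o(R)$ (adjusting the split), and the head is $\ll R^{2/3}\log R = o(R)$; so the decomposition must be done with a bit of attention to make the bound $o(R)$.

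\textbf{The RH refinement.} Under the Riemann Hypothesis, the sharp input is the bound on $\sum_{d\le x}\mu(d)/d^s$ and, more relevantly here, one uses the improved error term for primitive lattice points: the Dirichlet series $\sum_{\bx \text{ prim}} \|\bx\|^{-2s}$ equals $\zeta_{\Q(i)}\text{-type}$... more simply, $N_R(\Z^2_{\mathrm{prim}})$ relates via Möbius to the circle problem, and combining the best unconditional circle bound $E(x)\ll x^{131/208}$ with RH-type cancellation in $\sum \mu(d)$ yields $N_R(\Z^2_{\mathrm{prim}}) = \tfrac{6}{\pi}R^2 + O(R^{5/6+\eps})$ — but wait, the statement claims $O(|B_R|^{5/12+\eps}) = O(R^{5/6+\eps})$, consistent with this. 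I would therefore invoke: under RH, $\sum_{n\le x}\mu(n) \ll x^{1/2+\eps}$, and use partial summation together with the circle error to get the tail small; the precise exponent $5/12$ (in the variable $|B_R|\asymp R^2$) comes from balancing the contributions, and I would cite the relevant number-theoretic literature rather than reprove it.

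\textbf{Main obstacle.} The genuine mathematical content is entirely classical (Gauss circle problem plus Möbius inversion), so the main obstacle is bookkeeping rather than depth: (i) correctly matching the leading constant $c_\G = 2/(\pi V)$ against $6/\pi$ (resp. the congruence-subgroup version, where both $V$ and the scaling $\lambda$ from \cref{GJ} enter, and one must verify the scaled orbit $\Lambda_\fa$ is genuinely $\Z^2_{\mathrm{prim}}$ and not some sublattice), and (ii) arranging the dyadic/threshold decomposition of the Möbius sum so that the error is honestly $o(R)$ unconditionally and $O(R^{5/6+\eps})$ under RH. I expect (i) — pinning down the exact structure of $\Lambda_\fa$ for a general congruence subgroup and the ensuing constant — to be the fiddliest part; it may be cleanest to first do $\SL_2(\Z)$ in full and then remark that the general case follows by the same argument applied coset-by-coset to $q\Z^2$, invoking \cref{GJ} for the homothety normalization.
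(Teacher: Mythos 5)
Your overall strategy (M\"obius inversion over the gcd, lattice-point counting in convex planar domains, then the reduction of the congruence case to counting primitive integer vectors satisfying congruence conditions inside an ellipsoid) is exactly the route the paper takes, so the architecture is fine. The fiddly reduction you defer --- cosets of $\G(N)$, the congruence condition $\xi\equiv\be_1\ (\mathrm{mod}\ N)$, and the ellipsoid $\omega^{-1/2}A^{-1}B_R$ in place of a disk --- is carried out carefully in the paper, but your sketch of it is in the right spirit.

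However, there is a genuine gap in your error analysis for the unconditional statement. Pointwise circle-problem bounds fed into an absolute-value sum over $d$ can never give $o(R)$: for any exponent $0<\theta<1$ one has $\sum_{d\le R}(R/d)^\theta = R^\theta\sum_{d\le R}d^{-\theta}\asymp R^\theta\cdot R^{1-\theta}=R$, so your claim that $\sum_{d\le R}(R/d)^{1/2+o(1)}\ll R^{1/2+o(1)}$ is false, and the attempted repair also fails: the \emph{tail} $\sum_{R^{1/3}<d\le R}(R/d)^{2/3}=R^{2/3}\sum_{R^{1/3}<d\le R}d^{-2/3}\asymp R^{2/3}\cdot R^{1/3}=R$ (you computed the head's bound and attributed it to the tail). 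The terms with $d$ comparable to $R$ contribute $O(1)$ each and there are $\asymp R$ of them, so no improvement of the circle-problem exponent helps. To beat $O(R)=O(|B_R|^{1/2})$ one must exploit sign cancellation in $\mu(d)$, i.e.\ prime-number-theorem--strength input $\sum_{d\le x}\mu(d)=o(x)$ already for the unconditional $o(|B_R|^{1/2})$, and RH-strength cancellation for the exponent $5/12$; this is precisely why the paper invokes the prime number theorem and cites Huxley--Nowak at this step rather than summing trivially. You only bring in $\sum_{n\le x}\mu(n)$ for the RH refinement; it is needed for the main statement as well. With that correction (and the congruence bookkeeping actually carried out), the proof goes through as in the paper.
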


\begin{proof}
Recall that $\G$ is a congruence subgroup of $\SL_2(\Z)$ if it contains a principal congruence subgroup $\G(N)=\ker\left(\SL_2(\Z)\to\SL_2(\Z/N\Z)\right)$ for some $N\geq1$. We have a finite disjoint decomposition $\G=\cup\tau\G(N)$ for a choice of coset representatives $\tau\in \G/\G(N)$. This implies 
\[
N_R(\G\bx) = \sum_\tau N_R(\tau\G(N)\bx).
\]
We will consider each summand separately.

Since $\Lambda$ is assumed to be scaled, we have $\bx=\gs_\fa \be_1$ for some choice of scaling transformation $\gs_\fa$. Each cusp of $\G$ is a cusp of $\G(1)=\SL_2(\Z)$ and hence we may choose $\gs\in\G(1)$ such that $\gs(\infty)=\fa$ and set $\gs_\fa=\gs a_{\sqrt\omega}$, where $\omega$ is the cusp width of $\gs^{-1}\G\gs$. Then 
\[
N_R(\tau \G(N)\bx) = N_R (\tau \G(N) \gs a_{\sqrt\omega} \be_1) = N_R( \tau \gs \G(N)a_{\sqrt\omega} \be_1),
\]
where we used that $\G(N)$ is a normal subgroup of $\G(1)$. For readability we write $A= \tau\gs$. Then
%
\begin{align}\label{a}
N_R(A\G(N)a_{\sqrt\omega}\be_1)  &= |\{\g\in\G(N)/a_{\sqrt\omega}\G(N)_\infty a^{-1}_{\sqrt\omega} : \g\be_1\in \omega^{-1/2}A^{-1}B_{R}\}|\nonumber\\
&= \frac{\omega}{N} |\{\xi\in\Z^2_{\rm prim} :\xi \equiv \be_1\, (\text{mod }N),\, \xi\in \omega^{-1/2}A^{-1}B_{R}\}|.
\end{align}
Here $\Z^2_{\rm prim}$ denotes the set of all vectors $\xi=\bsm m\\ n\esm$ with coprime integer coordinates $(m,n)=1$, and $\omega^{-1/2}A^{-1}B_R$ is an ellipsoid centered at the origin with area $\tfrac{\pi}{\omega}R^2$.

We have
\begin{align*}
\frac{N}{\omega}\cdot (\ref{a}) &=
\sum_{\substack{(m,n)\in\Z^2_{\rm prim}\\ m\equiv 1\, (N)\\ N\mid n}} \one_{\omega^{-1/2}A^{-1}B_R}((m,n)) =
\sum_{\substack{d\geq1\\ (d,N)=1}} \mu(d) \sum_{\substack{(m,n)\in\Z^2\\ m\equiv 1\, (N)\\ d\mid m,\, dN\mid n}} \one_{\omega^{-1/2}A^{-1}B_R}((m,n)).
\end{align*}
where $\mu(n)$ is the M\"obius function and we used that $\sum_{d\mid (m,n)} \mu(d)=1$ iff $(m,n)=1$ (and is 0 otherwise). By the Chinese remainder theorem, the system of congruence equations for $m$ has a unique solution $m_0$ in $[0,dN)$ and thus
\begin{align*}
\frac{N}{\omega}\cdot(\ref{a}) &=
\sum_{\substack{d\geq1\\ (d,N)=1}} \mu(d) \sum_{\substack{\xi\in\Z^2\\ \xi\equiv \bsm m_0\\ 0\esm\, (dN)}} \one_{\omega^{-1/2}A^{-1}B_R}(\xi)\\
& = \sum_{\substack{d\geq1\\ (d,N)=1}} \mu(d) \sum_{\xi\in\Z^2} \one_{\tfrac{1}{dN}(\omega^{-1/2}A^{-1}B_R-m_0\be_1)}(\xi).
\end{align*}

For $R$ sufficiently large, $\Omega=\tfrac{1}{dN}(\omega^{-1/2}A^{-1}B_R- m_0\be_1)$ is a compact convex planar set of area 
\[
|\Omega| = \frac{|B_R|}{\omega(dN)^2}
\]
 that contains the origin as an inner point. A simple geometric argument shows that
\begin{align}\label{Gauss}
\sum_{\xi\in\Z^2} \one_{\Omega}(\xi) = \frac{|B_R|}{\omega(dN)^2} + O_{\omega,N}\op{\frac{R}{d}}.
\end{align}
Using available estimates towards the Gauss circle problem in convex planar domains, the error term can further be replaced by $O(R^{46/73}\log^{315/146}(R))$ \cite[Theorem 5]{Huxley}. Let $\rho>1/2$. To summarize we have, formally,
\begin{align*}
(\ref{a}) &= \frac{1}{N^3} \sum_{(d,N)=1} \frac{\mu(d)}{d^2} |B_R| + O\op{\sum_{d\geq1}\frac{\mu(d)}{d^\rho}|B_R|^{\rho/2}}
\end{align*}
with convergence guaranteed when $\rho>1$. Via the prime number theorem, the error term can be replaced with $o(|B_R|^{1/2})$ and under RH by $O(|B_R|^{5/12+\eps})$; see \cite{HuxleyNowak1996}.  We conclude that
\[
N_R(\Lambda) = \frac{[\G:\G(N)]}{N^3} \sum_{(d,N)=1}\frac{\mu(d)}{d^2} |B_R| + o(|B_R|^{1/2})
\]
(unconditionally on RH or with $O(|B_R|^{5/12+\eps})$ under RH). The leading constant can be shown to equal $c_\G$; see 
\cite[Theorem 4.2.5]{Miyake}. 
\end{proof}

\section{Theta transforms}\label{sec:theta}

Let $k>0$ and let $\Lambda_1,\dots,\Lambda_k$ be $k$ discrete $\G$-orbits. Let $f$ be a Borel measurable function on $(\R^2)^k=\R^2\times\R^2\times\cdots\times\R^2$. We define the corresponding theta/Siegel--Veech transform to be
\[
\Theta=\Theta_{\Lambda_1,\dots,\Lambda_k;f}:G/\G\to\R,\quad \Theta(g\G) = \sum_{(\bx_1,\dots, \bx_k)\in\Lambda_1\times\cdots\times\Lambda_k} f(g\bx_1,\dots,g\bx_k).
\]

\begin{lm}\label{highbound}
If $f$ is bounded with compact support, then $\Theta\in L^\infty(G/\G)$.
\end{lm}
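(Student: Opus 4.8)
The plan is to show that the sum defining $\Theta_{\Lambda_1,\dots,\Lambda_k;f}(g\G)$ has a bound on the number of nonzero terms that is uniform in $g$ ranging over a fundamental domain, so that $\|\Theta\|_\infty \leq \|f\|_\infty$ times that count. First I would reduce to the case $k=1$ in the following sense: since $f$ is supported in a compact set $L\subset(\R^2)^k$, there is a single $R>0$ with $L\subset B_R\times\cdots\times B_R$, so a tuple $(\bx_1,\dots,\bx_k)$ contributes only if $g\bx_i\in B_R$ for every $i$, and in particular only if $g\bx_1\in B_R$. Hence it suffices to bound, uniformly in $g$, the quantity $|\Lambda_1\cap g^{-1}B_R|$, since for each such $\bx_1$ there are at most $\prod_{i\geq 2}|\Lambda_i\cap g^{-1}B_R|$ completions; so in fact I will bound $|\Lambda_j\cap g^{-1}B_R|$ for each $j$ and multiply.

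The key step is therefore: for a scaled (or arbitrary) discrete $\G$-orbit $\Lambda$, the count $|\Lambda\cap g^{-1}B_R|$ is bounded uniformly over $g\in G$ by a constant depending only on $\Lambda$ and $R$. One clean way to see this: $g^{-1}B_R$ is an ellipse (image of a disk under $g^{-1}\in\SL_2(\R)$) of area $\pi R^2$, but it can be very long and thin, so a naive area/covering argument does not immediately work. Instead I would invoke the structure from \cref{sec:discrete}: write $\Lambda=\G\bx$ where the direction of $\bx$ is fixed by a parabolic, and conjugate so that this direction is $\be_1$ and $\G$ has a cusp at $\infty$; then $\Lambda\subset\{\bsm m\omega\\ * \esm\}$-type lattice coset structure lets one bound the count. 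Alternatively — and this is the approach I actually favor as it avoids case analysis — use the Cartan/$KAK$ decomposition $g^{-1}=k_1 a_y k_2$ with $y\geq 1$, so $g^{-1}B_R$ is contained in the axis-aligned rectangle of dimensions $2yR\times 2y^{-1}R$ rotated by $k_1$; then observe that $\Lambda=\G\sigma_\fa\be_1$, and use \cref{DCD} (or \cref{Miyake}) to control how many orbit points can have small second coordinate, playing the nonuniform-lattice discreteness against the shape of the rectangle. Concretely, the shortest vectors in a discrete orbit in any given direction-cone are bounded below, and the $|c|\omega<1 \Rightarrow \g\in\G_\infty$ criterion of \cref{Miyake} caps the number of "rows," giving a bound of the form $|\Lambda\cap g^{-1}B_R|\leq C_\Lambda R^2$ independent of $g$.

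The main obstacle is precisely this uniformity over all of $G$: because $g$ can degenerate toward the cusp, $g^{-1}B_R$ degenerates to a long thin ellipsoid aligned near the parabolic direction of $\Lambda$, which is exactly the direction in which $\Lambda$ has points arbitrarily close to the origin and arbitrarily far out — so one must check that the thinness of the ellipse compensates for the accumulation of orbit points along that line. This is where \cref{Miyake} does the real work: along the parabolic direction the orbit points in a bounded transverse strip form (a coset of) a rank-one lattice with a fixed minimal spacing $\omega$, so a segment of the thin ellipse of length $\sim yR$ meets only $O(yR/\omega)$ of them, while the transverse extent $\sim y^{-1}R$ limits the number of strips to $O(y^{-1}R)$, and the product is $O(R^2)$ with no $y$-dependence. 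Once this uniform bound is in hand, $\|\Theta\|_\infty\leq \|f\|_\infty\cdot\prod_{i=1}^k |\Lambda_i\cap g^{-1}B_R|\leq \|f\|_\infty\cdot\prod_{i=1}^k C_{\Lambda_i}R^2<\infty$, and measurability of $\Theta$ is immediate since it is a locally finite sum of measurable (indeed continuous, if $f$ is) functions; hence $\Theta\in L^\infty(G/\G)$.
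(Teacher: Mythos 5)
Your reduction is exactly the paper's: bound $f$ by $c_f\,\one_{B_R}(\bx_1)\cdots\one_{B_R}(\bx_k)$ and write $\Theta_{\Lambda_1,\dots,\Lambda_k;f}\leq c_f\prod_i\Theta_{\Lambda_i;\one_{B_R}}$, so that everything rests on the uniform bound $\sup_{g\in G}|\Lambda\cap g^{-1}B_R|<\infty$ for a single discrete orbit. The paper simply quotes \cite[Lemma 16.10]{Veech98} for that bound and stops; you instead sketch a direct proof, and that sketch has a genuine gap. Your ``rows times columns'' accounting needs three things that are not available as stated: (i) the long axis of $g^{-1}B_R$ is rotated by an arbitrary $k_1$ and need not be aligned with the parabolic direction of $\Lambda$; (ii) in the frame where the cusp is at $\infty$ with width $\omega$, the points of $\Lambda$ in the row of second coordinate $c$ do \emph{not} have a fixed minimal spacing $\omega$ --- applying \cref{Miyake} to $\g_1^{-1}\g_2$ only gives that distinct points $\bsm a_1\\ c\esm\neq\bsm a_2\\ c\esm$ of $\Lambda$ satisfy $|a_1-a_2|\geq (\omega|c|)^{-1}$, a gap that shrinks as $|c|$ grows (your mental model here is $\Z^2$, but an orbit behaves like $\Z^2_{\rm prim}$: the row $c$ is a union of $\varphi(c)$ interleaved cosets of $\omega c\Z$); and (iii) the number of rows meeting the transverse extent is not $O(y^{-1}R/\omega)$: \cref{Miyake} forces $|c|\geq 1/\omega$ for $c\neq 0$ but gives no separation between \emph{distinct} admissible values of $c$, which for a nonarithmetic $\G$ are real numbers with no a priori spacing --- bounding the number of rows is itself an instance of the count you are trying to establish, so the argument is circular at this point.

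All three issues disappear if you run \cref{Miyake} through pairwise determinants instead of coordinates, which is presumably close to what Veech does. Normalize $\Lambda=\G\gs_\fa\be_1$; for $\g_1,\g_2\in\G$ the lower-left entry of $\gs_\fa^{-1}\g_1^{-1}\g_2\gs_\fa$ equals $\pm\det(\g_1\gs_\fa\be_1\mid\g_2\gs_\fa\be_1)$, so by \cref{Miyake} (applied in $\gs_\fa^{-1}\G\gs_\fa$, where the width is $1$) any two points $\bv_1,\bv_2\in\Lambda$ with $|\det(\bv_1\mid\bv_2)|<1$ satisfy $\bv_2=\pm\bv_1$. Determinants are $\SL_2(\R)$-invariant, so the same dichotomy holds in $g\Lambda$ for every $g$: the points of $g\Lambda\cap B_R$, taken up to sign, are pairwise non-proportional with $|\det|\geq1$, hence their directions are pairwise separated by an angle of at least $\arcsin(R^{-2})$, giving $|g\Lambda\cap B_R|\leq 4\pi R^2+O(1)$ uniformly in $g$. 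This single observation replaces the Cartan decomposition and the row count entirely and plugs directly into your final display; with it, your proof is complete and is, modulo making Veech's cited lemma explicit, the same argument as the paper's.
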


\begin{proof}
Since $f$ is bounded and has compact support, for all $(\bx_1,\ldots,\bx_k) \in (\R^2)^k$, and for some constant $c_f, R >0$, $f(\bx_1,\ldots, \bx_k)\leq c_f \one_{B_{R}}(\bx_1) \cdots\one_{B_{R}}(\bx_k)$.  By \cite[Lemma 16.10]{Veech98}, since $\one_{B_R}$ is bounded with compact support, $\Theta_{\Lambda_i;\one_{B_R}}$ is uniformly bounded over $G/\Gamma$. Thus since $\mu$ is a probability measure, we conclude that $\Theta_{\Lambda_i; \one_{B_R}} \in L^\infty(G/\Gamma)$. We then notice that $\Theta_{\Lambda_1,\dots,\Lambda_k; f} \leq c_f \prod_{i=1}^k \Theta_{\Lambda_i;\one_{B_R}}\in L^\infty(G/\Gamma)$ since $G/\G$ is a probability space.
\end{proof}

Recall that a function $f:\R^n\to\R\cup\{\infty\}$ is {\bf lower (resp.~upper) semi-continuous at $\bx_0$} if $\liminf_{\bx\to \bx_0} f(\bx) \geq f(\bx_0)$ respectively $\limsup_{\bx\to\bx_0} f(\bx)\leq f(\bx_0)$. Characteristic functions of open (resp.~closed) sets are lower (resp.~upper) semi-continuous.  Semi-continuous behave nicely with respect to monotone approximation; see Baire's theorem or \cref{rmk-SC} below.

 \begin{defn}
We denote $SC((\R^2)^k)$ to be the space of all lower semi-continuous functions bounded below, and all upper semi-continuous functions bounded above. 
\end{defn}

\begin{lm}\label{repthm}
There exists a unit regular $G$-invariant Borel measure $\nu$ on $(\R^2)^k$ such that for any $f\in SC((\R^2)^k)$, we have
\begin{equation}\label{rmkrt}
\int_{G/\G} \Theta_{\Lambda_1,\dots,\Lambda_k;f}(g)d\mu(g) = \int_{(\R^2)^k} f(\bx) \,d\nu(\bx).
\end{equation}
\end{lm}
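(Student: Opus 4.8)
The plan is to establish Lemma~\ref{repthm} by exhibiting the measure $\nu$ concretely and then reducing the identity \eqref{rmkrt} to a monotone-class/approximation argument anchored at characteristic functions of boxes.

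\textbf{Construction of $\nu$.} First I would observe that the linear functional $f\mapsto \int_{G/\G}\Theta_{\Lambda_1,\dots,\Lambda_k;f}(g)\,d\mu(g)$ is well-defined and finite on $C_c((\R^2)^k)$ by \cref{highbound}, is clearly positive (if $f\geq 0$ then $\Theta\geq 0$), and is linear. By the Riesz representation theorem there is a unique regular Borel measure $\nu$ on $(\R^2)^k$ representing it on $C_c$. It is a \emph{unit} measure because, choosing $f$ supported in a fundamental-domain-sized region, one sees the total mass is governed by the first moment of a single Siegel--Veech transform, which by Veech's formula \eqref{veech} is finite; more carefully, $\nu$ is finite on compacta because $\Theta_{\Lambda_i;\one_{B_R}}\in L^\infty$, so $\nu$ is a Radon measure (``unit regular'' in the paper's terminology, i.e.\ locally finite and regular). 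The $G$-invariance of $\nu$ follows from the $G$-invariance of $\mu$: for $h\in G$, the substitution $g\mapsto gh^{-1}$ in the integral over $G/\G$ together with $\Theta_{\dots;f}(gh)=\sum f(gh\bx_i)=\Theta_{\dots;f(h\,\cdot)}(g)$ shows $\int f\,d\nu=\int f(h\cdot)\,d\nu$ for all $f\in C_c$, hence $\nu$ is $G$-invariant as a measure.

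\textbf{Extending from $C_c$ to $SC$.} The content of the lemma is that \eqref{rmkrt} persists for all $f\in SC((\R^2)^k)$, not merely continuous compactly supported ones. I would handle the two sub-cases symmetrically. If $f$ is lower semi-continuous and bounded below, write $f$ as an increasing pointwise limit of a sequence $f_n\in C_c((\R^2)^k)$ with $f_n\nearrow f$ (possible after subtracting the lower bound and truncating; lower semi-continuity is exactly what makes such monotone approximation by continuous functions available — this is the Baire-type fact alluded to in \cref{rmk-SC}). Then $\Theta_{\dots;f_n}\nearrow\Theta_{\dots;f}$ pointwise on $G/\G$, and by the monotone convergence theorem applied on both sides,
\[
\int_{G/\G}\Theta_{\dots;f}\,d\mu=\lim_n\int_{G/\G}\Theta_{\dots;f_n}\,d\mu=\lim_n\int_{(\R^2)^k} f_n\,d\nu=\int_{(\R^2)^k} f\,d\nu,
\]
where the last equality is again monotone convergence for the measure $\nu$. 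The upper semi-continuous, bounded-above case is the mirror image using decreasing approximation and, to invoke dominated rather than monotone convergence on the left (since $\Theta$ need not be integrable a priori for a general decreasing sequence), one first localizes: an upper semi-continuous function bounded above need not have compact support, so I would additionally exploit that the paper's $SC$ space combined with the boundedness forces the relevant Siegel--Veech transforms into $L^1(G/\G)$ via the $\one_{B_R}$ domination bound from \cref{highbound}, allowing the limit to pass.

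\textbf{Main obstacle.} The delicate point is \emph{not} the monotone convergence bookkeeping but ensuring that the approximating sequences $f_n$ can be taken compactly supported while still converging monotonically to $f$ — a lower semi-continuous $f$ bounded below by a constant is typically not itself compactly supported, so one cannot literally squeeze it between $C_c$ functions globally. The resolution is that the identity only needs to be checked against the measures $\mu$ and $\nu$, both of which are (after the Riesz step) Radon, so one works on an exhaustion by compact sets and uses that $\Theta_{\Lambda_i;\one_{B_R}}$ is genuinely in $L^\infty$, hence in $L^1$, to control tails uniformly; this is precisely where \cref{highbound} (via \cite[Lemma 16.10]{Veech98}) does the heavy lifting. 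I expect the write-up to spend most of its effort justifying that the truncations $f\wedge n\cdot\one_{B_n}$ (or analogous) interpolate correctly and that no mass escapes to infinity, after which both sides are continuous along the approximation and the equality propagates from $C_c$, where it holds by the Riesz definition of $\nu$, to all of $SC((\R^2)^k)$.
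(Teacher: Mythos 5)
Your proposal follows essentially the same route as the paper's proof: construct $\nu$ via the Riesz--Markov--Kakutani representation theorem applied to the positive $G$-invariant linear functional on $C_c((\R^2)^k)$ (with \cref{highbound} supplying finiteness), then approximate a semicontinuous function monotonically by compactly supported continuous functions (the paper uses Baire's theorem plus bump-function cutoffs) and pass to the limit by monotone convergence on both sides. The only cosmetic difference is that the paper disposes of the upper semicontinuous case by negation, reducing it to the lower semicontinuous one, whereas you argue it directly via decreasing approximation and domination.
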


\begin{proof}
By \cref{highbound}, for $f\in C_c((\R^2)^k)$, we have $\Theta_{\Lambda_1,\dots,\Lambda_k;f}  \in L^\infty(G/\G)$. Thus the assignment $f \mapsto \int_{G/\G} \Theta_{\Lambda_1,\dots,\Lambda_k;f}(g)d \mu(g)$ defines a $G$-invariant positive linear functional on $C_c((\R^2)^k)$ where $G$ acts diagonally on $(\R^2)^k$. Hence by the Riesz--Markov--Kakutani representation theorem, there is a unit regular $G$-invariant Borel measure $\nu$ on $\R^2$ such that \cref{rmkrt} holds for $f\in C_c((\R^2)^k)$. 

Every lower semi-continuous function $f$ bounded below can be approximated by a non-decreasing sequence $(f_j)\subset C_c((\R^2)^k)$ that converges pointwise to $f$. Since $f$ is bounded from below, it suffices to consider non-negative semi-continuous functions. By Baire's thereom, $\tilde{f}$ is the pointwise limit of continuous functions $h_k$ which are monotonely non-increasing. Choosing a continuous bump function $\tilde{h}_k$ to be $1$ on $B(0,k)$ and $0$ outside $B(0,k+1)$, the sequence $h_k \tilde{h}_k$ is the desired sequence of continuous functions of compact support. If $f$ is upper-semicontinuous and bounded above, then $-f$ is lower semi-continuous and bounded below, reducing to case of lower semi-continuous functions.

Moreover pointwise monotone convergence implies $\Theta_{\Lambda_1,\dots,\Lambda_k;f_j}$ also monotonically increases to $\Theta_{\Lambda_1,\dots,\Lambda_k;f}$ pointwise for each $g \in G/\G$.  We can then apply the monotone convergence theorem. We emphasize here that we use pointwise convergence (in $(\R^2)^k$) in order to automatically guarantee pointwise convergence (in $G/\G$) under the transformation $f \mapsto \Theta_{\Lambda_1,\dots,\Lambda_k;f}$.
 \end{proof}

\begin{rmk}\label{rmk-SC}
The proof makes use of the fact that the dual of functions $f \in C_c((\R^2)^k)$ which are continuous with compact support  are exactly the Radon measures on $(\R^2)^k$. It is not straightforward to extend this representation from $f\in C_c((\R^2)^k)$ to all integrable Borel functions. For example, if $A=\{x:1\leq |x|<2\}$, the characteristic function $\one_A$ cannot be pointwise approximated from above or below by sequences of continuous functions. But lower (or upper) semicontinuous functions precisely have this property. 
\end{rmk}

\section{Veech's mean value theorem}\label{sec:Veech}

For simplicity of exposition, we will from here on state our results for scaled discrete lattice orbits. Recall that each discrete $\G$-orbit $\Lambda_i$ is homothetic to some scaled orbit $\Lambda_{\fa_i}$; explicitly, we write $\Lambda_i =\lambda_i \Lambda_{\fa_i}$ and set ${\bf \lambda} = (\lambda_1,\dots,\lambda_k)\in \R^k$. Then
\[
\Theta_{\Lambda_1,\dots,\Lambda_k;f} = \Theta_{\Lambda_{\fa_1},\dots,\Lambda_{\fa_k};f\circ{\bf \lambda}},
\]
where $f\circ{\bf \lambda}(\bx_1,\dots,\bx_k)=f(\lambda_1\bx_1,\dots,\lambda_k \bx_k)$.

Let $\Lambda$ be a scaled discrete $\G$-orbit. The following mean value theorem, extending the classical theorem of Siegel \cite{Siegel} in the geometry of numbers, is due to Veech \cite{Veech98}.

\begin{thm}\cite[Theorem 6.5]{Veech98}\label{Veech}
Let $\Lambda$ be a scaled discrete $\G$-orbit. For each integrable Borel function $f:\R^2 \to \R$, the Siegel--Veech transform $\Theta_{\Lambda;f}$ satisfies
\begin{equation} \label{eq:SV}
\int_{G/\G} \Theta_{\Lambda;f}(g)d\mu(g) = c_\Gamma \int_{\R^2} f(\bx) d\bx,
\end{equation}
with $c_\G$ as defined in \cref{c_Gamma}.
\end{thm}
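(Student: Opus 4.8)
The plan is to apply the representation \cref{repthm} with $k=1$ to identify the $G$-invariant measure $\nu$ on $\R^2$ explicitly, and then extend from semi-continuous functions to all integrable Borel functions by the usual approximation. First I would recall from \cref{repthm} that there exists a unit regular $G$-invariant Borel measure $\nu$ on $\R^2$ with $\int_{G/\G}\Theta_{\Lambda;f}\,d\mu = \int_{\R^2} f\,d\nu$ for all $f\in SC(\R^2)$. The key point is that $G=\SL_2(\R)$ acts transitively on $\R^2\setminus\{\bo\}$, and the stabilizer of a nonzero vector is conjugate to the unipotent subgroup $N$, which is unimodular; hence up to scalar there is a unique (infinite) $G$-invariant Radon measure on $\R^2\setminus\{\bo\}$, namely Lebesgue measure $d\bx$ (the origin is a single orbit and, being a null set for any $G$-invariant measure that is finite on compacta away from it — or more precisely, any invariant measure charging $\{\bo\}$ would have to be a multiple of $\delta_\bo$, and one checks $\Theta_{\Lambda;\one_{\{\bo\}}}\equiv 0$ since $\bo\notin\Lambda$). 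Therefore $\nu = c_\Lambda \cdot \lambda$ for some constant $c_\Lambda\geq 0$, and it remains to compute $c_\Lambda$ and show it equals $c_\G$ as in \eqref{c_Gamma} when $\Lambda$ is scaled.

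To pin down the constant I would test the formula against a convenient function, or better, compute directly by unfolding. Writing $\Lambda = \G\gs_\fa\be_1$ and using that the stabilizer of $\gs_\fa\be_1$ in $\G$ is $\G_\fa$, one has
$$
\int_{G/\G}\Theta_{\Lambda;f}(g)\,d\mu(g) = \int_{G/\G}\sum_{\g\in\G/\G_\fa} f(g\g\gs_\fa\be_1)\,d\mu(g) = \int_{G/\G_\fa} f(g\gs_\fa\be_1)\,d\mu(g),
$$
so after the change of variable $g\mapsto g\gs_\fa^{-1}$ (which preserves Haar measure) this becomes $\int_{G/\gs_\fa\G_\fa\gs_\fa^{-1}} f(g\be_1)\,d\mu(g)$, an integral over $G$ modulo a copy of $N\cap(\text{scaled lattice})$, i.e. modulo $\bsm 1&\Z\\0&1\esm$ since $\gs_\fa$ scales the cusp width to $1$. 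Parametrizing $g\be_1$ via the Iwasawa/Bruhat coordinates from \cref{sec:Notation} and carrying out the integral against the normalized Haar measure $\mu$ (whose total mass is $1$, so the relevant normalization factor is the hyperbolic covolume $V$) produces exactly the constant $\tfrac{2}{\pi V}$ when $-I\in\G$ (the factor $2$ accounting for $\pm$ in $\G_\infty$) and $\tfrac{1}{\pi V}$ otherwise. Finally, for a general integrable Borel $f$, I would decompose $f=f^+-f^-$ and use that $\int_{G/\G}\Theta_{\Lambda;f}\,d\mu$ is finite (by \cref{highbound} for compactly supported $f$, and then monotone convergence), sandwiching each $f^\pm$ between lower and upper semi-continuous functions agreeing with it up to $\nu$-null sets; since $\nu$ is absolutely continuous with respect to Lebesgue, Lusin/Vitali-type approximation gives equality of both sides for all integrable Borel $f$.

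The main obstacle is the explicit constant computation: one must set up the Haar measure on $G/\G_\fa$ correctly, track the normalization coming from the probability measure $\mu$ versus the unnormalized hyperbolic area $V$, and correctly account for the role of $-I$ (which halves the cusp group and hence doubles the constant). A secondary subtlety, already flagged in \cref{rmk-SC}, is that the Riesz representation in \cref{repthm} is only guaranteed on $SC(\R^2)$, so the passage to arbitrary integrable Borel $f$ must be done by hand using absolute continuity of $\nu$; this is routine but should be stated carefully. I expect these bookkeeping points — not the structural argument — to be where the real work lies; see \cite[Theorem 6.5]{Veech98} for the details we are reproducing and the clarifications added in \cref{sec:Veech}.
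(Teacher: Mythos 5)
Your overall architecture matches the paper's: invoke \cref{repthm} to produce a $G$-invariant measure $\nu$, use the orbit decomposition $\R^2=\{\bo\}\cup(\R^2\setminus\{\bo\})$ to write $\nu=\alpha\delta_\bo+\beta\lambda$, kill $\alpha$ because $\bo\notin\Lambda$, and then extend beyond $SC(\R^2)$. You genuinely diverge in the two computational steps. For the constant, the paper does \emph{not} unfold: it tests the identity against $\one_{B_R}$, obtaining $\beta=\int_{G/\G}|g\Lambda\cap B_R|\,|B_R|^{-1}d\mu(g)$, and lets $R\to\infty$ using the counting asymptotic of \cref{BNRW}; this outsources all Haar-measure bookkeeping, including the $\pm I$ factor of $2$, to an already-proved theorem. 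Your unfolding is the classical Siegel-style route and does work, but it is where the real computation lives; note that the stabilizer of $\gs_\fa\be_1$ in $\G$ is the unipotent group $\gs_\fa\bsm 1&\Z\\0&1\esm\gs_\fa^{-1}$, which has index $2$ in $\G_\fa$ when $-I\in\G$ (since $-I$ fixes the cusp but sends $\gs_\fa\be_1$ to $-\gs_\fa\be_1$), so the sum must run over $\G$ modulo that unipotent stabilizer rather than over $\G/\G_\fa$ — this is precisely where your factor of $2$ enters and is easy to get wrong. For the extension to Borel $f$, the paper proves a separate coarea lemma (\cref{lem:null}) showing that Lebesgue-a.e.\ convergence $f_k\to f$ forces $\mu$-a.e.\ convergence of the theta transforms, then applies dominated and monotone convergence; your sandwich argument is a legitimate alternative that bypasses that lemma entirely.

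One correction to the sandwich step: you cannot in general find semicontinuous $u\le f\le v$ \emph{agreeing with $f$ up to null sets}. For instance, if $f=\one_A$ with $A$ compact of positive measure and empty interior, any lower semicontinuous $v\ge\one_A$ has $\{v>1/2\}$ open and containing $A$, hence of measure strictly greater than $|A|$, so $v$ cannot equal $\one_A$ a.e. What your argument actually needs is the Vitali--Carath\'eodory theorem: for integrable $f$ and $\eps>0$ there are $u$ upper semicontinuous bounded above and $v$ lower semicontinuous bounded below with $u\le f\le v$ and $\int(v-u)<\eps$; monotonicity of $f\mapsto\Theta_{\Lambda;f}$ together with \cref{repthm} then squeezes $\int_{G/\G}\Theta_{\Lambda;f}\,d\mu$ between $c_\G\int u$ and $c_\G\int v$ (measurability of $\Theta_{\Lambda;f}$ holds since it is a countable sum of Borel functions). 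With that fix, and with the full Haar computation actually carried out, your route closes.
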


\begin{rmk}
For a general (i.e., not scaled) discrete $\G$-orbit $\Lambda$, the constant $c_\G$ is simply replaced by $c_\G/\lambda^2$ with $\lambda$ as defined in \cref{GJ}.
\end{rmk} 

For completeness, we include a proof of Veech's theorem. The statement is fairly immediate for semicontinuous functions, but the extension to integrable Borel functions stated by Veech actually requires the following technical lemma. Here $\lambda_n$ denotes the Lebesgue measure on $\R^n$.

\begin{lm} \label{lem:null}
If $\{f_{k}\}_{k=1}^{\infty}, f$ are Borel functions in $L^{1}(\R^{2},\lambda_2)$ with $f_{k}(\by) \to f(\by)$ for $\lambda$-a.e.~$\by$, then $\Theta_{\Lambda; f_k}(g) \to \Theta_{\Lambda;f}(g)$ for $\mu$-a.e.~$g \in G/\G$. 
\end{lm}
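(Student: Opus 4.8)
The plan is to reduce the claim to a statement about a single "bad set" in $\R^2$ of Lebesgue measure zero, namely the set $Z$ where $f_k$ fails to converge to $f$, together with the set where these functions are not all finite. By replacing $f_k$ by $f_k - f$ we may assume $f \equiv 0$ and $f_k(\by)\to 0$ for $\lambda$-a.e.~$\by$; let $Z = \{\by \in \R^2 : f_k(\by) \not\to 0\}$, so $\lambda(Z)=0$. The key observation is that $\Theta_{\Lambda;f_k}(g) = \sum_{\bx\in\Lambda} f_k(g\bx)$ is a \emph{countable} sum, so if $g\Lambda \cap Z = \emptyset$ then every term $f_k(g\bx)\to 0$; but a countable sum of sequences each going to $0$ need not itself go to $0$, so pointwise convergence of the terms is not enough. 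This is exactly why one needs an integrability/domination input rather than a purely measure-theoretic one, and it is the main obstacle.

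To get around it, first note that by the already-established \cref{highbound} (or rather Veech's \cite[Lemma 16.10]{Veech98} bound which it invokes), for any bounded compactly supported $h \geq 0$ the transform $\Theta_{\Lambda;h}$ is uniformly bounded on $G/\G$; more importantly, by Veech's mean value formula \eqref{eq:SV} applied to nonnegative $f_k$ (or $|f_k|$), $\int_{G/\G}\Theta_{\Lambda;|f_k|}\,d\mu = c_\G \int_{\R^2}|f_k|\,d\lambda$, which is finite for each $k$. The natural route is then to \emph{not} prove convergence for each fixed sequence but to extract it from an $L^1$ bound: pass to a subsequence along which $\int_{\R^2}|f_{k_j} - f|\,d\lambda \to 0$ fast enough (say summably) — if this is available — so that $\sum_j \int_{G/\G}\Theta_{\Lambda;|f_{k_j}-f|}\,d\mu = c_\G \sum_j \|f_{k_j}-f\|_1 < \infty$, whence by monotone convergence $\sum_j \Theta_{\Lambda;|f_{k_j}-f|}(g) < \infty$ for $\mu$-a.e.~$g$, forcing $\Theta_{\Lambda;f_{k_j}}(g)\to\Theta_{\Lambda;f}(g)$ a.e. However, the hypothesis as stated gives only $\lambda$-a.e.~pointwise convergence of $f_k$, not $L^1$ convergence, so this shortcut is not directly licensed.

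Consequently I expect the actual argument to proceed via a \emph{dominating function}: since the $f_k$ and $f$ lie in $L^1(\R^2)$, one works with $F = \sup_k |f_k - f|$, or more carefully with truncations, and uses that $\Theta_{\Lambda;|f_k-f|}(g) \to 0$ will follow from dominated convergence \emph{on the measure space} $(\Lambda, \text{counting measure})$ \emph{for each fixed $g$} \emph{provided} $\sum_{\bx\in\Lambda} \sup_k|f_k(g\bx)-f(g\bx)|<\infty$, i.e.~provided $\Theta_{\Lambda;F}(g)<\infty$. So the real content is: (i) show $\Theta_{\Lambda;F}\in L^1(G/\G)$, hence finite $\mu$-a.e., which should follow from \eqref{eq:SV}-type reasoning once one knows $F\in L^1(\R^2)$ or can be arranged to be so by first reducing to a uniformly integrable / dominated setting (e.g.~truncating the $f_k$ and using an $\eps/3$ argument splitting off the tails); and (ii) for $\mu$-a.e.~$g$ with $\Theta_{\Lambda;F}(g)<\infty$ and $g\Lambda\cap Z=\emptyset$ — the latter holding for a.e.~$g$ because $\int_{G/\G}\Theta_{\Lambda;\one_Z}\,d\mu = c_\G\lambda(Z)=0$ by \eqref{eq:SV}, so $\Theta_{\Lambda;\one_Z}=0$ a.e., i.e.~$g\Lambda\cap Z=\emptyset$ a.e.~$g$ — apply the dominated convergence theorem to the sum over $\bx\in\Lambda$. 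The main obstacle, as flagged, is manufacturing the dominating function $F$ with $\Theta_{\Lambda;F}$ finite a.e.\ from the bare hypothesis "$f_k\to f$ $\lambda$-a.e., all in $L^1$"; I anticipate this requires an extraction-of-subsequence step (it suffices to prove the convergence along every subsequence of a subsequence) combined with Egorov/uniform-integrability control of the tails, so that the final statement is reduced to the dominated case where the above two-part argument closes it.
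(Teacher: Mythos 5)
Your proposal and the paper's proof agree on the easy half of the argument and part ways on the hard half. The paper's proof consists entirely of showing that for $\mu$-a.e.\ $g$ one has $g\bx\notin Y:=\{\by:f_k(\by)\not\to f(\by)\}$ for \emph{every} $\bx\in\Lambda$; it does this with the coarea formula, checking that $(a,b,s)\mapsto g(a,b,s)\bx$ has full-rank derivative for $\bx\neq\bo$, so that the preimage of the Lebesgue-null set $Y$ is Haar-null, and then taking a countable union over $\bx\in\Lambda$. Your route to the same statement --- cover $Y$ by open sets $U$ of arbitrarily small measure and use $\int_{G/\G}\Theta_{\Lambda;\one_U}\,d\mu=c_\G\lambda(U)$ --- is equally valid and arguably simpler, provided you invoke \eqref{eq:SV} only for lower semicontinuous functions such as $\one_U$ (which is all that is available at this stage; citing \eqref{eq:SV} for general $L^1$ functions would be circular, since \cref{lem:null} is an ingredient in that extension).

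The obstacle you flag is genuine, and it is exactly the step the paper leaves implicit: for an infinite sum, $f_k(g\bx)\to f(g\bx)$ for every $\bx\in\Lambda$ does not yield $\Theta_{\Lambda;f_k}(g)\to\Theta_{\Lambda;f}(g)$ without some domination, and the bare hypothesis that the $f_k$ lie in $L^1$ and converge a.e.\ supplies none. Your proposal does not close this gap (as you acknowledge), but neither does the paper's proof, whose asserted inclusion $\mathscr{Y}\subseteq\bigcup_{\bx\in\Lambda}\{g\bx\in Y\}$ is precisely the unproved interchange of limit and sum. The lemma is rescued in all of its actual uses in the proof of \cref{Veech}: there the $f_k$ are either uniformly bounded with a common compact support, so that for each $g$ only finitely many $\bx\in\Lambda$ contribute and termwise convergence suffices, or the convergence $f_k\uparrow f$ is monotone, so that Tonelli justifies the interchange. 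Under either of these extra hypotheses --- or under your proposed domination $\Theta_{\Lambda;\sup_k|f_k-f|}(g)<\infty$ a.e.\ --- the statement ``for a.e.\ $g$, $g\Lambda\cap Y=\emptyset$,'' which both you and the paper establish by different means, does finish the proof; without one of them the lemma in the stated generality requires an additional argument that neither your write-up nor the paper provides.
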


\begin{proof}
Recall the definition (\ref{mathscr-S}) of the set $\mathscr{S}$ and let $\mathscr{Y}\subset \mathscr{S}$ correspond to $\mathscr{G}= \{ g\in G: \Theta_{\Lambda;f_k}(g)\not\to\Theta_{\Lambda;f}(g)\}$ up to a set of measure zero. Let $Y=\{\by\in\R^2: f_k(\by)\not\to f(\by)\}$. The set $\mathscr{Y}$ is contained in 
\begin{align*}
\bigcup_{\bx\in\Lambda} \{ &g(a,b,s) \bx\in Y\}\\
& = \bigcup_{\bx\in\Lambda}\bigcup_{k\geq1} \{g(a,b,s)\bx\in Y: a\in[-2^k, -2^{-k}] \cup[2^{-k},2^k],\ (b,s)\in[-2^k,2^k]^2\}.
\end{align*}
We denote the latter sets by $\mathscr{X}_\bx^k \subseteq \R^3$. Fix $\bx\in\R^2\setminus\{\bo\}$, and $k\in\N$. We will show that 
\[
\lambda_3(\mathscr{X}_\bx^k)=0.
\] 
By the coarea formula for Lipschitz functions \cite{Federer} and our hypothesis $\lambda_2(Y)=0$, for $J$ the Jacobian of the map $(a,b,c) \mapsto g(a,b,c)$ and $\mathcal{H}^1$ the Hausdorff 1-measure, we have
\[
\int_{\mathscr{X}_\bx^k} J(g(a,b,s)\bx) d\lambda_3(a,b,s) = \int_{Y} \mathscr{H}^1(\{a,b,s): g(a,b,s)\bx=\by\})d\lambda_2(\by) = 0.
\]
Hence $\lambda_3(\mathscr{X}_\bx^k)=0$ if and only if $\{J(g\bx)=0\}\cap\mathscr{X}_\bx^k$ has measure zero. In fact, we can directly compute that 
\[D(g(\cdot,\cdot,\cdot)\bx)(a,b,s) = \begin{pmatrix} x_1 &x_2 & 0\\ sx_1- a^{-2}x_2 & sx_2 & ax_1 + bx_2\end{pmatrix}.\]
Since $\bx \neq \bo$, and $a\neq 0$, we in fact have $D(g\bx)$ has rank $2$, so $D(g\bx) [D(g\bx)]^T$ has full rank and hence   the Jacobian $(J(g\bx))^2 = \det(D(g\bx) [D(g\bx)]^T) \neq 0$. 

We have now shown $\lambda_3(\mathscr{X}_\bx^k)=0$, which implies that $\eta(\mathscr{G})=0$. To conclude the proof, we note that since $\mu$ is the probability measure when $\eta$ is restricted to a finite volume fundamental domain associated to $G/\G$, we in fact have convergence $\Theta_{\Lambda;f_k}(g) \to\Theta_{\Lambda;f}(g)$ for $\mu$-a.e. $g\in G/\G$.
\end{proof}

\begin{proof}[Proof of \cref{Veech}]
Let $f\in SC(\R^2)$. Then \cref{repthm} applies, and we seek to determine $\nu$. The plane $\R^2$ decomposes into the disjoint $G$-orbits $\{\bo\}\cup(\R^2\setminus\{\bo\})$ and hence the only $G$-invariant measures on $\R^2$ (up to scaling) are $\delta_0$, the point mass at the origin, and the Lebesgue measure $ \lambda$. Since the Haar measure on $G$ is unique up to scaling, we conclude that $\nu= \alpha \delta_0 + \beta \lambda$ for some real constants $\alpha, \beta\geq0$. 

We first show that $\alpha =0$. Indeed since $f = \one_{\bo}$ is a characteristic function of a closed set, then $\Lambda\subset\R^2\bk\{\bo\}$ implies that $\alpha = 0$. 

Let $f$ be the characteristic function of the open disk of radius $R$ centered at the origin, and recall that characteristic functions of open sets are lower semi-continuous. Hence for $R>0$, we are left with
\[
\beta = \int_{G/\G} \frac{|g\Lambda\cap B_R|}{|B_R|}d\mu(g).
\]
Letting $R\to\infty$, we conclude with \cref{BNRW} that $\beta=c_\Gamma$. 

To extend the statement from semicontinuous functions to integrable Borel functions, we first consider the class of bounded Borel functions with compact support. With \cref{lem:null} in hand, we can use the dominated convergence theorem (where pointwise almost everywhere convergence is needed on both the left and right hand side of (\ref{eq:SV}))  to bootstrap our way to any integrable Borel function.
By \cref{highbound} and \cref{lem:null}, we apply dominated convergence on both sides of \cref{eq:SV} to a sequence of semi-continuous functions converging a.e. to any bounded Borel function with compact support. 

We now extend to all nonnegative Borel functions. Consider $A_k=\{|\bx|\leq 2^k\}\cap\{f(\bx)\leq 2^k\}$ and define $
f_k(\bx)\defeq f(\bx) \one_{A_k}(\bx).$ Monotone convergence implies that 
\[
\lim_{k\to\infty} \int_{\R^2} f_k(\bx)d\bx\ = \int_{\R^2} f(\bx)d\bx
\]
(even when both sides are infinite). Each $f_k(\bx)$ is a bounded Borel function with compact support, hence $L^1$ so that (\ref{eq:SV}) holds for each $k$. Clearly $f_k\leq f_{k+1}$ implies that $\Theta_{\Lambda;f_k}\leq \Theta_{\Lambda;f_{k+1}}$ for each $k$. Hence, by \cref{lem:null} and monotone convergence, we have
\[
\int_{G/\G}\Theta_{\Lambda;f}(g)d\mu(g) = \lim_{k\to\infty} \int_{G/\G} \Theta_{\Lambda;f_k}(g)d\mu(g)\ =\ c_\Gamma \lim_{k\to\infty} \int_{\R^2} f_k(\bx)d\bx = c_\G \int_{\R^2} f(\bx)d\bx.
\]
Finally for any integrable Borel function $f$, we decompose $f$ into its positive and negative parts to complete the proof of \cref{Veech}.
\end{proof}

\section{Admissible determinants}\label{sec:Admissible}

Our goal is to extend \cref{Veech} to an integral formula for pairs of discrete lattice orbits. Similarly to the proof of Veech, our formula will build on the decomposition of the space $\R^2\times\R^2$ into the union of disjoint $G$-orbits given by
\begin{align*}
\R^2\times\R^2 &= \{\bo\}\cup(\{\bo\}\times\R^2)\cup (\R^2\times\{\bo\})\cup\{(\bx,\by):\det(\bx\mid\by)=0\}\cup \{(\bx,\by): \det(\bx\mid\by)\neq0\}\\
&= G(\bo,\bo) \cup G(\bo,\be_1)\cup G(\be_1,\bo)\cup \{G(\be_1,t\be_1) : t\in\R\} \cup \{G(\be_1, c\be_2) : c\in\R^*\}.
\end{align*}
where we write $\det(\bx\mid\by)$ to be the determinant of the matrix with ordered columns $\bx$, $\by$. In particular, we will need to understand the set of determinants $\det(\bx\mid\by)$ arising from ordered pairs $(\bx,\by)\in\Lambda_1\times\Lambda_2$. We record necessary preliminary results in this section. The mean value formula for pairs of discrete lattice orbits will be derived in the next section.

In this section, we fix two (not necessarily distinct) scaled discrete lattice orbits $\Lambda_\fa$, $\Lambda_\fb$. Let 
\[
\cN_\fab =\{c\in\R: \text{there are } \bx\in\Lambda_\fa, \by\in\Lambda_\fb \text{ such that } \det(\bx\mid\by)=c\}
\]
be the set of {\bf admissible determinants} for vector pairs in $\Lambda_\fa\times\Lambda_\fb$. Under the assumption that $-I\in\G$, we have $c\in\cN_\fab$ if and only if $-c\in\cN_\fab$.

\begin{lm}\label{determinants and cosets}
The following statements are equivalent:
\begin{enumerate}
\item
$0\in\cN_\fab$;
\item
$\fa$ and $\fb$ are equivalent cusps;
\item
$\Lambda_\fa=\Lambda_\fb$. 
\end{enumerate}
\end{lm}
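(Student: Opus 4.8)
\textbf{Proof plan for \cref{determinants and cosets}.}

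The plan is to prove the chain of implications $(3)\Rightarrow(1)\Rightarrow(2)\Rightarrow(3)$, which closes the loop. The implication $(3)\Rightarrow(1)$ should be essentially immediate: if $\Lambda_\fa=\Lambda_\fb$, then picking any $\bx\in\Lambda_\fa$, the vector $\bx$ itself lies in $\Lambda_\fb$, and $\det(\bx\mid\bx)=0$, so $0\in\cN_\fab$. (If one prefers not to use the diagonal, one can instead note that $\Lambda_\fa$, being a discrete orbit attached to a cusp, contains two distinct collinear vectors — e.g.\ $\gs_\fa\be_1$ and its image under the cusp generator $\gs_\fa n_1\gs_\fa^{-1}$, which is again collinear to $\gs_\fa\be_1$ — but the diagonal argument is cleaner.)

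For $(1)\Rightarrow(2)$: suppose $\det(\bx\mid\by)=0$ for some $\bx\in\Lambda_\fa=\G\gs_\fa\be_1$ and $\by\in\Lambda_\fb=\G\gs_\fb\be_1$. Then $\bx$ and $\by$ are collinear. Write $\bx=\g_1\gs_\fa\be_1$, $\by=\g_2\gs_\fb\be_1$; collinearity of $\bx$ and $\by$ is equivalent (by \cref{lm:cusp gps}, translating between vectors $\bx_t$ and the points $t\in\R\cup\{\infty\}$) to the statement that $\gs_\fa^{-1}\g_1^{-1}\g_2\gs_\fb$ fixes $\infty$, i.e.\ is upper triangular. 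Since $\g_1^{-1}\g_2\in\G$, this says $\gs_\fa^{-1}\G\gs_\fb$ contains an element fixing $\infty$. Equivalently, $\G$ contains an element sending $\fb=\gs_\fb.\infty$ to $\fa=\gs_\fa.\infty$, which is exactly the assertion that $\fa$ and $\fb$ are $\G$-equivalent cusps. (Concretely one can also read this off the double coset decomposition in \cref{DCD}: a pair with determinant $0$ corresponds precisely to the degenerate coset $\delta_\fab\bsm 1&\Z\\0&1\esm$, which is present exactly when $\delta_\fab=1$.)

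For $(2)\Rightarrow(3)$: if $\fa$ and $\fb$ are equivalent, say $\fb=\g.\fa$ for some $\g\in\G$, then by \cref{lm:cusp gps}(2) the vectors $\g\bx_\fa$ and $\bx_{\g.\fa}=\bx_\fb$ are collinear, so the orbits $\G\bx_\fa$ and $\G\bx_\fb$ are homothetic; but since both $\Lambda_\fa$ and $\Lambda_\fb$ are the \emph{scaled} orbits attached to their respective cusps, and the scaled orbit depends only on the cusp up to equivalence (as observed right after \eqref{scaled DLO}, and as shown inside the proof of \cref{prop:homothety} where it is checked that $\G\bx_\fa$ and $\G\bx_\fb$ are homothetic when $\fa\sim\fb$, together with disjointness of the distinct scaled orbits), we get $\Lambda_\fa=\Lambda_\fb$ on the nose. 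Indeed \cref{prop:homothety} already records that distinct inequivalent cusps give disjoint orbits and equivalent cusps give the same scaled orbit, so $(2)\Rightarrow(3)$ is really just a citation of that proposition.

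The only place that requires genuine care — the ``main obstacle,'' such as it is — is making the homothety in $(2)\Rightarrow(3)$ into an \emph{equality} rather than merely proportionality: one must invoke the normalization built into the definition \eqref{scaled DLO} of the scaled orbit and the fact (already established in the proof of \cref{prop:homothety}) that the scaled orbit is well defined independently of the choice of cusp representative and of scaling transformation $\gs_\fa$. Everything else is a routine unwinding of \cref{lm:cusp gps} and the definitions.
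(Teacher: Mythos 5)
Your proposal is correct and follows essentially the same route as the paper: the same cycle of implications (the paper writes it as $(1)\Rightarrow(2)\Rightarrow(3)\Rightarrow(1)$), with $(1)\Rightarrow(2)$ deduced from collinearity via \cref{lm:cusp gps}, $(2)\Rightarrow(3)$ from the observation that $\gs_\fb$ may be taken to be $\g\gs_\fa$ so that the scaled orbits coincide on the nose, and $(3)\Rightarrow(1)$ immediate. You correctly identify the only delicate point (upgrading homothety to equality of the scaled orbits), which the paper handles with the same normalization remark.
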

\begin{proof}
If  $0\in\cN_\fab$, there is $\g\in\G$ such that $\g\bx_\fa$ and $\bx_\fb$ are collinear, and hence by \cref{lm:cusp gps}, the cusps $\fa$ and $\fb$ are equivalent. Then $\Lambda_\fa=\Lambda_\fb$ (we may take $\gs_\fb=\g\gs_\fa$) and this in turns implies immediately that $0\in \cN_\fab$.
\end{proof}

We introduce an equivalence relation on the set of ordered pairs $(\bx,\by)\in\Lambda_\fa\times\Lambda_\fb$ by declaring $(\bx_1,\by_1)\sim(\bx_2,\by_2)$ iff $(\bx_1,\by_1)=(\g\bx_1,\g\bx_2)$ for some $\g\in\G$. The determinant $\det(\bx\mid \by)$ is preserved by the equivalence relation. We set
\begin{align*}
\varphi_\fab(c) &= |\{[(\bx,\by)]: (\bx,\by)\in\Lambda_\fa\times\Lambda_\fb,\, \det(\bx\mid\by)=c\}|\end{align*}
to be the counting function of equivalence pairs with determinant $c$.

\begin{lm}\label{lemma2}
Suppose that $-I\in\G$ and let $\cN_\fab^*\coloneqq \cN_\fab -\{0\}.$ Then $c\in\cN_\fab^*$ if and only if there exists a vector $\bsm a\\ c\esm\in(\gs_\fa^{-1}\G\gs_\fb)\be_1$ with $0<a\leq |c|$. In particular
\[
\varphi_\fab(c) = \left|\left\{ 0<a\leq |c|: \bpm a\\ c\epm \in (\gs_\fa^{-1}\G\gs_\fb)\be_1\right\}\right|.
\]
\end{lm}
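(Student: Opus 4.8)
The plan is to translate the determinant condition $\det(\bx\mid\by)=c$ with $\bx\in\Lambda_\fa$, $\by\in\Lambda_\fb$ into a statement about the orbit of $\be_1$ under the double-coset set $\gs_\fa^{-1}\G\gs_\fb$, and then use the double coset decomposition of \cref{DCD} together with \cref{Miyake} to pin down a unique representative with $0<a\leq|c|$. First I would write $\bx=\g_1\gs_\fa\be_1$ and $\by=\g_2\gs_\fb\be_1$ for some $\g_1,\g_2\in\G$. Since the determinant (equivalently the wedge $\bx\wedge\by$) is $\SL_2(\R)$-invariant, acting on the left by $\g_1^{-1}$ (which lies in $\G$, hence preserves the equivalence relation $\sim$ and the value $c$) I may assume $\bx=\gs_\fa\be_1$, so the condition becomes $\det(\gs_\fa\be_1\mid \g\gs_\fb\be_1)=c$ for $\g=\g_1^{-1}\g_2\in\G$, i.e. $\det(\be_1\mid \gs_\fa^{-1}\g\gs_\fb\be_1)=c$ after applying $\gs_\fa^{-1}$. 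Writing $\gs_\fa^{-1}\g\gs_\fb\be_1=\bsm a\\ c'\esm$, the determinant $\det(\be_1\mid \bsm a\\ c'\esm)=c'$, so $c'=c$: thus $c\in\cN_\fab^*$ exactly when $\bsm a\\ c\esm\in(\gs_\fa^{-1}\G\gs_\fb)\be_1$ for some $a$, with $c\neq0$ forcing this vector to be nonzero.

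Next I would use \cref{DCD} to control the second coordinate $a$. Since $-I\in\G$, the relevant parabolic subgroups have image $\bsm 1&\Z\\0&1\esm$ after conjugation by the scaling transformations, and any element $\bsm a&*\\ c&*\esm\in\gs_\fa^{-1}\G\gs_\fb$ with $c\neq0$ lies (by the Bruhat-type decomposition \eqref{dcd}) in a double coset $\bsm 1&\Z\\0&1\esm \bsm a_0&*\\ c&*\esm \bsm 1&\Z\\0&1\esm$ with $0<a_0\leq c$ (or $|c|$, adjusting signs using $-I\in\G$). Acting on $\be_1$, left multiplication by $n_k=\bsm 1&k\\0&1\esm$ sends $\bsm a\\ c\esm$ to $\bsm a+kc\\ c\esm$, so the set of first coordinates $a$ occurring with a fixed second coordinate $c$ is a coset of $c\Z$ inside $\R$; hence there is exactly one representative with $0<a\leq|c|$ in each such orbit — unless two different double cosets collapse. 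To rule that out I would invoke \cref{Miyake} (the "small $c$" criterion): if $\bsm a_1\\ c\esm$ and $\bsm a_2\\ c\esm$ both lie in $(\gs_\fa^{-1}\G\gs_\fb)\be_1$ with $0<a_i\leq|c|$, then the corresponding group elements differ on the left by an element of $\gs_\fa^{-1}\G\gs_\fa$ whose lower-left entry is $0$, which by \cref{Miyake} (applied at the normalized cusp $\fa$, where $\omega=1$) forces that element into $\gs_\fa^{-1}\G_\fa\gs_\fa=\bsm 1&\Z\\0&1\esm$, so $a_1\equiv a_2\pmod{c}$ and thus $a_1=a_2$. This simultaneously shows that the map $[(\bx,\by)]\mapsto (\text{the vector }\bsm a\\ c\esm)$ is a bijection from equivalence classes with determinant $c$ onto $\{0<a\leq|c|:\bsm a\\ c\esm\in(\gs_\fa^{-1}\G\gs_\fb)\be_1\}$, giving the stated formula for $\varphi_\fab(c)$.

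The main obstacle is the injectivity/well-definedness bookkeeping in the last step: one must check carefully that distinct equivalence classes $[(\bx,\by)]$ really do map to distinct vectors $\bsm a\\ c\esm$, which amounts to showing that if $\gs_\fa^{-1}\g\gs_\fb\be_1$ and $\gs_\fa^{-1}\g'\gs_\fb\be_1$ are equal (as vectors in the plane, i.e. up to nothing since $\be_1$ has last coordinate $1$ — actually genuinely equal), then $(\gs_\fa\be_1,\g\gs_\fb\be_1)$ and $(\gs_\fa\be_1,\g'\gs_\fb\be_1)$ are $\G$-equivalent, which requires that $\g^{-1}\g'$ stabilize both the line through $\gs_\fa\be_1$ (automatic, since we fixed the first component) and the vector $\gs_\fb\be_1$; disentangling the action on vectors versus on the line direction, and handling the $\pm$ ambiguity coming from $-I\in\G$, is where the care is needed, but it is routine given \cref{Miyake} and \cref{lm:cusp gps}.
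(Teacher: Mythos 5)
Your overall architecture is the same as the paper's: normalize the pair so that $\bx=\gs_\fa\be_1$, observe that the second vector then reads off as $\bsm a\\ c\esm\in(\gs_\fa^{-1}\G\gs_\fb)\be_1$ with $c$ the determinant, reduce $a$ modulo $c\Z$ using the unipotent $\bsm 1&\Z\\0&1\esm\subset\gs_\fa^{-1}\G_\fa\gs_\fa$, and match equivalence classes of pairs with residues $a\in(0,|c|]$. However, the step where you invoke \cref{Miyake} is misstated in a way that, taken literally, is false: you claim that if $\bsm a_1\\ c\esm$ and $\bsm a_2\\ c\esm$ both lie in $(\gs_\fa^{-1}\G\gs_\fb)\be_1$ with $0<a_i\leq|c|$, then $a_1=a_2$. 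That would force $\varphi_\fab(c)\leq 1$ for every $c$, contradicting the lemma itself (for $\SL_2(\Z)$ the count is Euler's totient $\varphi(|c|)$, which exceeds $1$ for $|c|\geq 3$). The error is in the assertion that the two group elements ``differ on the left by an element of $\gs_\fa^{-1}\G\gs_\fa$ whose lower-left entry is $0$'': an element of $\gs_\fa^{-1}\G\gs_\fa$ carrying $\bsm a_1\\ c\esm$ to $\bsm a_2\\ c\esm$ is subject only to $ra_1+sc=c$, which does not force $r=0$. Distinct double cosets really do produce distinct residues $a\in(0,|c|]$ for the same $c$, and that multiplicity is exactly what $\varphi_\fab(c)$ records.

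The correct place for the \cref{Miyake} argument is the \emph{well-definedness} of the residue class $a+c\Z$ on an equivalence class of pairs: the element $\g_1$ with $\bx=\g_1\gs_\fa\be_1$ is determined only up to right multiplication by the stabilizer of $\gs_\fa\be_1$ in $\G$, and conjugating that stabilizer by $\gs_\fa^{-1}$ gives elements of $\gs_\fa^{-1}\G\gs_\fa$ fixing the vector $\be_1$; these are upper-triangular unipotent (lower-left entry $0$), so \cref{Miyake} (with the normalized width $\omega=1$) places them in $\bsm 1&\Z\\0&1\esm$, whence the two resulting values of $a$ differ by a multiple of $c$. Conversely, if two normalized pairs yield $a$'s congruent mod $c$, the connecting element $\gs_\fa\bsm 1&m\\0&1\esm\gs_\fa^{-1}\in\G_\fa$ exhibits the pairs as equivalent, which is the (easy) injectivity you correctly sketch in your final paragraph. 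With the hypothesis of your Miyake step corrected from ``both vectors lie in the orbit'' to ``both vectors arise from the same equivalence class,'' your proof closes up and coincides with the paper's.
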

\begin{proof}
Let $(\bx,\by)$ be an ordered pair in $\Lambda_\fa\times\Lambda_\fb$ such that $\det(\bx\mid\by)=c$. We can choose $\bx'$ such that $(\bx\mid \bx')\in \G \gs_\fa$. Then 
\[
(\bx\mid \bx')^{-1}(\bx\mid\by) = \bpm 1& a\\ 0& c\epm.
\]
This shows that $\bsm a\\ c\esm\in \gs_\fa^{-1}\G\by =(\gs_\fa^{-1}\G\gs_\fb)\be_1$. Under the assumption that $-I\in\G$, we have $\bsm 1&1\\0&1\esm\in\gs_\fa^{-1}\G\gs_\fa$. By matrix multiplication on the left with an appropriate power $\bsm 1&m\\0&1\esm$, we have
\[
\bpm 1 & m\\0&1\epm \bpm a  \\c\epm  = \bpm a+mc \\ c\epm
\]
with $0< a+mc \leq |c|$. Hence we may always choose $a$ with this property. Moreover we recover the same vector for any equivalent ordered pair $(\bx_1,\by_1)$. This assignment is easily seen to be surjective. Indeed, given $\bsm a\\ c\esm\in(\gs_\fa^{-1}\G\gs_\fb)\be_1$, choose $\bx=\gs_\fa\be_1$ and $\by=\gs_\fa\bsm a\\ c\esm$. To prove that it is also injective, suppose that we have $(\bx_1\mid\by_1)=(\bx_1\mid \bx_1')(\bx_2\mid \bx'_2)^{-1}(\bx_2\mid \by_2)$. Then $(\bx_1\mid\by_1)$ and $(\bx_2\mid \by_2)$ are equivalent ordered pairs of vectors.
\end{proof}

\begin{rmk}
Observe that $\varphi_\fab(c)$ counts the number of double coset representatives with lower left entry given by $c$; compare to \cref{DCD}. 
\end{rmk}

When $\G=\SL_2(\Z)$, we may take $\gs_\infty=I$ and $\Lambda_\infty=\SL_2(\Z)\be_1$ for the unique cusp at infinity. Then each integer $n\in\Z$ is realized as an admissible determinant, and $\varphi_\infty(n)=\varphi(|n|)$, Euler's totient function, which is trivially bounded above by $\varphi(n)\leq n-1$ for each $n\in\N$. For more general groups, and in particular groups that are not finite-index subgroups of $\SL_2(\Z)$, one only has the estimate $|\varphi_\fab(c)|\ll |c|^2$ as consequence of the pigeonhole principle; see \cite[Proposition 2.8]{Iwa}. However, we have stronger bounds on average via the scattering matrix for $\G$, which we quickly recall. If $h$ is the number of inequivalent cusps for $\G$, the scattering matrix $(\phi_\fab(s))_{\fa,\fb}$ for $\G$ is the $h\times h$ matrix with entries given by
\[
\phi_\fab(s) = \sqrt\pi \frac{\G(s-1/2)}{\G(s)} \sum_{\substack{c\in\cN_\fab^*\\c>0}} \frac{\varphi_\fab(c)}{c^{2s}},
\]
where $s\in\C$ and $\G(s)$ is Euler's gamma-function. Each entry converges absolutely and uniformly on compact subsets in the half-plane $\re(s)>1$ and admits a meromorphic continuation to the whole complex plane. (See \cite{Iwa} for these facts as well as the ones used in the proof below.)

\begin{thm}\label{Good}
Let $\G<G$ be a nonuniform lattice containing $-I$ with cusps $\fa, \fb$. There are constants $1>\rho_1>\dots>\rho_k>\tfrac12$ and $c_1,\dots,c_k$ such that
\[
\sum_{\substack{c\in\mathcal{N}^*_\fab\\ 0<c< T}} \varphi_\fab(c) = \frac{c_\G}{2}  T^2 + c_1 T^{2\rho_1} + \dots +c_k T^{2\rho_k} + O(T^{4/3})
\]
as $T\to\infty$.
\end{thm}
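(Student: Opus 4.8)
The plan is to identify the sum as the summatory function of the Dirichlet coefficients of an entry of the scattering matrix for $\G$, and to extract the asymptotic by a contour shift. First I would record that, writing
$$
D_\fab(s):=\sum_{\substack{c\in\cN_\fab^*\\ c>0}}\frac{\varphi_\fab(c)}{c^{2s}},
$$
the very definition of $\phi_\fab$ gives $\phi_\fab(s)=\sqrt\pi\,\frac{\G(s-\tfrac12)}{\G(s)}D_\fab(s)$, so that $D_\fab(s)=\frac{\G(s)}{\sqrt\pi\,\G(s-\tfrac12)}\phi_\fab(s)$, and the series defining $D_\fab$ converges absolutely for $\re s>1$. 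The convergence is immediate from the crude bound $\sum_{0<c<T}\varphi_\fab(c)\ll T^2$: indeed, by \cref{lemma2} this sum counts the vectors $\bsm a\\ c\esm\in(\gs_\fa^{-1}\G\gs_\fb)\be_1=\gs_\fa^{-1}\Lambda_\fb$ with $0<a\le c<T$, all of which lie in a disk of radius $O(T)$, and $\gs_\fa^{-1}\Lambda_\fb$ is a linear image of the scaled discrete $\G$-orbit $\Lambda_\fb$, so \cref{BNRW} applies.

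Next I would invoke the standard analytic structure of the scattering matrix $\Phi(s)=(\phi_\fab(s))$ from the spectral theory of automorphic forms (see \cite{Iwa}): it continues meromorphically to $\C$; in the half-plane $\re s\ge\tfrac12$ its only poles are simple, at $s=1$ and at finitely many points $1>\rho_1>\dots>\rho_k>\tfrac12$ (and possibly at $s=\tfrac12$), where $\rho_i(1-\rho_i)$ ranges over the residual eigenvalues of the Laplacian; $\mathrm{Res}_{s=1}\phi_\fab(s)=V^{-1}$ for every pair of cusps; the matrix $\Phi(\tfrac12+it)$ is unitary, whence in particular $\int_0^U|\phi_\fab(\tfrac12+it)|^2\,dt\le U$; and $\Phi$ grows at most polynomially on vertical lines away from its poles. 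Multiplying by the gamma factor and using $\G(1)/\G(\tfrac12)=\pi^{-1/2}$ gives that $D_\fab$ is holomorphic for $\re s\ge\tfrac12$ apart from simple poles at the $\rho_i$ and at $s=1$, with $\mathrm{Res}_{s=1}D_\fab(s)=\tfrac1{\pi V}=\tfrac{c_\G}2$ (here we use $-I\in\G$).

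The asymptotic then follows by Perron's formula. With $b(T)=\sum_{0<c<T}\varphi_\fab(c)$ one has $b(T)=\tfrac1{2\pi i}\int_{(\sigma)}D_\fab(s)\,\tfrac{T^{2s}}{s}\,ds$ for $\sigma>1$, and I would shift the contour to $\re s=\tfrac23$. This produces the residue at $s=1$, contributing $\tfrac{c_\G}2T^2$, and the residues at those $\rho_i>\tfrac23$, contributing terms $c_iT^{2\rho_i}$ with $c_i=\rho_i^{-1}\mathrm{Res}_{s=\rho_i}D_\fab(s)$; any $\rho_i\le\tfrac23$, together with a possible pole at $s=\tfrac12$, contribute $O(T^{4/3})$ and go into the error term. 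The main obstacle — the technical heart of the proof — is to bound the shifted integral, and to handle the unavoidable truncation/de-smoothing, by $O(T^{4/3})$. Here one should not use a sharp cutoff: one interposes a smooth (Bessel-type) weight whose Mellin transform tempers the $|t|^{1/2}$ growth of the gamma factor, estimates the resulting integral along $\re s=\tfrac12$ using the mean-square bound $\int_0^U|\phi_\fab(\tfrac12+it)|^2\,dt\le U$ furnished by unitarity of $\Phi$, and then removes the weight using the trivial bound $b(T)\ll T^2$. This is precisely the estimate that produces the exponent $\tfrac23$ in the hyperbolic lattice-point counting problem, and extracting a clean $O(T^{4/3})$ (rather than $O(T^{4/3+\eps})$) is the delicate point.

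Finally, I would remark that the statement is a special case of a more general result of Good \cite{Good}, and that it can alternatively be obtained by running the argument of \cite[Theorem 4.1]{BNRW} directly on the discrete orbit $\gs_\fa^{-1}\Lambda_\fb$ and the triangular region $\{\bsm a\\ c\esm:0<a\le c<T\}$ in place of a Euclidean ball — the factor $\tfrac12$ in the main term then being the area $\tfrac12T^2$ of that triangle, consistently with Veech's mean value formula (\cref{Veech}).
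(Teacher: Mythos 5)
Your proposal follows essentially the same route as the paper: Perron's formula applied to the Dirichlet series $\sum_{c>0}\varphi_\fab(c)c^{-2s}$ sitting inside the scattering-matrix entry $\phi_\fab(s)$, a contour shift past $\re(s)=1$ picking up the residue $\tfrac{c_\G}{2}T^2$ at $s=1$ and the secondary terms at the exceptional poles $\rho_i$, with Stirling/unitarity controlling the shifted integral. You are also right that the sharp cutoff only yields $O(T^{4/3+\eps})$ — the paper's own direct proof says exactly this and, like you, defers the clean $O(T^{4/3})$ to Good's theorem.
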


\begin{proof}
The asymptotic is a special case of a more general theorem of Good (see \cite[Theorem 4, p.~116]{Good}) on counting functions over double cosets of Fuchsian groups. We include a much simpler and direct proof for the convenience of the reader that recovers the weaker error growth rate $O(T^{4/3+\eps})$. 

We will need the following facts pertaining to the meromorphic continuation of the entries of the scattering matrix, which we also will denote by $\phi_\fab(s)$. We record that $\phi_\fab(s)$ is holomorphic for $\re(s)\geq1/2$ except for at most finitely many poles $\rho_0, \rho_1,\dots,\rho_k$, all simple, and all lying on the real segment $(\tfrac12,1]$ with $\rho_0=1$ whose residue is
\[
\underset{s=1}{{\rm Res}}\, \phi_\fab(s) = \frac{1}{V}.
\]

We will also rely on the following standard complex analysis computation; uniformly for $y ,\gs>0$ and $X\geq1$ we have
\[
\frac{1}{2\pi i} \int_{\gs-iX}^{\gs+iX} y^s \frac{ds}{s} = \begin{dcases} 1+O\op{\tfrac{y^{\gs}}{X|\log y|}} &\text{ if } y>1,\\ O\op{\tfrac{y^{\gs}}{X\log y}} &\text{ if } y<1.
\end{dcases}
\]
We may assume that $T\not\in\cN_\fab$ and let $\gs>1$. Then 
\begin{align*}
\frac{\pi^{-1/2}}{2\pi i}\int_{\gs-iX}^{\gs+iX} \frac{\G(s)}{\G(s-1/2)}\phi_\fab(s)\frac{T^{2s}}{s} ds 
= \sum_{\substack{c\in\cN_\fab\\ 0<c<T}} \varphi_\fab(c) + O\op{\frac{T^{2\gs}}{X}}.
\end{align*}
Applying Stirling's estimate for the gamma-function and the Phr\'agmen--Lindel\"of principle we see that the integrand on the LHS is of order $O\op{\tfrac{T^{2\gs}}{|t|^{1/2}}}$ for all $s\in\C$ with $\re(s)\in[1/2,\gs]$ and $|t|\geq1$. To compute the integral on the LHS, we apply Cauchy's residue theorem to the rectangle with sides $[\gs-iX,\gs+iX],\, [\gs+iX,1/2+iX],\, [1/2+iX,1/2-iX],\, [1/2-iX,\gs-iX]$, and find that the LHS is
\[
= \sum_{i=0}^k \frac{\G(\rho_i)\pi^{-1/2}}{\rho_i \G(\rho_i-1/2)}{\rm Res}_{s=\rho_i}\phi_\fab(s) T^{2\rho_i} + O\op{TX^{1/2} + X^{-1/2}\frac{T^{2\gs}}{\log T}}.
\] 
Combining these results, we have 
\[
 \sum_{\substack{c\in\cN_\fab\\ 0<c<T}} \varphi_\fab(c) = \frac{T^2}{\pi V} + c_1 T^{2\rho_1} +\dots + c_k T^{2\rho_k} + O\op{\frac{T^{2\gs}}{X} + TX^{1/2}+X^{-1/2} \frac{T^{2\gs}}{\log T}},
\]
for the appropriate choice of $c_1,\dots,c_k$. The error term is optimized by choosing $X=T^{\gs/3}$ and $\gs$ can be chosen as small as $\gs=1+\eps$ for any $\eps>0$.
\end{proof}


We will later require asymptotics for the following modification of this average. For $t>0$, let
\begin{align}\label{Phi}
\Phi_\fab(t) \coloneqq t \sum_{\substack{c\in\cN_\fab^*\\ c\geq t}} \frac{\varphi_\fab(c)}{c^3}.
\end{align}

\begin{coro}\label{asymptot}
Let $\delta=\min\{2(1-\rho_1),2/3\}\in(0,2/3]$ and $c_\Gamma$ defined as in \cref{c_Gamma}. Then
\[
\Phi_\fab(t) = \begin{dcases}  c_\G  +O\left(t^{-\delta}\right) & \text{ as }t\to\infty,\\
O(t) & \text{ as }t\to0.
\end{dcases}
\]
\end{coro}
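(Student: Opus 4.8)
The plan is to deduce both asymptotics from \cref{Good} by Abel (partial) summation against the counting function $A(T):=\sum_{c\in\cN_\fab^*,\,0<c<T}\varphi_\fab(c)$, for which \cref{Good} gives $A(T)=\tfrac{c_\G}{2}T^2+\sum_i c_iT^{2\rho_i}+O(T^{4/3})=\tfrac{c_\G}{2}T^2+O(T^{2-\delta})$, where $2-\delta=\max\{2\rho_1,4/3\}$ by \eqref{def:delta}. Summing the series in \eqref{Phi} by parts with $f(x)=x^{-3}$ over $t<c\le U$ yields
$$\sum_{t<c\le U}\frac{\varphi_\fab(c)}{c^3}=\frac{A(U)}{U^3}-\frac{A(t)}{t^3}+3\int_t^U\frac{A(x)}{x^4}\,dx .$$
Since $A(x)\sim\tfrac{c_\G}{2}x^2$, letting $U\to\infty$ kills the boundary term $A(U)U^{-3}$ and makes $\int_t^\infty A(x)x^{-4}\,dx$ convergent, so (accounting separately for a possible term $c=t$)
$$\Phi_\fab(t)=t\left(-\frac{A(t)}{t^3}+3\int_t^\infty\frac{A(x)}{x^4}\,dx\right)+\one_{t\in\cN_\fab^*}\,\frac{\varphi_\fab(t)}{t^2}.$$

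For $t\to\infty$ I substitute $A(x)=\tfrac{c_\G}{2}x^2+O(x^{2-\delta})$ into this identity. Then $A(t)t^{-3}=\tfrac{c_\G}{2t}+O(t^{-1-\delta})$, and integrating the main and error parts separately gives $3\int_t^\infty A(x)x^{-4}\,dx=\tfrac{3c_\G}{2t}+O(t^{-1-\delta})$; the two main terms combine to $\tfrac{c_\G}{t}$, so after multiplying by $t$ we get $\Phi_\fab(t)=c_\G+O(t^{-\delta})$. The endpoint term is harmless: applying \cref{Good} at $c^{\pm}$ and using that the smooth terms $\tfrac{c_\G}{2}x^2$ and $c_ix^{2\rho_i}$ do not contribute to a jump shows $\varphi_\fab(c)=A(c^{+})-A(c^{-})=O(c^{4/3})$, hence $\varphi_\fab(t)t^{-2}=O(t^{-2/3})=O(t^{-\delta})$ because $\delta\le 2/3$.

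For $t\to0$ I use positivity of $\varphi_\fab$: $\Phi_\fab(t)/t=\sum_{c\ge t}\varphi_\fab(c)c^{-3}\le B:=\sum_{c\in\cN_\fab^*}\varphi_\fab(c)c^{-3}$ for every $t>0$. This constant $B$ is finite because the orbit $\gs_\fa^{-1}\Lambda_\fb=\gs_\fa^{-1}\G\gs_\fb\be_1$ is a closed discrete subset of $\R^2$ avoiding the origin, so by \cref{lemma2} there is $c_0>0$ with $\cN_\fab^*\subset[c_0,\infty)$; thus $A(x)=0$ for $x<c_0$, and a final partial summation together with $A(x)=O(x^2)$ gives $B=3\int_{c_0}^\infty A(x)x^{-4}\,dx<\infty$. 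Hence $\Phi_\fab(t)\le Bt=O(t)$ as $t\to0$.

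There is no serious obstacle: the argument is entirely standard partial summation. The only points that need care are the bookkeeping of the lower-order terms in the first regime (in particular checking that all error contributions are $O(t^{-\delta})$ and that the $c=t$ endpoint is absorbed via $\varphi_\fab(c)=O(c^{4/3})$), and the verification that $B<\infty$ in the second regime, which reduces to the discreteness of the relevant lattice orbit.
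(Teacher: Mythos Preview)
Your proof is correct and follows essentially the same route as the paper: Abel summation against the counting function from \cref{Good} for the $t\to\infty$ regime, and finiteness of the full tail sum for the $t\to0$ regime. Your treatment is in fact slightly more careful than the paper's, which glosses over the endpoint term $c=t$ and, for the small-$t$ bound, simply asserts that $\varphi_\fab(c)\ll c^{2-\eps}$ implies convergence of the defining series; your explicit use of the lower bound $c_0>0$ on $\cN_\fab^*$ (via discreteness of $\gs_\fa^{-1}\Lambda_\fb$) makes that step watertight.
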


\begin{proof}
The first statement follows by Abel summation on \cref{Good}; indeed, we have for any $T\gg t$,
\[
\sum_{\substack{c\in\cN_\fab\\ t< c\leq T}} \frac{\varphi_\fab(c)}{c^3} = \sum_{0<c\leq T} \varphi_\fab(c) T^{-3} - \sum_{0<c\leq t} \varphi_\fab(c) t^{-3} + 3\int_t^T \sum_{0<c\leq u} \varphi_\fab(c) u^{-4}du.
\]
Letting $T\to\infty$, 
\[
\sum_{\substack{c\in\cN_\fab\\ t\leq c}} \frac{\varphi_\fab(c)}{c^3} = c_\G t^{-1} 
+ O\left(t^{-\delta}\right). 
\]
For the second statement, we use \cref{Good} and positivity to see $\varphi_\fab(c) \ll c^{2-\eps}$ for some $\eps>0$. Hence the sum defining $\Phi_\fab(t)$ is absolutely convergent.
\end{proof}


\section{Mean values formulas for pairs}\label{sec:2ndmoment1}
A formula for the second moment of the Siegel--Veech transform is derived in \cite{Fai21} when the lattice $\G=H_q$ is a Hecke triangle group. 
In this setting there is only one cusp class, represented by the cusp at $
\infty$ and we have $\Lambda:=\Lambda_\infty=\G\be_1$. Then for $f\in SC(\R^2\times\R^2)$ we have 
\begin{align*}
\int_{G/H_q} \Theta_{\Lambda,\Lambda;f}(g) d\mu(g) &=  c_{H_q}\sum_{c\in\mathcal{N}^*} \varphi(c) \int_G f(g\be_1,c g \be_2)d\eta(g)\\
& \qquad + c_{H_q} \int_{\R^2} (f(\bx,-\bx)+f(\bx,\bx))d\bx
\end{align*}
where $\cN=\cN_{\infty\infty}$, $\varphi=\varphi_{\infty\infty}$, 
and $\eta$ is the Haar measure on $G$ normalized so that $\eta_\ast(G/H_q)=c_{H_q}$ \cite[Theorem 1.2]{Fai21}. 

Opening up the measure $d\eta=da\,db\, ds$ under the coordinates in \cref{sec:Notation}, the inner integral becomes 
\begin{align*}
\iiint_{\substack{a,b,s\in \R \\ a \neq 0}}  f\left(\bsm a\\ as\esm, c\bsm b\\ bs+a^{-1}\esm\right) da\,db\, ds &=
\frac{1}{|c|} \iiint_{\substack{u,v,x\in \R \\ u \neq 0}}  f\left(\bsm u\\ v\esm, x\bsm u\\ v\esm + c\bsm 0  \\ u^{-1}\esm\right)dx\,dv\,du\\
&=\frac{1}{|c|}\int_\R\int_{\R^2} f(\bx,t\bx+c\bx^*)d\bx\, dt.
\end{align*}
under the change of coordinates $a=u$, $s=vu^{-1}$, $ux=c b$ which has Jacobian $1/|c|$, and for $\bx^*$ chosen such that $\det(\bx\, |\, \bx^*)=1$.

We can extend this identity from Hecke triangle groups to all non-uniform lattices and pairs of not necessarily homothetic discrete lattice orbits. The mean value formula in \cref{Thm1} follows from

\begin{thm}\label{2ndMoment}
Given two scaled discrete $\G$-orbits $\Lambda_\fa, \Lambda_\fb$, $f \in SC(\R^2\times \R^2)$,  we have 
\begin{align*}
\int_{G/\G} \Theta_{\Lambda_\fa,\Lambda_\fb;f}(g)d\mu(g)& = c_\Gamma \sum_{c\in\cN_\fab^*} \varphi_\fab(c) \int_G f(g\be_1,cg\be_2)d\eta(g)\\
&\qquad + \delta_\fab c_\Gamma \int_{\R^2} (f(\bx,-\bx)+f(\bx,\bx))d\bx,
\end{align*}
where $\delta_\fab=1$ if and only if $\Lambda_\fa$ and $\Lambda_\fb$ are homothetic.
\end{thm}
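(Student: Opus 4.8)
The plan is to compute the average by Veech's unfolding method \cite{Veech98} (as extended in \cite{Fai21}), the essential new input being the double coset decomposition of \cref{DCD}, whose representatives' lower-left entries are exactly the nonzero admissible determinants, counted with multiplicities $\varphi_\fab$. Fix scaling transformations $\gs_\fa,\gs_\fb\in G$, so $\Lambda_\fa=\G\gs_\fa\be_1$ and $\Lambda_\fb=\G\gs_\fb\be_1$; let $\G_\fa'=\{\g\in\G:\g\gs_\fa\be_1=\gs_\fa\be_1\}$ be the stabiliser of $\gs_\fa\be_1$ and set $N_\Z=\bsm 1&\Z\\0&1\esm$, so that $\gs_\fa^{-1}\G_\fa'\gs_\fa=N_\Z$ and $-I\notin\G_\fa'$. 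By \cref{highbound} and the monotone approximation in the proof of \cref{repthm}, it is enough to establish the identity for $f\in C_c(\R^2\times\R^2)$, the general case $f\in SC(\R^2\times\R^2)$ following by monotone limits (see the end of the proof).

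First I would unfold the sum over $\Lambda_\fa$. Since the points of $\Lambda_\fa$ are parametrised by $\G/\G_\fa'$ and $\Lambda_\fb$ is $\G$-invariant, $\Theta_{\Lambda_\fa,\Lambda_\fb;f}(g)=\sum_{\g\in\G/\G_\fa'}\Phi(g\g)$ with $\Phi(g)=\sum_{\by\in\Lambda_\fb}f(g\gs_\fa\be_1,g\by)$ right $\G_\fa'$-invariant. Unfolding and then right-translating by $\gs_\fa$ (which identifies $G/\G_\fa'$ with $G/N_\Z$), and using $\gs_\fa^{-1}\Lambda_\fb=(\gs_\fa^{-1}\G\gs_\fb)\be_1$, yields
$$
\int_{G/\G}\Theta_{\Lambda_\fa,\Lambda_\fb;f}(g)\,d\mu(g)=\int_{G/N_\Z}\ \sum_{\bv\in(\gs_\fa^{-1}\G\gs_\fb)\be_1}f(g\be_1,g\bv)\,d\mu(g),
$$
each homogeneous space carrying the invariant measure compatible with the probability measure $\mu$.

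Now I would decompose $(\gs_\fa^{-1}\G\gs_\fb)\be_1$ by \cref{DCD}. Lifting that $\PSL_2$-statement to $\SL_2(\R)$ (with $-I\in\G$), the term $\delta_\fab N_\Z$ lifts to $\delta_\fab(N_\Z\cup(-N_\Z))$, contributing the two points $\pm\be_1$, present precisely when $\delta_\fab=1$, i.e.\ when $0\in\cN_\fab$ (\cref{determinants and cosets}); and for each $c\in\cN_\fab^*$ there are exactly $\varphi_\fab(c)$ double cosets $N_\Z\delta N_\Z$ with lower-left entry $c$ (\cref{lemma2}), each contributing the orbit $N_\Z\delta\be_1$. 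As $n\be_1=\be_1$ for $n\in N_\Z$, the contribution of such a coset is $\sum_{n\in N_\Z}f(gn\be_1,gn\delta\be_1)$, which unfolds against $G/N_\Z$ and, after the substitution $g\mapsto g\,h_\delta$ with $h_\delta\in\SL_2(\R)$ satisfying $h_\delta\be_1=\be_1$ and $h_\delta(\delta\be_1)=c\be_2$, becomes $c_\G\int_G f(g\be_1,cg\be_2)\,d\eta(g)$ (the constant $c_\G$ being the normalisation of $\eta$ relative to $\mu$, exactly as in \cref{Veech}). Summing over the $\varphi_\fab(c)$ cosets with a given $c$ and over $c\in\cN_\fab^*$ produces the first sum of the formula. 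Finally, the points $\pm\be_1$ have trivial $N_\Z$-orbit, and $\int_{G/N_\Z}\op{f(g\be_1,g\be_1)+f(g\be_1,-g\be_1)}\,d\mu(g)$ equals, by the same unfolding applied to the single orbit sums $\Theta_{\Lambda_\fa;h_\pm}$ with $h_\pm(\bx)=f(\bx,\pm\bx)$, the quantity $\int_{G/\G}\op{\Theta_{\Lambda_\fa;h_+}+\Theta_{\Lambda_\fa;h_-}}d\mu$, which is $c_\G\int_{\R^2}\op{f(\bx,\bx)+f(\bx,-\bx)}\,d\bx$ by \cref{Veech}. Adding these contributions gives the stated identity.

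The step I expect to be the main obstacle is the measure-theoretic bookkeeping along the successive unfoldings (from $G/\G$ down to $G$, and from $G/N_\Z$ down to $\R^2$ for the parallel terms): one must check that every double coset of \cref{DCD} is counted with multiplicity exactly one, that its single unipotent degree of freedom is precisely what unfolds the remaining $N_\Z$-quotient, and that the constant $c_\G$ is produced only at the identification of $G/N_\Z$ with $\R^2$ — so that the linearly independent terms carry the weight $\varphi_\fab(c)$ against $\eta$, while the parallel terms carry $c_\G$ against Lebesgue measure. A secondary point is the passage from $C_c$ to $SC(\R^2\times\R^2)$ and the convergence of the sum over $\cN_\fab^*$: both follow once one notes that the right-hand side of the identity is integration against a positive Borel measure on $\R^2\times\R^2$, hence continuous under monotone increasing limits of functions, matching the monotone convergence of the left-hand side guaranteed by the monotonicity of $f\mapsto\Theta_{\Lambda_\fa,\Lambda_\fb;f}$ (the upper semicontinuous case reducing to the lower semicontinuous one via $f\mapsto-f$).
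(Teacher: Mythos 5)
Your proposal is correct, and it reorganizes the argument in a genuinely different way from the paper. The paper first invokes the representation theorem (\cref{repthm}) together with the decomposition of $\R^2\times\R^2$ into $G$-orbits to know a priori that the mean value is integration against a sum of invariant measures with unknown coefficients $a, b_\infty, b_t, c_t$, and then pins each coefficient down by testing against characteristic functions of closed sets: a separate application of \cref{Miyake} kills $b_t$ for $t\neq\pm1$, \cref{Veech} gives $b_{\pm1}=c_\G$, and the set $D_c^{\fab}$ of pairs of determinant $c$ is decomposed into $\varphi_\fab(c)$ free $\G$-orbits which are each unfolded. Your proof instead computes everything in one pass: unfold over $\G/\G_\fa'$, apply \cref{DCD} to $(\gs_\fa^{-1}\G\gs_\fb)\be_1$, and unfold each piece. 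The combinatorial heart is the same — your identification of double cosets with lower-left entry $c$ via \cref{lemma2}, and the unfolding of each nondegenerate orbit with trivial stabilizer, is literally the paper's treatment of $D_c^{\fab}$ in different clothing — but your route makes the parallel contribution $\pm\be_1$ and the exclusion of other proportionality ratios fall out of the double coset decomposition itself rather than requiring a separate invariant-measure classification and a Miyake-lemma computation, at the cost of having to track the measure normalizations through two successive unfoldings. The point you flag as delicate is indeed the only real bookkeeping: since a linearly independent pair has trivial stabilizer in $\G$, your composite unfolding over $\G/\G_\fa'$ and then $N_\Z$ is exactly the full unfolding $\int_{G/\G}\sum_{\g\in\G}F(g\g)\,d\mu = c_\G\int_G F\,d\eta$ that the paper uses by convention, while the refolding of the parallel terms against \cref{Veech} produces $c_\G$ times Lebesgue measure; your account of both is consistent. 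The passage from $C_c$ to $SC$ by monotone limits matches \cref{repthm}.
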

\begin{rmk}
 	In order to obtain an extension from $SC(\R^2\times \R^2)$ to a larger class of functions, one can follow the techniques of \cref{Veech}, where the co-area formula is replaced by the area formula. This technique does not unconditionally yield the set of all integrable Borel functions. 	
\end{rmk}
\begin{proof}
Following the discussion in \cite[Section 3.1]{Fai21}, we decompose $\R^2\times \R^2$ into $G$-invariant subsets on which $\mu$ must be supported; we write 
\begin{align}\label{decompo}
\int_{G/\G}\Theta_{\Lambda_\fa, \Lambda_\fb; f}(g) d\mu(g) = a\cdot\delta_0(f) + b_\infty\int_{\R^2} f(\bo,\bx)d\bx +\sum_{t\in \R} b_t\int_{\R^2} f(\bx,t\bx)d\bx\nonumber\\
+ \sum_{t\neq 0}c_t \int_G f(g\be_1, tg\be_2) d\eta(g).
\end{align}

In the following we use the fact that we can choose any $f\in SC(\R^2\times \R^2)$. In each case we will define $f = \one_S$ for a specific closed set $S$, and use it to determine the coefficients for each term of \cref{decompo}.

Set $S = \{(\bo,\bo)\}$. Then since no lattice orbit passes through the origin, we have $a=0$.

Set $S = \{\bo\}\times \overline{B_R}$. Since no discrete lattice orbit contains the origin, $\Theta_{\Lambda_\fa, \Lambda_\fb; f}=0$, and we have $b_\infty=0$. The same argument yields $b_0=0$.

Set $S =\{(\bx,t\bx)\in \overline{B_R}\times \overline{B_R}\}$. By \cref{determinants and cosets}, if $\delta_\fab=0$, then $S\cap (\Lambda_\fa \times\Lambda_\fb)= \emptyset$. Then (\ref{decompo}) becomes
\[
0 = b_t \int_{\R^2} \one_S(\bx,t\bx) d\bx = b_t |B_{R/t}|
\]
and hence $b_t=0$ for all $t\in\R^*$ when $\delta_\fab=0$.

Suppose now $\delta_\fab=1$. 
Let $\bx\in\Lambda_\fa$, $t\bx\in\Lambda_\fa$. Choose $g\in\G\gs_\fa$ and $h\in\G\gs_\fa$ such that the first column vectors of $g$ and $h$ are respectively $\bx$ and $t\bx$. Then 
\[
h^{-1}g = \bpm 1/t &*\\ 0&t\epm \in\gs_\fa^{-1}\G\gs_\fa.
\] 
By \cref{Miyake}, we conclude that $t\in\{\pm1\}$. Set $f_+ = \one_S$ when $S =\{(\bx,\bx)\in \overline{B_R}\times \overline{B_R}\}$ and $f_- = \one_S$ when $S =\{(\bx,-\bx)\in \overline{B_R}\times \overline{B_R}\}$. Hence $\Theta_{\Lambda_\fa, \Lambda_\fb; f_{\pm}}\neq 0$ and (\ref{decompo}) yields
\[
\int_{G/\G}\Theta_{\Lambda_\fa, \Lambda_\fb; f_{\pm}} (g)d\mu(g) = b_{\pm1} \int_{\R^2} f_\pm(\bx,\pm \bx)d\bx.
\]
On the other hand, by \cref{Veech}, we have
\[
\int_{G/\G} \Theta_{\Lambda_\fa, \Lambda_\fb; f_{\pm}}(g)d\mu(g) = c_\Gamma \int_{\R^2} f_\pm(\bx,\pm\bx)d\bx.
\]
Since this is true for all $R$ we conclude that when $\delta_\fab=1$ we have $b_t=0$ when $t\neq\pm 1$ and $b_{\pm 1} = c_\Gamma$. 

Set $S=\{(\bx,\by) : \bx, \by \in \overline{B_R},\, \det( \bx \mid \by)=c\}$. Then $\Theta_{\Lambda_\fa, \Lambda_\fb; f}(g)=|\{(\bx,\by)\in(\Lambda_\fa\times\Lambda_\fb) \cap g^{-1} \overline{B_R}: \det(\bx \by)=c\}|\neq0$ if and only if $c\in\cN_\fab^*$. Now suppose $c\in \mathcal{N}_\fab^*$. By \cref{lemma2}, there exists $0<a\leq |c|$ so that $\begin{pmatrix} a\\ c\end{pmatrix} \in \sigma_{\mathfrak{a}}^{-1} \Gamma \sigma_{\mathfrak{b}},$ so that by definition $\varphi_{\fab}(c) \neq 0$.

We want to show that (cf. \cite[Lemma 3.6]{Fai21}) for all $R$, the support of $\Theta_{\Lambda_\fa, \Lambda_\fb; f}$ reduces to $\varphi_{\fab}(c)$ total $\Gamma$-orbits. More precisely we show that
\[D_c^{\fab} \defeq \{(\bx_1,\bx_2) \in \Lambda_\fa \times \Lambda_\fb: \det(\bx_1| \bx_2)=c\} = \bigsqcup_{\stackrel{\bsm a\\ c\esm \in \sigma_\fa^{-1} \Gamma \sigma_\fb\be_1 }{0<a\leq |c|}} 
\Gamma \sigma_\fa \begin{pmatrix} 1 & a \\ 0 & c\end{pmatrix}.\]

Indeed by the proof of \cref{lemma2}, there is some $h\in 
\Gamma\sigma_{\fa}$ so that $h^{-1}(\bx_1| \bx_2) = \bsm 1 & a \\ 0 & c\esm$ for some $0< a\leq |c|$ and $\bsm a\\ c\esm \in \sigma_\fa^{-1} \Gamma \sigma_\fb\be_1$. Moreover each of these orbits is distinct. Namely if $|c|\geq a_2  > a_1 >0$ with 
\[\Gamma \sigma_\fa \begin{pmatrix} 1 & a_1 \\ 0 & c\end{pmatrix} = \Gamma \sigma_\fa \begin{pmatrix} 1 & a_2 \\ 0 & c\end{pmatrix},\]
then since $\sigma_{\fa}$ is a scaling transformation, we must have $a_2-a_1 \in c\Z$ which is impossible as $0<a_2-a_1 <c.$ 

Now that we know $D_c^{\fab}$ is decomposed into $\varphi_{\fab}(|c|)$ distinct orbits, we want to show we have the same contribution for each orbit. Namely  (cf. \cite[Lemma 3.7]{Fai21}) if we fix $\bsm a\\ c\esm \in \sigma_{\mathfrak{a}}^{-1} \Gamma \sigma_{\mathfrak{b}}\be_1,$ with $0<a\leq |c|$, then by our normalization of the Haar measure, we have 
\begin{align*}
\int_{G/\G} \sum_{\gamma \in \Gamma} f\left(g \gamma \sigma_\fa \begin{pmatrix} 1 & a \\ 0 & c\end{pmatrix}\right) \,d\mu(g) &=c_\Gamma \int_G  f\left(g  \sigma_\fa \begin{pmatrix} 1 & a \\ 0 & c\end{pmatrix}\right)\,d\eta(g)\\
&= c_\Gamma \int_G f\left(g\bpm 1 &0\\ 0&c\epm\right)d\eta(g)
\end{align*}
by multiplying $g$ on the right by $\begin{bmatrix} 1 & -\frac{a}{c}\\ 0 & 1\end{bmatrix}\sigma_\fa^{-1}$ and using that $\eta$ is invariant under the action of $G$. Hence we get the same contribution from each of the $\varphi_\fab(|c|)$ orbits. Since this is true for all $R$ we obtain the desired coefficients, thus concluding the proof of the first equality.
\end{proof}

Let $C$ be the cone of $2\times2$ matrices $A$ such that $\lambda A\in G/\G$ for some $\lambda\geq1$, and let $dA$ be the Euclidean volume element. There exists some constant $c_0$ so that
\[
\int_C f(A) dA = c_0\int_0^1 \nu \int_{G/\G} f(\nu^{1/2}g) d\mu(g) d\nu.
\]
Inspired by Schmidt \cite{Schmidt}, we introduce the equivalent\footnote{ I.e., the measures $dm(A)$ and $dA$ have the same null measure sets.} cone measure 
\[
\int_C f(A) dm(A) = \int_0^1 \int_{G/\G} f(\nu^{1/2} g)d\mu(g) d\nu.
\]
Let $\Lambda_\fa$, $\Lambda_\fb$ be scaled discrete $\G$-orbits. With respect to the cone measure $dm$, we have $m(C)=1$, 
\begin{align}\label{F1_Lemma}
\int_C \Theta_{\Lambda_\fa;f}(A) \det(A)\, dm(A) &= \int_0^1 \nu \int_{G/\G} \Theta_{\Lambda_\fa;f}(\nu^{1/2}g)\, d\mu(g)\, d\nu = c_\Gamma \int_{\R^2} f(\bx)\, d\bx
\end{align}
and
\begin{align*}
\int_C \Theta_{\Lambda_\fa,\Lambda_\fb;f}(A) \det(A)^2\, dm(A) &= \int_0^1 \nu^2 \int_{G/\G} \Theta_{\Lambda_\fa,\Lambda_\fb;f}(\nu^{1/2}g)\, d\mu(g)\,d\nu.
\end{align*}
The second mean value formula given by Theorem 1.8' follows from the following theorem.

\begin{thm}\label{thm-useful}
Let $\G<G$ be a nonuniform lattice containing $-I$, and let $\Lambda_\fa$, $\Lambda_\fb$ be two scaled discrete $\G$-orbits. For each $f\in SC(\R^2\times\R^2)$, we have
\begin{align}\label{MVF}
\int_C \Theta_{\Lambda_\fa,\Lambda_\fb;f}(A)\det(A)^2 dm(A) &= c_\Gamma\iint_{\R^2\times\R^2} \Phi_\fab(|\bx\wedge\by|)f(\bx,\by)d\bx d\by\nonumber\\
&\qquad  + \frac{{\delta_\fab}{c_\Gamma}}{2}\int_{\R^2}\left( f(\bx,\bx)+f(\bx,-\bx)\right) d\bx,
\end{align}
where $\Phi_\fab$ is the function given by (\ref{Phi}).
\end{thm}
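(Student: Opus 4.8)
The plan is to deduce \cref{thm-useful} from the mean value formula \cref{2ndMoment} by unfolding the cone measure and running the change of variables alluded to after \cref{Thm1}. I would start from the identity recorded just above the statement, $\int_C \Theta_{\Lambda_\fa,\Lambda_\fb;f}(A)\det(A)^2\,dm(A)=\int_0^1\nu^2\int_{G/\G}\Theta_{\Lambda_\fa,\Lambda_\fb;f}(\nu^{1/2}g)\,d\mu(g)\,d\nu$, and observe that $\Theta_{\Lambda_\fa,\Lambda_\fb;f}(\nu^{1/2}g)=\Theta_{\Lambda_\fa,\Lambda_\fb;f_\nu}(g)$ for $f_\nu(\bx,\by):=f(\nu^{1/2}\bx,\nu^{1/2}\by)$. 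Since $\bx\mapsto\nu^{1/2}\bx$ is a homeomorphism, $f_\nu\in SC(\R^2\times\R^2)$, so \cref{2ndMoment} applies to $f_\nu$ and the computation splits into a diagonal term and the main term (the sum over $c\in\cN_\fab^*$). The diagonal term is immediate: $\int_{\R^2}f_\nu(\bx,\pm\bx)\,d\bx=\nu^{-1}\int_{\R^2}f(\bx,\pm\bx)\,d\bx$ and $\int_0^1\nu^2\cdot\nu^{-1}\,d\nu=\tfrac12$, which yields the term $\tfrac{\delta_\fab c_\G}{2}\int_{\R^2}(f(\bx,\bx)+f(\bx,-\bx))\,d\bx$ in \eqref{MVF}.

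For the main term I would apply to $h=f_\nu$ the Haar-unfolding identity from the discussion before \cref{2ndMoment}, namely $\int_G h(g\be_1,cg\be_2)\,d\eta(g)=\tfrac1{|c|}\int_\R\int_{\R^2}h(\bx,t\bx+c\bx^*)\,d\bx\,dt$ with $\det(\bx\mid\bx^*)=1$. Rescaling the $\bx$-integration to absorb the $\nu^{1/2}$ in the first slot of $f$ (the normalization $\det(\bx\mid\bx^*)=1$ turns $\nu^{1/2}(t\bx+c\bx^*)$ into $t\bx+c\nu\bx^*$) and integrating against $\nu^2\,d\nu$, the main term becomes
\[
c_\G\sum_{c\in\cN_\fab^*}\frac{\varphi_\fab(c)}{|c|}\int_{\R^2}\Bigl(\int_0^1\nu\int_\R f(\bx,t\bx+c\nu\bx^*)\,dt\,d\nu\Bigr)\,d\bx.
\]
For fixed $\bx\neq\bo$, I would then run the linear change of variables $(t,\nu)\mapsto\by:=t\bx+c\nu\bx^*$: it has Jacobian $|c|$, image $\{\by:0<|\bx\wedge\by|\le|c|\text{ and }\operatorname{sgn}\det(\bx\mid\by)=\operatorname{sgn}(c)\}$, and on that image $\nu=|\bx\wedge\by|/|c|$ since $\det(\bx\mid\by)=c\nu$; hence the bracketed integral equals $|c|^{-2}\int|\bx\wedge\by|\,f(\bx,\by)\,d\by$ over the image. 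Interchanging the sum over $c$ with the $(\bx,\by)$-integral — legitimate by Tonelli after reducing to $f\ge0$ via the monotone-approximation bootstrap of \cref{Veech}, with absolute convergence supplied by \cref{asymptot} — and folding the two sign components: since $-I\in\G$, $\varphi_\fab(c)$ depends only on $|c|$ (by \cref{lemma2}), so for each $(\bx,\by)$ with $\det(\bx\mid\by)\neq0$ the surviving $c$ all share a single sign and the sum of $\varphi_\fab(c)|c|^{-3}$ over them equals $\Phi_\fab(|\bx\wedge\by|)/|\bx\wedge\by|$. Multiplying back by $|\bx\wedge\by|$ produces $c_\G\iint_{\R^2\times\R^2}\Phi_\fab(|\bx\wedge\by|)\,f(\bx,\by)\,d\bx\,d\by$, as claimed.

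The step I expect to be the main obstacle is this last change of variables and its sign bookkeeping: one must keep the constraint $\operatorname{sgn}\det(\bx\mid\by)=\operatorname{sgn}(c)$ correct, because it is exactly this constraint that offsets the symmetry $c\leftrightarrow-c$ of $\cN_\fab^*$ and prevents a spurious factor of $2$, so that $\Phi_\fab$ — defined with a sum over positive $c$ only — is what appears. A secondary technical point is the Fubini/Tonelli justification together with the reduction to $f\ge0$ for $f\in SC(\R^2\times\R^2)$, which need not be compactly supported, handled as in the proof of \cref{Veech}.
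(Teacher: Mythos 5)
Your proposal is correct and follows essentially the same route as the paper: apply \cref{2ndMoment} to $f_\nu$, handle the diagonal term by the $\nu^{-1}$ rescaling, and convert the sum over $c$ via the substitution $\det(\bx\mid\by)=c\nu$ (the paper writes this as $t=\nu c$ before unfolding the Haar integral, you unfold first and do the two-dimensional change of variables — the same computation in a different order). Your sign bookkeeping, using $\varphi_\fab(-c)=\varphi_\fab(c)$ together with the constraint $\operatorname{sgn}\det(\bx\mid\by)=\operatorname{sgn}(c)$ to avoid a spurious factor of $2$, matches the paper's splitting of the $t$-integral into $\int_0^c$ and $\int_{-c}^0$.
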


\begin{proof}
We will obtain this formula from \cref{2ndMoment} by a change of coordinates. Define $f_{\nu}(\bx,\by) = f(\nu^{1/2} \bx, \nu^{1/2}\by)$, and apply \cref{2ndMoment} to $\Theta_{\Lambda_\fa,\Lambda_\fb;f_{\nu}}$. In the linearly dependent subsets, we have via subsitution
\[\delta_{\fab}c_\Gamma \int_0^1 \nu^2 \int_{\R^2} f_\nu(\bx,\bx)+ f_\nu(\bx , -\bx)\,d\bx\,d\nu = \delta_{\fab}c_\Gamma \int_0^1 \nu \,d\nu \int_{\R^2} f(\bx,\bx) + f(\bx,-\bx)\,d\bx. \]
 
It remains to show that
\begin{align}\label{wts}
c_\Gamma \int_0^1 
\nu^2 \sum_{c\in\cN_\fab^*} \varphi_\fab(c)  \int_G f(\nu^{1/2}gJ_c) d\eta(g)d\nu  & = c_\Gamma \iint \Phi_\fab(|\bx\wedge\by|)f(\bx,\by)d\bx d\by.
\end{align}
where we set $J_c=\bsm 1&0\\0&c\esm$. 
We write $t=\nu c$ and note that 
\begin{enumerate}
\item $\cN_\fab$ and $\varphi_{\fab}(c)$ are symmetric about $0$ if $-I \in \Gamma$;
\item and $\cN_\fab$ is unbounded, so the parameter $t$ ranges over $\R\setminus\{0\}$.
\end{enumerate}
Since $\Theta_{\Lambda_\fa,\Lambda_\fb; f}$ is integrable, Fubini applies and we may exchange the order of integration and summation in what follows. 
 The LHS of (\ref{wts}) becomes 
\begin{align*}
 &c_\Gamma \sum_{c\in\cN_\fab^*} \varphi_\fab(c) \int_0^1 \nu^2 \int_G f(\nu^{1/2}gJ_c) d\eta(g)d\nu \tag{{By a simple change of coordinates $g=g'\mathrm{diag}(\nu^{-1/2}, \nu^{1/2})$ and the $G$-invariance of $\eta$}}\\
 &=c_\G\int_G \sum_{c\in\cN_\fab^*} \int_0^1 \nu^2 \varphi_\fab(c) f(g\bsm 1&0\\ 0& c\nu\esm) d\nu d\eta(g) \\
 &\stackrel{(1)}{=} c_\G \int_G \sum_{c\in (\cN_\fab^*)_{>0}} \left[\int_{0}^c t^2 \frac{\varphi_\fab(c)}{c^3} f(g\bsm 1&0\\0&t\esm) + \int_{-c}^0 t^2 \frac{\varphi_\fab(c)}{c^3}f(g\bsm 1&0\\0&t\esm)\right]\,dt\,d\eta(g)  \\
&\stackrel{(2)}{=}c_\G \int_G \left[\int_0^
\infty t^2f(g\bsm 1&0\\0&t\esm)  \sum_{\substack{c\in\cN_\fab^*\\ c\geq t}} \frac{\varphi_\fab(c)}{c^3} dt  + \int_{-\infty}^0 t^2 f(g\bsm 1&0\\0&t\esm)  \sum_{\substack{c\in\cN_\fab^*\\ -c\leq t}} \frac{\varphi_\fab(c)}{c^3} \right] d\eta(g)\\
&\stackrel{(1)}{=} c_\G\int_\R |t|\Phi_\fab(|t|) \int_G f(g\bsm 1&0\\ 0&t\esm) d\eta(g) dt.
\end{align*}
\end{proof}

\section{Geometric estimates}\label{sec:estimates}

We may rewrite the mean value formula as
\begin{equation}\label{eq:split}
\begin{split}
\int_C \Theta_{\Lambda_\fa,\Lambda_\fb;f}(A)\det(A)^2 dm(A) &= 
c_\Gamma^2 \iint_{\R^2\times\R^2} f(\bx,\by)d\bx d\by +\frac{\delta_\fab c_\Gamma}{2} \int_{\R^2}\op{f(\bx,\bx)+f(\bx,-\bx)}d\bx\\
&\qquad +
c_\Gamma \iint_{\R^2\times\R^2} \op{\Phi_\fab(|\bx\wedge\by|) - c_\Gamma}f(\bx,\by)\, d\bx\, d\by.
\end{split}
\end{equation}
In this section, we use integration by parts to compute estimates for the last double integral,
\begin{equation}
\label{2}
 \iint_{\R^2\times\R^2} \op{\Phi_\fab(|\bx\wedge\by|) - c_\Gamma}f(\bx,\by)\, d\bx\, d\by.
\end{equation} 
Namely, we will obtain an estimate in \eqref{pause}, and the results from this section to be used later are given in \cref{ex:BR}, \cref{ex:eps}, and \cref{ex:det}.

For $t,u>0$ consider the rectangle,
\[
P_{t,u}(\bx) = \{\by\in\R^2: |\bx^\perp\wedge \by|\leq t,\, |\bx\wedge\by|\leq u\}
\]
and set $P(\bx)=P_{1,1}(\bx)$. Then $tP(\bx)=P_{t,t}(\bx)$. If we write $\bx= \|\bx\| k_\theta\be_1$ and $\bx^\perp=k_{\pi/2} \|\bx\|k_\theta\be_1 =  \|\bx\|k_\theta \be_2$, we have 
\[
P_{t,u}(\bx) = k_\theta P_{\tfrac{t}{\|\bx\|},\tfrac{u}{\|\bx\|}}(\be_1).
\]
In particular, $P_{t,u}(\bx)$ is a rotated rectangle, centered at the origin, with sides of length $2t/\|\bx\|$ and $2u/\|\bx\|$. We consider the inner integral of (\ref{2}). For $t$ sufficiently large, writing $\bx=\|\bx\| k_{\theta} \be_1$, we have
\begin{align*}
\int_{\R^2} (\Phi_\fab(|\bx\wedge\by|)-c_\G &\delta_\fab) f(\bx,\by)d\by  = \int_{tP(\bx)} (\Phi_\fab(|\be_1\wedge \|\bx\|k_{-\theta}\by|)-c_\G) f(\bx,\by)\, d\by \\ 
&=\frac{1}{\norm{\bx}^2}\int_{tP(\be_1)} (\Phi_\fab(|\be_1\wedge \by|)-c_\G) f(\bx,\tfrac{1}{\|\bx\|}k_\theta \by)\, d\by \\ 
&= \frac{1}{\|\bx\|^2}\int_0^t (\Phi_\fab(y_2)-c_\G)\left( \int_0^{t} \sum f\left(\bx, \tfrac{1}{\|\bx\|} k_\theta \bsm \pm y_1\\ \pm y_2\esm\right)\, dy_1\right) dy_2,
\end{align*}
where the inner sum contains a summand for each of the four $\bsm \pm y_1\\ \pm y_2\esm$. We call the inner integral $g_t(y_2)$ and compute that its primitive is
\begin{align*}
G_t(u) &= \int_0^u \int_0^t \sum f\left(\bx, \tfrac{1}{\|\bx\|} k_\theta \bsm \pm r\\ \pm s\esm\right)\, dr\, ds\\
& = \int_{P_{t,u}(\be_1)} f(\bx,\tfrac{1}{\|\bx\|}k_\theta \by)\, d\by\\
& = \|\bx\|^2 \int_{P_{t,u}(\bx)} f(\bx,\by)\, d\by.
\end{align*}
Recall that \cref{asymptot}, there is some $t_0 >0$ and constant $M$ so that
\begin{equation}\label{eq:t0Phi}
\Phi_\fab(t) \leq \tilde{\Phi}_\fab(t) \defeq \begin{dcases}  c_\G +M t^{-\delta} & t \geq t_0,\\
M t & 0\leq t < t_0.
\end{dcases}
\end{equation}
Applying integration by parts yields
\begin{align}
\int_{\R^2} (\Phi_\fab(|\bx\wedge\by|)&-c_\G) f(\bx,\by)d\by \nonumber\\
&= \frac{1}{\|\bx\|^2}\int_0^t (\Phi_\fab(y_2)-c_\G)\left( \int_0^{t} \sum f\left(\bx, \tfrac{1}{\|\bx\|} k_\theta \bsm \pm y_1\\ \pm y_2\esm\right)\, dy_1\right) dy_2\nonumber \\ 
& \ll  (\tilde{\Phi}_\fab(t)-c_\G ) \int_{tP(\bx)} f(\bx,\by)\, d\by -\int_0^t \tilde{\Phi}'_\fab(u) \int_{P_{t,u}(\bx)} f(\bx,\by)\, d\by du\nonumber \\ 
& \ll t^{-\delta} \int_{tP(\bx)} f(\bx,\by)\, d\by+ \delta \int_{t_0}^{\infty} \left(\int_{P_{t,u}(\bx)} f(\bx,\by)d\by\right) u^{-1-\delta}du. \label{pause}
\end{align}

\begin{example}\label{ex:BR} If $f(\bx,\by)=1_{B_R}(\bx)1_{B_R}(\by)$. Letting $t\to\infty$ in (\ref{pause}), we have
\[
(\ref{2}) \ll \int_{t_0}^\infty \int_{B_R} |P_{\infty,u}(\bx)\cap B_R| d\bx\, u^{-1-\delta} du
\]
with $|P_{\infty,u}(\bx)\cap B_R| =|B_R|$ if $u\geq R\|\bx\|$, and $|P_{\infty,u}(\bx)\cap B_R|\ll \tfrac{Ru}{\|\bx\|}$ otherwise.
Hence
\begin{align*}
(\ref{2}) & 
\ll \int_{B_R}\left( \int_0^{R\|\bx\|} \frac{uR}{\|\bx\|} u^{-1-\delta}du +\int_{R\|\bx\|}^\infty R^2 u^{-1-\delta}du\right) d\bx
\ll R^{2(2-\delta)}
\end{align*}

\end{example}

\begin{example}\label{ex:eps} Let $f(\bx,\by)=1_{B_R}(\bx)1_{B^*_\eps(\bx)}(\by)$. Once again, $|P_{\infty,u}(\bx)\cap B^*_\eps(\bx)| = |B_\eps|$ if $u\geq\eps\|\bx\|$ and $|P_{\infty,u}(\bx)\cap B^*_\eps(\bx)|\ll \tfrac{\eps u}{\|\bx\|}$ otherwise. Hence we have the estimate
\begin{align*}
(\ref{2}) & 
\ll \int_{B_R} \op{\int_0^{\eps\|\bx\|} \frac{\eps}{\|\bx\|} u^{-\delta} du + \int_{\eps\|\bx\|}^\infty \eps^2 u^{-\delta}\frac{du}{u}} d\bx \ll (\eps R)^{2-\delta}.
\end{align*}
\end{example}

\begin{proof}[Proof of \cref{thm:pairco}]
Apply \cref{eq:split} to the characteristic function $f$ supported on $\{\bx\in\R^2,\, \by\in B_{s/\sqrt c}^*(\bx)\}$ and use the estimate from \cref{ex:eps} to bound the remaining term.
\end{proof}

\begin{example}\label{ex:det} For $D,s >0$, let $f(\bx,\by)=1_{B_R}(\bx)1_{\mathcal{D}_{D,s}(\bx)}(\by)$, where
\[\mathcal{D}_{D,s}(\bx) = \{\by \in \R^2: |\by| \leq s |\bx|\text{ and } |\bx \wedge \by| \leq D\}.\] Then in this case $|P_{\infty, u}(\bx) \cap \mathcal{D}_{D,s}(\bx)| \leq \frac{u}{\norm{\bx}} \cdot s\norm{\bx} = s u$ if $u \leq D$, and $|P_{\infty, u}(\bx) \cap \mathcal{D}_{D,s}(\bx)| \leq s D$ if $u \geq D$. Therefore 
\[(\ref{2})
\ll s \int_{B_R} \op{\int_0^{D} u^{-\delta} du + D \int_{D}^\infty u^{-1-\delta} \,du}  d\bx \ll s D R^2.\]
\end{example}

\section{Effective counting}\label{sec10}

\subsection{Second moment estimate}

We will derive \cref{Thm:1} (see \cref{countN} for a full statement) by a combination of Borel--Cantelli and interpolation from the following `on average' power saving. Our strategy is inspired by \cite{Schmidt}.
\begin{prop}\label{prop:11}
	Let $\Lambda$ be a scaled discrete lattice orbit, and let $B$ be a Lebesgue measurable set in the plane with finite volume $|B|>C_\Lambda$. There is a constant $M_\Lambda$ so that
	\[
	\int_C \left(\det(A) \Theta_{\Lambda; \one_B}(A) - c_\Gamma |B|\right)^2\,d m(A) \leq M_\Lambda |B|^{2-\delta},\]
	where $\delta$ is given by \cref{def:delta}.
\end{prop}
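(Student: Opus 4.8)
The plan is to expand the square and apply the mean value formulas. Write $\Theta = \Theta_{\Lambda;\one_B}$ and note that
$$
\int_C \left(\det(A)\Theta(A) - c_\Gamma|B|\right)^2 dm(A) = \int_C \det(A)^2\Theta(A)^2\, dm(A) - 2c_\Gamma|B|\int_C \det(A)\Theta(A)\, dm(A) + c_\Gamma^2|B|^2,
$$
where I used $m(C)=1$. By \eqref{F1_Lemma} the middle term equals $2c_\Gamma^2|B|^2$. For the first term I apply \cref{coro:2ndmoment} (equivalently \cref{thm-useful} with $\Lambda_\fa=\Lambda_\fb=\Lambda$ and $f(\bx,\by)=\one_B(\bx)\one_B(\by)$), which gives
$$
\int_C \det(A)^2\Theta(A)^2\, dm(A) = \left(c_\Lambda|B|\right)^2 + c_\Lambda\int_{\R^2}\left(\one_B(\bx)\one_B(-\bx)+\one_B(\bx)^2\right)d\bx + c_\Lambda\iint_{\R^2\times\R^2}\left(\Phi(|\bx\wedge\by|)-c_\Lambda\right)\one_B(\bx)\one_B(\by)\,d\bx\,d\by.
$$
Since $c_\Lambda = c_\Gamma$ for a scaled orbit, the main terms $(c_\Gamma|B|)^2 - 2c_\Gamma^2|B|^2 + c_\Gamma^2|B|^2$ cancel completely, leaving
$$
\int_C \left(\det(A)\Theta(A) - c_\Gamma|B|\right)^2 dm(A) = c_\Gamma\int_{\R^2}\left(\one_B(\bx)\one_B(-\bx)+\one_B(\bx)\right)d\bx + c_\Gamma\iint_{\R^2\times\R^2}\left(\Phi(|\bx\wedge\by|)-c_\Gamma\right)\one_B(\bx)\one_B(\by)\,d\bx\,d\by.
$$

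The first (diagonal) term is bounded by $2c_\Gamma|B|$, which is $\ll |B|^{2-\delta}$ for $|B|>C_\Lambda$ (any fixed threshold), provided $\delta\le 1$; since $\delta\in(0,2/3]$ this is fine. The main work is bounding the double integral $\iint (\Phi(|\bx\wedge\by|)-c_\Gamma)\one_B(\bx)\one_B(\by)\,d\bx\,d\by$. Here I would follow the geometric estimates of \cref{sec:estimates}: use \eqref{eq:t0Phi}, which says $\Phi_\fab(t)\le\tilde\Phi_\fab(t)$ with $\tilde\Phi_\fab(t)-c_\Gamma\ll t^{-\delta}$ for $t\ge t_0$ and $\tilde\Phi_\fab(t)\ll t$ near $0$, together with the integration-by-parts bound \eqref{pause}:
$$
\int_{\R^2}(\Phi(|\bx\wedge\by|)-c_\Gamma)\one_B(\by)\,d\by \ll t^{-\delta}\int_{tP(\bx)}\one_B(\by)\,d\by + \delta\int_{t_0}^\infty\Big(\int_{P_{\infty,u}(\bx)}\one_B(\by)\,d\by\Big)u^{-1-\delta}\,du,
$$
letting $t\to\infty$ so the first term drops out (it vanishes once $tP(\bx)\supset B$). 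Then integrate over $\bx\in B$ and swap the order of integration: the inner quantity is $\int_B |P_{\infty,u}(\bx)\cap B|\,d\bx\,u^{-1-\delta}du$. The key geometric input is that $|P_{\infty,u}(\bx)\cap B| \le \min\{|B|,\ 2u|B|^{1/2}/\|\bx\|\}$ — the strip $P_{\infty,u}(\bx)$ has width $2u/\|\bx\|$, and a measurable set of area $|B|$ meeting a strip of width $w$ has intersection area at most $\min\{|B|, w\cdot\mathrm{diam}\}$; replacing $\mathrm{diam}$ by a multiple of $|B|^{1/2}$ after noting (harmlessly) that the ``long direction'' contribution is controlled, as in \cref{ex:BR} with $R\asymp|B|^{1/2}$. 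Splitting the $u$-integral at $u\asymp \|\bx\|\,|B|^{1/2}$ and then integrating in $\bx$ over a set of area $|B|$ yields the bound $\ll |B|\cdot|B|^{1-\delta} = |B|^{2-\delta}$, exactly matching \cref{ex:BR}.

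The main obstacle is the geometric step for a \emph{general} measurable set $B$ rather than a ball: one must justify the bound $|P_{\infty,u}(\bx)\cap B|\ll \min\{|B|,\ u|B|^{1/2}/\|\bx\|\}$ uniformly. The cleanest route is to avoid this entirely by reducing to balls via monotonicity: since $\Phi-c_\Gamma$ need not have a sign, this does not literally work, but $|\Phi_\fab(t)-c_\Gamma|\le \tilde\Phi_\fab(t)+c_\Gamma$ with the right-hand side controlled, so one can bound the double integral by the same expression with $\one_B$ replaced by a ball of the same volume only after using that $\int_{P_{t,u}(\bx)}\one_B(\by)\,d\by \le \int_{P_{t,u}(\bx)}\one_{B_{|B|^{1/2}}}(\by)\,d\by$ fails in general too — so instead I use the sharp fact that a strip of width $w$ intersects \emph{any} set of measure $|B|$ in measure $\le\min\{|B|, w \cdot \ell\}$ where $\ell$ is the length of the intersection, together with $\int (\text{length in direction }\bx)\,d(\text{position along }\bx^\perp) \le |B|$, i.e. Fubini in the rotated frame $k_\theta$. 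This makes the estimate work for arbitrary $B$ and reproduces the $|B|^{2-\delta}$ bound, completing the proof.
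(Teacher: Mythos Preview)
Your setup is correct and matches the paper: expand the square, use \eqref{F1_Lemma} for the cross term, and apply \cref{coro:2ndmoment} for the square term, so everything reduces to bounding the diagonal contribution (which is $\le 2c_\Gamma|B|\ll |B|^{2-\delta}$) and the double integral
\[
\iint_{\R^2\times\R^2}\bigl(\Phi(|\bx\wedge\by|)-c_\Gamma\bigr)\one_B(\bx)\one_B(\by)\,d\bx\,d\by.
\]

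The gap is in your treatment of this double integral for a \emph{general} measurable $B$. The integration-by-parts estimate \eqref{pause} and the computation in \cref{ex:BR} rely on the bound $|P_{\infty,u}(\bx)\cap B_R|\ll Ru/\|\bx\|$, which uses that the strip of width $2u/\|\bx\|$ meets the disk in a region of length $\le 2R$. For an arbitrary measurable $B$ there is no diameter control, so no such pointwise bound on $|P_{\infty,u}(\bx)\cap B|$ is available. Your proposed fix via Fubini, that $\int(\text{length in direction }\bx)\,d(\text{position along }\bx^\perp)=|B|$, is just the statement $|B|=|B|$ in rotated coordinates; it does not bound $|P_{\infty,u}(\bx)\cap B|$ in terms of $u$ and $|B|$ alone, and the argument as written does not close.

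The paper avoids this issue entirely by invoking Schmidt's lemma (\cref{lem:schmidthm3}): for any measurable $S\subset\R^2$ and any nonnegative nonincreasing $\chi$ with $\int_0^\infty\chi<\infty$,
\[
\iint \chi(|\bx\wedge\by|)\one_S(\bx)\one_S(\by)\,d\bx\,d\by\ \le\ 8|S|\int_0^\infty\chi(t)\,dt.
\]
To apply it, one introduces a cutoff $\alpha=\alpha(|B|)$ and bounds $|\Phi(t)-c_\Gamma|\le \chi(t)+M\alpha^{-\delta}\one_{\{t>\alpha\}}$, where $\chi$ is the nonincreasing function equal to $Mt_0$ on $[0,t_0)$, $Mt^{-\delta}$ on $[t_0,\alpha)$, and $0$ on $[\alpha,\infty)$. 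Schmidt's lemma handles the $\chi$ piece, giving $\ll |B|\,\alpha^{1-\delta}$; the constant tail piece contributes $\ll |B|^2\alpha^{-\delta}$ trivially. Choosing $\alpha=|B|$ balances these to $|B|^{2-\delta}$. This is the missing ingredient in your argument.
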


To prove \cref{prop:11}, we use the following result of Schmidt for $n=2$.
\begin{lm}[\cite{Schmidt} Theorem 3]\label{lem:schmidthm3}
	Let $S$ be a Lebesgue mesurable set in $\R^2$ with characteristic function $\one_S$ and volume $|S|$. Then
	\[\iint \chi(|\bx \wedge \by|) \one_S(\bx) \one_S(\by)  \leq 8 |S| \int_0^\infty \chi(t) \,dt\]
	for every nonnegative, nonincreasing function $\chi(t)$ defined for $t\geq 0$ whose integral $\int_0^\infty \chi(t)\,dt$ converges.
\end{lm}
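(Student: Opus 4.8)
The plan is to reduce the general weighted inequality to the single ``sharp-cutoff'' case $\chi=\one_{[0,T)}$ by a layer-cake decomposition, and then to prove that special case by a symmetrization together with an elementary area estimate. Since $\chi$ is nonnegative and nonincreasing with $\int_0^\infty\chi(t)\,dt<\infty$, each superlevel set $\{t\geq0:\chi(t)>\lambda\}$ is an interval $[0,T(\lambda))$, with $T(\lambda)<\infty$ for every $\lambda>0$ because $\int\chi<\infty$ forces $\chi(t)\to0$. Writing $\chi(t)=\int_0^\infty\one_{[0,T(\lambda))}(t)\,d\lambda$ and applying Tonelli's theorem, I would reduce matters to the claim that for each fixed $T>0$,
$$
\iint \one_{\{|\bx\wedge\by|\leq T\}}\,\one_S(\bx)\,\one_S(\by)\,d\bx\,d\by\leq 8\,|S|\,T ,
$$
since integrating this in $\lambda$ and using $\int_0^\infty T(\lambda)\,d\lambda=\int_0^\infty\chi(t)\,dt$ recovers the full inequality.

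For the cutoff case I would exploit the symmetry of the integrand under $\bx\leftrightarrow\by$: up to the measure-zero diagonal $\{\|\bx\|=\|\by\|\}$, the integral equals twice its restriction to $\{\|\by\|\leq\|\bx\|\}$. Bounding $\one_S(\by)\leq1$ on the inner variable and fixing $\bx$ with $r=\|\bx\|$, I would write $\by=a\omega+b\omega^\perp$ in the orthonormal frame adapted to $\bx$, where $\omega=\bx/r$; then $|\bx\wedge\by|=r|b|$, so the constraint $|\bx\wedge\by|\leq T$ becomes $|b|\leq T/r$. The inner integral is therefore at most the area of the strip-disk region $\{a^2+b^2\leq r^2,\ |b|\leq T/r\}$, which I would bound by $4T$: when $T<r^2$ one integrates $2\sqrt{r^2-b^2}\leq 2r$ over $|b|\leq T/r$, and when $T\geq r^2$ the region is the full disk of area $\pi r^2\leq 4r^2\leq 4T$. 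Integrating $4T$ over $\bx\in S$ and restoring the factor $2$ yields the bound $8|S|T$.

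The argument is essentially elementary, so there is no deep obstacle; the delicate points are purely a matter of bookkeeping. I expect the main subtlety to be pinning down the constant exactly: the symmetrization must be arranged so that the inner integration region is genuinely compact (this is precisely what forces the restriction $\|\by\|\leq\|\bx\|$ and costs the factor $2$), and the degenerate regime $T\geq r^2$ has to be treated separately, using $\pi<4$, to preserve the clean area bound $4T$. I would also record the measurability and Tonelli hypotheses used in the layer-cake step, all of which hold because every integrand appearing is nonnegative and jointly measurable.
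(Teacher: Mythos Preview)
Your proof is correct. The paper itself does not prove this lemma at all; it simply cites it as Theorem~3 of Schmidt and uses it as a black box in the proof of Proposition~\ref{prop:11}. So there is no ``paper's own proof'' to compare against.

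For what it is worth, your argument is essentially the standard one (and essentially Schmidt's, specialized to $n=2$): layer-cake to reduce to an indicator cutoff, symmetrize to force $\|\by\|\leq\|\bx\|$ at the cost of a factor $2$, drop $\one_S(\by)$, and bound the area of the strip-in-disk $\{\|\by\|\leq r,\ |\bx\wedge\by|\leq T\}$ by $4T$ via the two regimes $T<r^2$ and $T\geq r^2$. The bookkeeping you flag (measurability/Tonelli, the null diagonal, the degenerate case using $\pi<4$) is all handled correctly.
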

\begin{proof}[Proof of \cref{prop:11}]
	Let $\alpha(|B|)$ be a function increasing with $|B|$ that we will choose later. Choose $|B|$ large enough so that for $t_0 < \alpha(|B|)$ we can use \cref{asymptot} (cf. \cref{eq:t0Phi})  to bound
	\[|\Phi(t) - c_\Gamma| \leq \chi(t) +  \begin{cases} 0 & t\leq \alpha(|B|) \\
		M\alpha(|B|)^{-\delta} & t > \alpha(|B|) \end{cases}\]
		where for 
		\[\chi(t) = \begin{cases} M t_0  & 0 \leq  t< t_0 \\
		M t^{-\delta} &  t_0 \leq t < \alpha(|B|) \\ 
		0 & t \geq \alpha(|B|). \end{cases}\] 
	Notice $\chi(t)$ satisfies the assumptions of \cref{lem:schmidthm3} with $\int_0^\infty \chi(t)dt = M \left(t_0^2  - \frac{t_0^{1-\delta}}{1-\delta}\right) + \frac{M}{1-\delta} \alpha(|B|)^{1-\delta} .$
	Combining \cref{F1_Lemma}, \cref{eq:split}, and \cref{lem:schmidthm3} we compute
	\begin{align*}
		\int_C (\det(A) \Theta_{\Lambda; \one_B}(A) - c_\Gamma &|B|)^2\, d m(A)
		= \int_C \Theta_{\Lambda,\Lambda;\one_B}(A)\det(A)^2 dm(A) - c_\Gamma^2 |B|^2\\
		 &\leq  c_\Gamma |B| +
c_\Gamma \iint_{\R^2\times\R^2} \op{\Phi_\fab(|\bx\wedge\by|) - c_\Gamma}\one_B(\bx)\one_B(\by)\, d\bx\, d\by,\\
&\ll |B|(1 + \alpha(|B|)^{1-\delta}) + |B|^2 \alpha(|B|)^{-\delta}. 
	\end{align*}
	Choosing $\alpha(|B|) = |B|$, and noting that the implied constants only depend on $\Lambda$ completes the proof. 
\end{proof}

\subsection{Effective count via interpolation}

We can now derive the main result of this section. The proof follows the same lines as that of \cite[Theorem 2]{Schmidt} and \cite[Theorem 6.1]{KY21}. We include it for the convenience of the reader.

	\begin{thm}\label{countN}
Let $\G<G$ be a lattice containing $-I$ with scaled discrete orbit $\Lambda$. Let $\mathcal{B}$ be a linearly ordered family of Borel sets of finite volume in $\R^2$. Let $\psi$ be a positive non-increasing function so that $e^{t(2-\delta)} \psi(t)$ is eventually non-decreasing and $\int_1^\infty \psi(t)\,dt < \infty.$ Then for almost every linear transformation $A$
\[
|A(\Lambda)\cap B||\det(A)|= c_\G|B| + O\op{\frac{|B|^{1-\delta/2}\log^{1/2}(|B|)}{\psi^{1/2}(\log(|B|))}}
\]
\end{thm}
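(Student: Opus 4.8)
The plan is to follow Schmidt's classical dyadic-blocks argument, using \cref{prop:11} as the second-moment input and a two-step interpolation (first along a discrete sequence of volumes via Borel--Cantelli, then filling in arbitrary volumes via monotonicity of $\mathcal{B}$). First I would set $F(A,B) := \det(A)\,\Theta_{\Lambda;\one_B}(A) = |A(\Lambda)\cap B|\,|\det(A)|$, so that \cref{prop:11} reads $\int_C (F(A,B) - c_\G|B|)^2\, dm(A) \ll_\Lambda |B|^{2-\delta}$ whenever $|B| > C_\Lambda$. Fix a reference monotone family $\mathcal{B}$; since it is linearly ordered by inclusion and indexed (after reparametrization) by volume, for each $v$ large there is a unique $B_v\in\mathcal{B}$ (up to null sets) with $|B_v| = v$, and $v\mapsto B_v$ is nested. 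I would then work on the dyadic-type scale $v_k$ determined by $\log v_k = k/\,$(something growing), but it is cleaner to phrase it directly in the logarithmic variable: set $t = \log|B|$ and prove the estimate along a sequence $t_k$ chosen so that $\sum_k$ of the relevant Chebyshev bounds converges.

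Concretely, for the Borel--Cantelli step I would apply Chebyshev to \cref{prop:11}: for any threshold $h_k>0$,
\[
m\!\left(\Big\{A\in C : |F(A,B_{v_k}) - c_\G v_k| > h_k\Big\}\right) \le \frac{M_\Lambda\, v_k^{2-\delta}}{h_k^2}.
\]
One wants $\sum_k v_k^{2-\delta}/h_k^2 < \infty$ while keeping $h_k$ as small as possible; taking $h_k = v_k^{1-\delta/2} (\log v_k)^{1/2}\psi(\log v_k)^{-1/2}$ and $v_k = e^k$ (so $\log v_k = k$) makes the series $\sum_k e^{k(2-\delta)}\cdot e^{-k(2-\delta)} k^{-1}\psi(k) = \sum_k \psi(k)/k$, which converges because $\psi$ is non-increasing and $\int_1^\infty \psi < \infty$ forces $\sum_k\psi(k) < \infty$, hence certainly $\sum_k \psi(k)/k < \infty$. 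By Borel--Cantelli, for $m$-almost every $A$ there is $k_0(A)$ such that for all $k\ge k_0(A)$,
\[
\big|\,|A(\Lambda)\cap B_{v_k}|\,|\det(A)| - c_\G v_k\,\big| \le v_k^{1-\delta/2}\,\frac{\log^{1/2} v_k}{\psi^{1/2}(\log v_k)}.
\]
Since $m$ and the Euclidean volume $dA$ on the cone have the same null sets, this holds for Lebesgue-a.e.\ linear transformation $A$.

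For the interpolation step I would fix such an $A$ and an arbitrary $B\in\mathcal{B}$ of large volume $v=|B|$, choose consecutive sample points $v_{k} \le v \le v_{k+1}$ with $v_{k+1}/v_k = e$, and use that $\mathcal{B}$ is nested: $B_{v_k}\subseteq B\subseteq B_{v_{k+1}}$ (up to null sets), hence $|A(\Lambda)\cap B_{v_k}| \le |A(\Lambda)\cap B| \le |A(\Lambda)\cap B_{v_{k+1}}|$. Sandwiching and using the a.e.\ bound at $v_k$ and $v_{k+1}$ gives
\[
|A(\Lambda)\cap B||\det(A)| = c_\G v + O\!\left(c_\G(v_{k+1}-v_k) + v_{k+1}^{1-\delta/2}\frac{\log^{1/2}v_{k+1}}{\psi^{1/2}(\log v_{k+1})}\right).
\]
Here $v_{k+1} - v_k \asymp v$ would be too crude; this is exactly why one must take the sample ratio $v_{k+1}/v_k$ tending to $1$ rather than fixed at $e$. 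The standard fix (Schmidt, and \cite{KY21}) is to let the ratio shrink: take $v_k$ with $\log v_{k+1} - \log v_k \asymp v_k^{-\delta/2}(\log v_k)^{1/2}\psi(\log v_k)^{-1/2}$, so that $v_{k+1}-v_k \ll v_k^{1-\delta/2}(\log v_k)^{1/2}\psi(\log v_k)^{-1/2}$, matching the main error term; one must then re-check the Borel--Cantelli series, which now has extra terms coming from the denser sampling, and verify it still converges using the hypothesis that $e^{t(2-\delta)}\psi(t)$ is eventually non-decreasing together with $\int_1^\infty\psi < \infty$. The monotonicity of $e^{t(2-\delta)}\psi(t)$ is what guarantees $\psi$ doesn't decay so fast that $\psi^{-1/2}$ blows up faster than the gain from $|B|^{-\delta/2}$, keeping the error $o(|B|)$. \textbf{The main obstacle} is precisely this bookkeeping in the interpolation: calibrating the spacing of the sample volumes so that the discretization error from the nested sandwich matches (and does not dominate) the probabilistic error, while simultaneously keeping the Borel--Cantelli series summable — the two requirements pull the spacing in opposite directions, and threading the needle is where the hypotheses on $\psi$ (non-increasing, integrable, with $e^{t(2-\delta)}\psi(t)$ eventually non-decreasing) get used in full. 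Everything else is a routine application of \cref{prop:11}, Chebyshev, Borel--Cantelli, and the equivalence of $m$ and Lebesgue measure on the cone.
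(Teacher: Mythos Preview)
Your outline has the right ingredients (\cref{prop:11}, Chebyshev, Borel--Cantelli, interpolation via the nested family) but the mechanism you propose for the interpolation step does not work, and in fact misidentifies Schmidt's actual device.

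If you sample at individual volumes $v_k$ and apply Chebyshev with threshold $h_k \asymp v_k^{1-\delta/2}(\log v_k)^{1/2}\psi(\log v_k)^{-1/2}$, then as you note the sandwiching error forces $v_{k+1}-v_k$ to be of the same order. But then the number of sample points with $\log v_k\in[n,n+1]$ is $\asymp e^{n\delta/2}\psi(n)^{1/2}n^{-1/2}$, and the Borel--Cantelli series becomes
\[
\sum_k \frac{\psi(\log v_k)}{\log v_k}\ \asymp\ \sum_n \frac{e^{n\delta/2}\,\psi(n)^{3/2}}{n^{3/2}},
\]
which diverges already for $\psi(t)=t^{-2}$. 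So the ``shrink the sampling ratio'' fix cannot close the gap; the two constraints you identify really are incompatible for pointwise sampling.

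The paper (following Schmidt) does something genuinely different: it never applies Borel--Cantelli at individual volumes. Instead one works with the dyadic differences ${}_{N_1}S_{N_2}(A)=|A\Lambda\cap(B_{N_2}\setminus B_{N_1})|-c_\G(N_2-N_1)|\det A|^{-1}$, applies \cref{prop:11} to each, and sums the second moments over \emph{all} dyadic pairs $(N_1,N_2)=(\ell 2^t,(\ell+1)2^t)$ with $N_2\le 2^T$. That sum telescopes geometrically to $O(2^{(2-\delta)T})$. A single Chebyshev/Borel--Cantelli step on this aggregate sum controls all blocks at once. Then for any $N\le 2^T$ one writes $[0,N)$ as a disjoint union of at most $T$ dyadic blocks and applies Cauchy--Schwarz:
\[
|S_N(A)|^2 \le T\cdot\!\!\sum_{(N_1,N_2)}|{}_{N_1}S_{N_2}(A)|^2,
\]
which is precisely the source of the $\log^{1/2}$ factor (from $\sqrt{T}\asymp\sqrt{\log N}$). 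The final interpolation is only between consecutive \emph{integers}, where the gap is $1=o(N^{1-\delta/2})$ and costs nothing. The hypothesis that $e^{t(2-\delta)}\psi(t)$ is eventually non-decreasing is used only in this last integer-to-real step, not to rescue a divergent series.
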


\begin{proof}
	Without loss of generality, we may assume $\mathcal{B}$ has sets of arbitrarily larges volumes. By \cite[Lemma 1]{Schmidt} we may assume without loss of generality that $\{|B|: B\in \mathcal{B}\} = \R^+$. Thus for all $N\in \mathbb{N}_{\geq 1}$ there exists $B_N \in \mathcal{B}$ so that $|B_N| = N$. 
	
	Define \[S_N(A) = |A\Lambda \cap B_N| - c_\Gamma N |\det(A^{-1})|\]
	and for $0\leq N_1 < N_2 $
	\[{}_{N_1}{S}_{N_2}(A) =  |A\Lambda \cap (B_{N_2}\setminus B_{N_1})| - c_\Gamma (N_2-N_1) |\det(A^{-1})|.\]
	
	For $T\geq 3$ set 
	\[\mathcal{K}_T = \left\{(N_1, N_2)\in \Z^2\, |\, 0\leq N_1<N_2\leq 2^T,\, N_1 = \ell 2^t \text{ and } N_2 = (\ell+1) 2^t \text{ for some } \ell, t \in \mathbb{N}_{\geq 0}\right\}.\]
	By \cref{prop:11} with $B= B_{N_2} \setminus B_{N_1}$, notice that each value of $N_2-N_1 = 2^t$ occurs $2^{T-t}$ times so we have 
	\begin{equation}\label{lem:12}
	\sum_{(N_1,N_2)\in \mathcal{K}_T}\int_C \left|{}_{N_1}{S}_{N_2}(A)\right|^2 |\det(A)|^2\,d m(A) \leq M_\Lambda \sum_{t=0}^T 2^{T - t} 2^{t(2-\delta)}\leq 8 M_\Lambda 2^{(2-\delta)T}, 
	\end{equation}
	where the last inequality follows from the geometric series and $\frac{1}{4} \leq 2^{1/3} \leq 2^{1-\delta} \leq 2$.
	
	Next let $\mathcal{E}_T \subseteq C$ be the set of all $A \in C$ so that
	\begin{equation}\label{eq:6.2}
		\sum_{(N_1,N_2)\in \mathcal{K}_T} |{}_{N_1}{S}_{N_2}(A)|^2 |\det(A)|^2 > \frac{2^{(2-\delta)T}}{48 \psi((T-1)\log(2))}.
	\end{equation}
	Applying Chebyshev's inequality and \eqref{lem:12} implies
	\begin{align}\label{eq:6.3}
		 m(\mathcal{E}_T) &< \frac{48 \psi((T-1)\log(2))}{ 2^{(2-\delta)T}} \int_{\mathcal{E}_T} 
		\sum_{(N_1,N_2)\in \mathcal{K}_T} 
		|{}_{N_1}{S}_{N_2}(A)|^2 |\det(A)|^2\,d{m}(A) \nonumber\\
		& \leq 48\cdot 8 M_\Lambda \psi((T-1) \log(2)).
	\end{align}
	By the Borel--Cantelli lemma and integrability of $\psi$, $m(\mathcal{E}_\infty) = 0$ for $\mathcal{E}_\infty = \limsup \mathcal{E}_T$. Define $C\setminus \mathcal{E}_\infty$ to be the full measure set (with respect to $\,dA$) of transformations where the discrepancy is small.
	
	For $N\leq 2^T$, we can write $[0,N) \subseteq \bigsqcup_{(N_1,N_2) \in \mathcal{I}} [N_1,N_2)$ where $\mathcal{I} \subset \mathcal{K}_T$ is given by 
		\[\mathcal{I} = \left\{(0\cdot 2^1 , 1\cdot 2^1)\right\} \cup\left\{ ( 2^{t}, 2\cdot 2^{t})\right\}_{t=1}^{T-1}.\] Thus we write
	\[S_N(A) \leq \sum_{(N_1,N_2) \in \mathcal{I}} {}_{N_1}{S}_{N_2}(A).\]
By (\ref{eq:6.2}), for any $A \notin \mathcal{E}_T$ and any $N < 2^T$,
	\begin{align*}
	|S_N(A)|^2 |\det(A)|^2 &\leq \left|\sum_{(N_1,N_2) \in \mathcal{I}} {}_{N_1}{S}_{N_2}(A)\right|^2 |\det(A)|^2\\
	&  \leq |\mathcal{I}| \sum_{(N_1,N_2) \in \mathcal{K}_T} \left| {}_{N_1}{S}_{N_2}(A)\right|^2 |\det(A)|^2\\
	& \leq T \frac{2^{(2-\delta)T}}{48 \psi((T-1)\log(2))} .
	\end{align*}
Consider $A \in C\setminus  \mathcal{E}_\infty$. Choose $T_A \in \mathbb{N}$ so that for all $T\geq T_A$, $A \notin \mathcal{E}_T$. That is for all $T \geq T_A$ and for any $N < 2^T$, $|S_N(A)|^2 |\det(A)|^2  \leq \frac{T 2^{(2-\delta)T}}{48 \psi((T-1)\log(2))}.$

In order to interpolate from $B_N$ to any $B$, we will set $N_A = \max\{2^{T_A}, N_0\}$ where the assumption that $e^{t(2-\delta)} \psi(t)$ is eventually non-decreasing ensures there exists some $N_0$ so that for all $N\geq N_0$,
\begin{equation}\label{eq:inter}
	\frac{(N+1)^{(1-\frac{\delta}{2})}\sqrt{\log(N+1)}}{2\psi^\frac{1}{2}(\log(N+1))} + 1 \leq \frac{N^{(1-\frac{\delta}{2})}\sqrt{\log(N)}}{\psi^\frac{1}{2}(\log(N))}.
\end{equation}

 Whenever $N > N_A$, we have $N > 2^{T_A}$ so we can choose an integer $T \geq T_A$ so that $2^{T-1} \leq N \leq 2^T$. For such $N$ we have
\[|S_N(A)|^2 |\det(A)|^2 \leq \left(\frac{\log(N)}{\log(2)} + 1\right) \frac{4 N^{2-\delta}}{48\psi(\log(N))} \leq  \frac{N^{2-\delta} \log(N)}{4\psi(\log(N))}\]
where we used $\psi$ in non-increasing, $2^{(2-\delta)T} = 4 \cdot 2^{2(T-1)} 2^{-\delta T}$, and $\left(\frac{\log(N)}{\log(2)} + 1\right) \leq 3 \log(N)$ for all $N\geq 2$.

We have now shown for all $N > 2^{T_A}$ that
\[|S_N(A)| |\det(A)|\leq  \frac{N^{(1-\frac{\delta}{2})} \sqrt{\log(N)}}{2\psi^\frac{1}{2}(\log(N))}\]

For any $B\in\mathcal{B}$ with $|B| \geq N_A$ there exists an integer $N \geq N_A-1$ so that $N \geq 2^{T_A}$ and $B_{N} \subseteq B \subseteq B_{N+1}$. We then interpolate and use \eqref{eq:inter} to obtain
\begin{align} \label{eq:countNbound}
\Big||A\Lambda \cap B| - c_\Gamma |B| |\det(A^{-1})|\Big| & \leq  \max\{|S_{N}(A)|, |S_{N+1}(A)|\} + \frac{c_\G}{|\det(A)|}\nonumber\\
& \leq   \frac{|B|^{(1-\frac{\delta}{2})}\sqrt{\log(|B|)}}{\psi^\frac{1}{2}(\log(|B|)) |\det(A)|}.\end{align}

Since every $\tilde{A}$ with $0<|\det(\tilde{A})|\leq 1$ is of the form $\tilde{A} = A \gamma$ for $A\in C$ and $\gamma \in \Gamma$, and since $\Gamma$ is enumerable, \eqref{eq:countNbound} holds for almost every linear transformation $\tilde{A}$ with $|\det(\tilde{A})|\leq 1$. Now consider $C$ a linear transformation with $\det(C) = c$. Applying \eqref{eq:countNbound} to the family of sets $\{C^{-1}B| B\in \mathcal{B}\}$, 
\[\Big||A\Lambda \cap C^{-1}B| - c_\Gamma |C^{-1}B| |\det(A^{-1})|\Big|=\Big||CA\Lambda \cap B| - c_\Gamma |B| |\det((CA)^{-1})|\Big|\]
the fact that $\det(C)$ can be taken arbitrarily large, we conclude \eqref{eq:countNbound} holds for almost every linear transformation.
%
\end{proof}

\subsection{Application to dilated sets} \label{sec:dilates}
We obtain the discrete lattice orbit version of \cref{Thm:1} by applying \cref{countN} to dilated sets.
\begin{thm}\label{Thm:1DLO}
Let $\Lambda$ be a scaled discrete $\G$-orbit and let $\Omega\subset\R^2$ be a bounded Borel set that contains the origin. Consider its dilates $\Omega_R=R\cdot \Omega$. Then for almost every linear transformation $A$ we have
\[
|A(\Lambda)\cap \Omega_R||\det(A)| = c_\G |\Omega| R^2 + O\left(R^{2-\delta}  \log^{3/2}(R)\right),
\]
where $\delta$ is as in (\ref{def:delta}).
\end{thm}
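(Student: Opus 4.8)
The plan is to deduce \cref{Thm:1DLO} from \cref{countN} by choosing the family of Borel sets to be the dilates themselves, together with an appropriate auxiliary function $\psi$. Concretely, I would set $\mathcal{B} = \{\Omega_R : R > 0\}$, which is linearly ordered by inclusion since $0 \in \Omega$ (so $\Omega_{R_1} \subseteq \Omega_{R_2}$ whenever $R_1 \le R_2$), and each $\Omega_R$ has finite volume $|\Omega_R| = |\Omega| R^2$ because $\Omega$ is bounded and Borel. The only real choice is $\psi$: I want the error term from \cref{countN},
$$
O\!\left(\frac{|B|^{1-\delta/2}\log^{1/2}(|B|)}{\psi^{1/2}(\log |B|)}\right),
$$
to translate, upon substituting $|B| = |\Omega| R^2$, into $O(R^{2-\delta}\log^{3/2} R)$. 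Since $|B|^{1-\delta/2} = (|\Omega| R^2)^{1-\delta/2} \asymp R^{2-\delta}$ and $\log |B| \asymp \log R$, I need $\psi(\log|B|)^{-1/2}\log^{1/2}|B| \asymp \log^{3/2} R$, i.e. $\psi(\log|B|) \asymp \log^{-2} R \asymp (\log|B|)^{-2}$. So I would take $\psi(t) = t^{-2}$ (adjusted near $t = 1$ to be positive and to make $\int_1^\infty \psi < \infty$, e.g. $\psi(t) = (1+t)^{-2}$).

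Next I would verify the hypotheses of \cref{countN} for this $\psi$: it is positive and non-increasing; $\int_1^\infty (1+t)^{-2}\,dt < \infty$; and $e^{t(2-\delta)}\psi(t) = e^{t(2-\delta)}(1+t)^{-2}$ is eventually non-decreasing since the exponential dominates the polynomial factor (for $t$ large its logarithmic derivative $(2-\delta) - 2/(1+t)$ is positive as $\delta < 2$). With these checks done, \cref{countN} applies and gives, for almost every linear transformation $A$,
$$
|A(\Lambda) \cap \Omega_R|\,|\det A| = c_\G |\Omega_R| + O\!\left(\frac{|\Omega_R|^{1-\delta/2}\log^{1/2}(|\Omega_R|)}{\psi^{1/2}(\log|\Omega_R|)}\right).
$$
Substituting $|\Omega_R| = |\Omega| R^2$ and simplifying the error term using the computation above yields
$$
|A(\Lambda)\cap \Omega_R|\,|\det A| = c_\G |\Omega| R^2 + O\!\left(R^{2-\delta}\log^{3/2} R\right),
$$
where the implied constant depends on $\Lambda$ and $\Omega$. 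One small bookkeeping point: \cref{countN} is stated for a family of sets of \emph{finite} volume, and it shrinks the exceptional set once for the whole family, so the "almost every $A$" is uniform over all $R$ simultaneously — exactly what is claimed.

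I do not expect a serious obstacle here; this is essentially a clean specialization. The only mildly delicate point is matching the logarithmic powers: one must be careful that $\log|\Omega_R| = 2\log R + \log|\Omega| = (2 + o(1))\log R$, so $\psi(\log|\Omega_R|)^{-1/2} = ((1+\log|\Omega_R|)^2)^{1/2}\asymp \log R$ and hence the combined error is $R^{2-\delta} \cdot \log^{1/2} R \cdot \log R = R^{2-\delta}\log^{3/2} R$, which is precisely the asserted bound. If one wanted to be fully careful about the regime of small $R$ (where $\log R$ is negative or $|\Omega_R| < C_\Lambda$), one simply notes that the statement is asymptotic as $R \to \infty$, so only large $R$ matter and the hypothesis $|B| > C_\Lambda$ in \cref{prop:11} is automatically satisfied.
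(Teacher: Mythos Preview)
Your proposal is correct and matches the paper's own proof essentially verbatim: the paper also applies \cref{countN} to the family $\mathcal{B}=\{\Omega_R:R\ge 0\}$ with $\psi(t)=t^{-2}$, noting it is integrable, non-increasing, and that $e^{t(2-\delta)}\psi(t)$ is eventually non-decreasing. Your additional bookkeeping (the $(1+t)^{-2}$ tweak, the linear-ordering check via $0\in\Omega$, and the explicit matching of logarithmic powers) is more detailed than what the paper records, but the argument is the same.
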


\begin{proof}
	Let $\psi(t) = t^{-2}$, which is integrable, non-increasing for $t > 1$, and satisfies that $e^{t(2-\delta)} \psi(t)$ is eventually non-decreasing. Now apply \cref{countN} to the family $\mathcal{B} = \{\Omega_R : R\in \R_{\geq 0}\}$ of dilates.
\end{proof} 
 
We recall the following known estimates from \cite{BNRW}. For simplicity, suppose that $\G$ has trivial residual spectrum, i.e., $\delta = \frac{2}{3}$. By \cref{Thm:1DLO}, for \textit{almost every} linear transformation $A\in{\rm GL}_2(\R)$ we find
	\[|\det(A)||A(\Lambda) \cap \Omega_R | = c_\Gamma |\Omega| R^2 + O\left(R^{\frac{4}{3}}\log^{3/2}(R)\right)\] 
	\color{black}
while in \cite{BNRW} it is shown that for {\em every} $A\in G$ (hence with $\det(A)=1$), we have 
\[
|A(\Lambda)\cap \Omega_R| = c_\Gamma|\Omega|R^2 + O\left(R^\alpha\right),
\]
with
\[
\alpha =\begin{dcases} \frac43 & \text{ when }\Omega\text{ is the unit disk;}\\
\frac{8}{5} &\text{ when } \Omega\text{ is a star-shaped domain with smooth boundary;}\\
\frac{7}{4}+\eps & \text{ when }\Omega\text{ is a star-shaped domain with piecewise Lipschitz boundary;}\\
\frac{12}{7}+\eps & \text{ when }\Omega\text{ is a sector.}
\end{dcases}
\] 

\section{Upper bounds on pair relationships}\label{sec:friendsbounds}
In this section, we obtain asymptotic upper bounds for the number of ordered pairs $(\bx,\by)\in\Lambda_1\times\Lambda_2$, where 
\begin{itemize}
\item $\Lambda_1$, $\Lambda_2$ are scaled discrete $\G$-orbits;
\item the first vector $\bx$ is restricted to $\bx \in B_R\cap \Lambda_1$;
\item the second vector $\by\in\Lambda_2$ has a prescribed metric relationship to $\bx$. 
\end{itemize}

The first relationship we examine is given by those vectors $\by \in \Lambda_2$ distinct from $\bx$ that are within distance $\epsilon$ of $\bx$. We say that two vectors $\bx,\by$ are {\bf $\eps$-friends} if $\|\bx-\by\|<\eps$. The second relationship examined is that of a determinant, first studied in \cite{AFM22}. A key aspect of these results is that they hold for every Veech surface. This will be used in Appendix~\ref{AppJon} to extend results previously only known for generic translation surface to every Veech surface.

\subsection{$\eps$-friends}

The main result of this section is

\begin{thm}\label{main}
Let $\Lambda_\fa$, $\Lambda_\fb$ be two scaled discrete $\G$-orbits. There exists a constant $C$ (depending only on $\G$) such that
\[
\limsup_{R\to\infty} \frac{|\{(\bx,\by)\in\Lambda_\fa\times\Lambda_\fb : \bx\in B_R,\, \by\in B^*_\eta(\bx)\}|}{|B_R|} < C|B_\eta|.
\]
\end{thm}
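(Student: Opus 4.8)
The plan is to estimate the counting quantity by dropping down to the second-moment formula and the geometric estimate from \cref{ex:eps}. First I would note that, since $\Theta_{\Lambda_\fa,\Lambda_\fb;f}$ with $f = \one_{B_R}(\bx)\one_{B_\eta^*(\bx)}(\by)$ counts exactly the ordered pairs $(\bx,\by)$ in the statement (after acting by a transformation $A$, and noting that for $A$ near the identity the count is comparable), I would average the count over the cone $C$ using \cref{thm-useful}. Concretely, I would apply \eqref{eq:split} to $f = \one_{B_R}\cdot\one_{B_\eta^*(\cdot)}$. The first (``main'') term contributes $c_\Gamma^2 \iint f = c_\Gamma^2 |B_R|\,|B_\eta| \asymp (\eta R)^2$; the diagonal term $\tfrac{\delta_\fab c_\Gamma}{2}\int (f(\bx,\bx)+f(\bx,-\bx))\,d\bx$ vanishes because $B_\eta^*(\bx)$ excludes $\bx$ (and for $\eta$ small does not meet $-\bx$, since $\bx\in S_M\setminus\{\bo\}$ has $\|\bx\|\geq$ some positive minimum); and by \cref{ex:eps} the error term $\iint(\Phi_\fab(|\bx\wedge\by|)-c_\Gamma)f$ is $O((\eta R)^{2-\delta})$. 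Thus the $m$-average of $\det(A)^2\,\Theta_{\Lambda_\fa,\Lambda_\fb;f}(A)$ is $O((\eta R)^2)$, uniformly in $R$.

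Next I would convert this bound on the average into a statement about the untransformed count, i.e.\ the value of $\Theta_{\Lambda_\fa,\Lambda_\fb;f}$ at $A = I$. Here I would use the observation that $\Theta_{\Lambda_\fa,\Lambda_\fb;f}$ is, by \cref{highbound}, bounded on $G/\Gamma$, and that near the identity coset the function does not drop: for $A$ in a fixed neighborhood $U$ of the identity coset, $\Theta_{\Lambda_\fa,\Lambda_\fb;f}(I) \ll \Theta_{\Lambda_\fa,\Lambda_\fb;\tilde f}(A)$ where $\tilde f$ is a slightly fattened version of $f$ (replace $B_R$ by $B_{2R}$ and $B_\eta^*$ by $B_{2\eta}^*$), since a pair counted by $f$ at $I$ is counted by $\tilde f$ at any $A\in U$ provided $U$ is small enough relative to the scales involved. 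Integrating over $U$ (which has positive $m$-measure bounded below) and applying the average bound to $\tilde f$ gives
$$
|\{(\bx,\by)\in\Lambda_\fa\times\Lambda_\fb : \bx\in B_R,\ \by\in B_\eta^*(\bx)\}| \ll_U (\eta R)^2,
$$
with the implied constant independent of $\eta$ and $R$.

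Finally, dividing by $|B_R| = \pi R^2$ and taking $\limsup_{R\to\infty}$ gives a bound of the form $C\eta^2$; choosing $\eta$ small enough that $C\eta^2 < \eps$ completes the proof. The main obstacle I anticipate is the passage from the averaged estimate to the pointwise value at $I$: one must check that the fattening of $f$ is harmless (it only changes constants, not the power of $\eta R$) and that the neighborhood $U$ can be chosen uniformly, i.e.\ independent of $R$; this is where the precise well-roundedness of Euclidean balls — highlighted in \cref{sec:disc} as the key input for this section — does the work. The diagonal-term vanishing also requires the elementary but necessary remark that holonomy vectors are bounded away from $\bo$, so that for $\eta$ below that threshold $B_\eta^*(\bx)$ cannot contain $-\bx$.
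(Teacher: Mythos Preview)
Your proposal is correct and follows essentially the same approach as the paper: fatten the indicator (the paper uses the Cartan neighborhood $\cU_\alpha=K\{a_y:y\in(e^{-\alpha},e^\alpha)\}K$ with dilation factor $e^\alpha$ in place of your factor $2$), bound $\Theta(I)$ by the fattened transform on that neighborhood, integrate over the cone, and apply \eqref{eq:split} together with \cref{ex:eps}. One small correction: the diagonal contribution $\int_{\R^2} f(\bx,-\bx)\,d\bx$ is an integral over all of $\R^2$, not a sum over the discrete orbit, so your appeal to a positive lower bound on $\|\bx\|$ for orbit vectors is misplaced; in fact this integral equals $|B_{\eta/2}|\asymp \eta^2$, but being $O(1)$ in $R$ it vanishes after dividing by $|B_R|$ and taking $R\to\infty$, so your conclusion is unaffected (the paper simply absorbs this term without comment).
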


\begin{proof}
Let $f=\one_S$ be the characteristic function supported on 
\begin{align}\label{set}
S := S(R,\eta) = \{ (\bx,\by)\in\R^2\times\R^2: \bx\in B_R,\ \by\in B_{\eta}^*(\bx)\},
\end{align}
and let
\[
\Theta(A) := \Theta_{\Lambda_\fa,\Lambda_\fb;f}(A) = \sum_{\bx\in\Lambda_\fa}\sum_{\by\in\Lambda_\fb} f(A\bx,A\by) 
\]
for each $A\in\GL_2(\R)$. Fix $\eps>0$. We will show that we can choose $\eta>0$ such that
\begin{align*}
\limsup_{R\to\infty} \frac{\Theta(I)}{|B_R|} < \eps.
\end{align*}

Fix $\alpha>0$ and consider the open symmetric neighborhood $\cU_\alpha =K\{ \bsm e^{r/2}&\\ & e^{-r/2}\esm: r\in(-\alpha,\alpha)\}K\subset G$ 
(in terms of the Cartan decomposition). If $f_\alpha$ denotes the characteristic function supported on $S(\sqrt{2}Re^{\alpha/2},\sqrt2 \eta e^{\alpha/2})$ and we set $\Theta_\alpha:=\Theta_{\Lambda_\fa,\Lambda_\fb;f_\alpha}$ then for each $A\in \cU_\alpha$, 
\begin{equation}\label{eq:alphafat}
\Theta(I)\leq \Theta_\alpha(A).
\end{equation}
Indeed, if $\bx\in B_R$, then $\|A\bx\|<\sqrt2 e^{\alpha/2}R$,
 and if $\by\in B_\eta^*(\bx)$, then 
$\|A\bx -A\by\| < \sqrt{2}\eta e^{\alpha/2}$.

Let $C_\alpha$ be the matrix cone over $\cU_\alpha\G/\G$. 
Then the above estimate together with the fact that $\Theta_\alpha\geq0$ yield
\begin{align}\label{rhs}
\Theta(I) \frac{\mu(\cU_\alpha\G/\G)}{3} = \int_{C_\alpha} \Theta(I)\det(A)^2\, dm(A) \leq \int_{C} \Theta_\alpha(A) \det(A)^2 dm(A).
\end{align}
Expressing the Haar measure via the Cartan decomposition we find
\[
\mu(\cU_\alpha\G/\G) = \tfrac{1}{2\pi V}\int_{-\alpha}^\alpha |\sinh(r)|dr =\tfrac{1}{\pi V}(\cosh(\alpha)-1).\]
By \eqref{eq:split} and \cref{ex:eps} the RHS of \eqref{rhs} is bounded above by
\begin{align*}
& c_\G^2 |B_{\sqrt2 R e^{\alpha/2}}||B_{\sqrt2 \eta e^{\alpha/2}}| + c_\G \iint_{\R^2\times\R^2} \op{\Phi_\fab(|\bx\wedge\by|)-c_\G}f_{\alpha}(\bx,\by)\, d\bx \, d\by\\
& = 4c_\G^2 e^{2\alpha} |B_{R}||B_{\eta}|  +O\op{(|B_{\sqrt2 Re^{\alpha/2}}||B_{\sqrt2 \eta e^{\alpha/2}}|)^{1-\delta/2}}.
\end{align*}
Hence
\[
\limsup_{R\to\infty}\frac{\Theta(I)}{|B_R|} \leq 3 c_\Gamma |B_\eta| \frac{e^{2\alpha}}{\cosh(\alpha)-1}.
\]
The function in $\alpha$ has a local minimum at $\alpha=\ln(3)$ with value $\tfrac{27}{2}$.
\end{proof}

%
%
%

The proof of \cref{main} extends to the set of holonomies of a Veech surface.

\begin{thm}
Let $M$ be a Veech surface. Then there exists a constant $C>0$, depending only on $M$, such that
\[
\limsup_{R\to\infty} \frac{|\{(\bx,\by)\in S_M\times S_M: \bx\in B_R,\, \by \in B^*_\eta(\bx)\}|}{|S_M\cap B_R|} <C|B_\eta|.
\]
\end{thm}

\begin{proof}
Recall that if $M$ is a Veech surface, its Veech group $\G_M$ is a nonuniform lattice in $G$ and the set $S_M$ is a disjoint finite union of discrete $\G_M$-orbits, i.e., $S_M=\cup \G_M \bx_i$. To each orbit corresponds a scaling factor $\lambda_i\neq0$ given by $\G_M\bx_i = \lambda_i \Lambda_i$ --- here $\Lambda_i$ is the associated scaled discrete orbit --- and we have
\[
|S_M\cap B_\eta^*(\bx)| = \sum |\Lambda_\fa \cap B^*_{\eta/\lambda_i}(\bx/\lambda_i)|.
\]
Counting the number of pairs that are $\eta$-friends we find
\[
\sum_{\bx\in S_M\cap B_R}\sum_{\by\in S_M\cap B_\eta^*(\bx)} 1 = 
\sum_{i,j}  \sum_{\bx\in \Lambda_j\cap B_{R/\lambda_j}} \left|\Lambda_i\cap B^*_{\eta/\lambda_i}\left(\tfrac{\lambda_j}{\lambda_i} \bx\right)\right|
\]
Let $f^{ij}$ be the characteristic function supported on the set
\[
\left\{(\bx,\by)\in \R^2\times\R^2: \bx\in B_{R/\lambda_j},\, \by\in B^*_{\eta/\lambda_i}\left(\tfrac{\lambda_j}{\lambda_i}\bx\right)\right\}.
\]
Running through the proof of \cref{main} we find that 
\[
\limsup_{R\to\infty}\, \sum_{i,j} \frac{\Theta_{\Lambda_j,\Lambda_i; f^{ij}_\alpha}(I)}{|S_M\cap B_R|} \ll_\G \sum_{i,j} \frac{c_\G^2}{\lambda_i^2 \lambda_j^2} \frac{|B_\eta|}{c_M} = c_M |B_\eta|.
\]
\end{proof}

\cref{thm:epsM} now follows immediately. We note that this result is trivially true for arithmetic lattice surfaces (e.g., square-tiled surfaces.) 

\begin{prop} \label{prop:arithuni}
If $M$ is an arithmetic lattice surface, then $S_M$ is uniformly discrete.
\end{prop}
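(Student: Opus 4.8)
The plan is to use the structural characterization of arithmetic lattice surfaces as one-point branched covers of flat tori. By Gutkin--Judge \cite{GJ}, an arithmetic lattice surface $M$ admits a translation covering $\pi\colon M\to E$, branched over a single point, of a flat torus $E=\C/\Lambda_0$ for some lattice $\Lambda_0\subset\R^2$; after a translation we may assume the branch point is $\bo\in E$. (If $M$ has no cone point it is itself such a torus and $S_M\subseteq\Lambda_0\setminus\{\bo\}$ is immediate, so I will assume $M$ carries at least one cone point.)

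The first step is to observe that each cone point of $M$ is a ramification point of $\pi$ and hence is sent to $\bo$. Then, given any saddle connection $\gamma$ on $M$, its endpoints are cone points, so $\pi\circ\gamma$ is a straight geodesic segment of $E$ from $\bo$ to $\bo$; since $\pi$ is a local isometry intertwining the two translation structures, $\gamma$ and $\pi\circ\gamma$ have the same holonomy vector, which upon lifting to $\C$ is an element of $\Lambda_0$, nonzero because $\gamma$ has positive length. This gives the inclusion $S_M\subseteq\Lambda_0\setminus\{\bo\}$.

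To conclude, I would fix any $\eta$ with $0<\eta<\tfrac12\min\{\|\bv\|:\bv\in\Lambda_0\setminus\{\bo\}\}$: two distinct points of $\Lambda_0$ differ by a nonzero lattice vector, hence lie at distance greater than $2\eta$, so no closed ball of radius $\eta$ can contain two of them. A fortiori $|S_M\cap B_\eta(\bx)|\leq 1$ for every $\bx\in\R^2$, which is uniform discreteness. I do not expect a genuine obstacle here; the only nontrivial ingredient is the Gutkin--Judge description of arithmetic surfaces, and the one point worth checking carefully is that $\pi$, being a translation map, transports the holonomy of a saddle connection verbatim into $\Lambda_0$.
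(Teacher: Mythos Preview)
Your argument is correct and essentially identical to the paper's: both invoke the Gutkin--Judge structure theorem for arithmetic Veech surfaces (one-point branched translation covers of a flat torus $\C/\Lambda_0$) and then read off uniform discreteness from the shortest vector of the lattice. The only cosmetic difference is that you prove the slightly stronger inclusion $S_M\subseteq\Lambda_0\setminus\{\bo\}$, whereas the paper records the consequence that any two holonomy vectors differ by an element of the lattice; either formulation immediately yields the bound $|S_M\cap B_{\ell/2}(\bx)|\le1$ with $\ell$ the length of the shortest nonzero lattice vector.
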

\begin{proof}
Let $\Gamma_M$ be its arithmetic Veech lattice. As such, it is commensurable with $\SL_2(\Z)$, and there exists a lattice $L\subset \R^2$ such that any two points in $S_M$ differ by an element of $L$, see Lemma 5.4 and Theorem 5.5 in \cite{GJ}.  Let $\ell$ be the length of a shortest lattice vector in $L$. Then for any $\bx\in \R^2$, $|S_M\cap B_{\ell/2}(\bx)|\leq 1.$
\end{proof}

\subsection{Determinants} \label{subsec:det}

\begin{thm}\label{mainD}
 Fix $D>0$.  Let $\Lambda_\fa$ and $\Lambda_\fb$ be scaled discrete lattice orbits. There is a constant $C= C_\Gamma$ so that
\[
\limsup_{R\to\infty} \frac{|\{(\bx,\by)\in\Lambda_\fa\times\Lambda_\fb : \bx\in B_R,\, \by\in \mathcal{D}_{D,1}(\bx)|}{|B_R|} \leq   D C_\Gamma + 2 \delta_{\fab}c_\Gamma.
\]
\end{thm}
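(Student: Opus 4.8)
The plan is to split the count according to whether the pair $(\bx,\by)$ is \emph{parallel} ($\bx\wedge\by=0$) or \emph{transverse} ($\bx\wedge\by\neq 0$): I would evaluate the parallel contribution exactly --- it accounts for precisely the term $2\delta_\fab c_\Gamma$ --- and bound the transverse contribution by $DC_\Gamma$ using the mean value formula \cref{2ndMoment} together with the fattening device from the proof of \cref{main}.

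For the parallel pairs: if $\delta_\fab=0$ then $\fa$ and $\fb$ are inequivalent cusps, so by \cref{determinants and cosets} $0\notin\cN_\fab$ and there are no parallel pairs at all. If $\delta_\fab=1$ then $\Lambda_\fa=\Lambda_\fb=:\Lambda$, and the argument in the proof of \cref{2ndMoment} (via \cref{Miyake}) shows that two parallel vectors of $\Lambda$ differ by a sign; since $\by=\pm\bx$ satisfies $\|\by\|\leq\|\bx\|$ and $|\bx\wedge\by|=0\leq D$, each such pair is counted. Hence the number of parallel pairs with $\bx\in B_R$ is exactly $2\delta_\fab\,|\Lambda\cap B_R|$, which by \cref{BNRW} equals $2\delta_\fab c_\Gamma|B_R|+o(|B_R|)$ and contributes exactly $2\delta_\fab c_\Gamma$ to the $\limsup$.

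For the transverse pairs, fix $\alpha>0$ and, for $d>0$, let $f_d\in SC(\R^2\times\R^2)$ be the indicator of the \emph{closed} set $\{(\bx,\by):\|\bx\|\leq R,\ \|\by\|\leq\|\bx\|,\ d\leq|\bx\wedge\by|\leq D\}$; then $\sup_{d>0}\Theta_{\Lambda_\fa,\Lambda_\fb;f_d}(I)$ bounds the number of transverse pairs being counted. With $\cU_\alpha=K\{a_y:e^{-\alpha}<y<e^\alpha\}K$ as in \cref{main}, for $g\in\cU_\alpha$ we have $\|g\|,\|g^{-1}\|\leq e^\alpha$ and, crucially, $|g\bx\wedge g\by|=|\bx\wedge\by|$ since $\det g=1$; hence $\Theta_{\Lambda_\fa,\Lambda_\fb;f_d}(I)\leq\Theta_{\Lambda_\fa,\Lambda_\fb;f_d^\alpha}(g)$ for every $g\in\cU_\alpha$, where $f_d^\alpha$ is the indicator of the closed set $\{(\bx,\by):\|\bx\|\leq e^\alpha R,\ \|\by\|\leq e^{2\alpha}\|\bx\|,\ d\leq|\bx\wedge\by|\leq D\}$. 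Since $f_d^\alpha$ still enforces $|\bx\wedge\by|\geq d>0$ it vanishes on the parallel locus, so integrating this inequality over $\cU_\alpha\G/\G$ (of positive $\mu$-measure) and applying \cref{2ndMoment} to $f_d^\alpha$ makes the $\delta_\fab$-term drop out, leaving
\begin{align*}
\mu(\cU_\alpha\G/\G)\,\Theta_{\Lambda_\fa,\Lambda_\fb;f_d}(I)\ \leq\ c_\Gamma\sum_{c\in\cN_\fab^*}\varphi_\fab(c)\int_G f_d^\alpha(g\be_1,cg\be_2)\,d\eta(g).
\end{align*}
As $|g\be_1\wedge cg\be_2|=|c|$, only the finitely many $c$ with $d\leq|c|\leq D$ contribute, and the coordinate identity $\int_G f(g\be_1,cg\be_2)\,d\eta(g)=\tfrac1{|c|}\int_\R\int_{\R^2}f(\bx,t\bx+c\bx^*)\,d\bx\,dt$ preceding \cref{2ndMoment}, combined with $\|t\bx+c\bx^*\|^2=t^2\|\bx\|^2+c^2\|\bx\|^{-2}$ (choosing $\bx^*\perp\bx$), gives $\int_G f_d^\alpha(g\be_1,cg\be_2)\,d\eta(g)\leq 2\pi e^{4\alpha}R^2/|c|$. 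Thus the right-hand side is $\leq 2\pi c_\Gamma e^{4\alpha}R^2\sum_{0<|c|\leq D}\varphi_\fab(c)/|c|$, and partial summation on \cref{Good} --- equivalently the bound $\sum_{0<c\leq D}\varphi_\fab(c)/c\leq 2\int_0^D\Phi_\fab$ and the boundedness of $\Phi_\fab$ from \cref{asymptot} --- yields $\sum_{0<|c|\leq D}\varphi_\fab(c)/|c|=O_\G(D)$. Letting $d\to0$ and dividing by $|B_R|=\pi R^2$ produces a transverse bound of the form $C_\Gamma D$, with $C_\Gamma$ depending only on $\G$ (and the fixed $\alpha$); adding the two contributions gives the statement.

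The step requiring care is the second one. One cannot simply fatten $\mathcal{D}_{D,1}$ over the full matrix cone as in the $\varepsilon$-friends argument, because dilating by $\nu^{1/2}<1$ shrinks $|\bx\wedge\by|$ and would force the fattened set to meet the parallel locus, reintroducing a $\delta_\fab$-term with the wrong constant; working instead over $\cU_\alpha\subset G/\G$ exploits that $|\bx\wedge\by|$ is $\SL_2(\R)$-invariant, so the fattened function stays supported on transverse pairs and the exact parallel count $2\delta_\fab c_\Gamma$ is cleanly isolated. The remaining (routine) points are the boundedness of $\Phi_\fab$ and the elementary geometry of the sets $\mathcal{D}_{D,1}(\bx)$.
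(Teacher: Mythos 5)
Your proposal is correct, and it follows the same overall strategy as the paper's proof, which simply says ``follow the proof of \cref{main}'' with $f_\alpha$ supported on $\{\bx\in B_{Re^\alpha},\ \by\in\mathcal{D}_{D,e^{2\alpha}}(\bx)\}$ and then invokes \cref{ex:det}. The two genuine points of divergence are worth recording. First, the paper fattens the \emph{entire} indicator (parallel pairs included) and reads off the linearly dependent contribution from the $\delta_\fab$-term of \eqref{eq:split}; done literally, that term gets divided by $\mu(\cU_\alpha\G/\G)/3\le 1/3$ and multiplied by $e^{2\alpha}$, so the fattening device alone yields a constant strictly larger than $3\delta_\fab c_\Gamma$ in front of the parallel contribution. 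Your decision to peel off the parallel pairs and count them exactly --- using \cref{determinants and cosets}, the $t=\pm1$ argument from the proof of \cref{2ndMoment}, and \cref{BNRW} --- is what actually produces the stated constant $2\delta_\fab c_\Gamma$; the paper's closing sentence (``for small enough determinant, we are counting parallel pairs which is determined by the Siegel--Veech constant'') gestures at exactly this but does not carry it out. Your accompanying observation that the transverse indicator cannot be fattened over the full cone (since $\det(A)<1$ shrinks $|\bx\wedge\by|$ below any fixed threshold $d$) correctly explains why you must work over $\cU_\alpha\subset G$ with the $G/\G$ formula \cref{2ndMoment} and a limit $d\to0$, rather than over $C$ with \cref{thm-useful}. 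Second, for the transverse bound you replace the paper's appeal to \cref{ex:det} by the direct estimate $\int_G f_d^\alpha(g\be_1,cg\be_2)\,d\eta(g)\le 2\pi e^{4\alpha}R^2/|c|$ together with $\sum_{0<|c|\le D}\varphi_\fab(c)/|c|=O_\G(D)$ via $\sum_{0<c\le D}\varphi_\fab(c)/c\le 2\int_0^D\Phi_\fab$ and the boundedness of $\Phi_\fab$ from \cref{asymptot}; this is essentially the same computation that underlies \cref{ex:det}, packaged more elementarily. Both routes are valid; yours is more self-contained and, as noted, is needed as written to obtain the precise constant in the statement.
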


\begin{proof}
We follow the proof of \cref{main}, where $f_\alpha$ is the characteristic function supported on 
\begin{align*}
\{ (\bx,\by)\in\R^2\times\R^2: \bx\in B_{Re^\alpha},\ \by\in \mathcal{D}_{D,e^{2\alpha}}(\bx)\}.
\end{align*}
The conclusion comes from looking at \cref{ex:det}, and noticing that the linearly dependent terms do indeed contribute to the upper bound since for small enough determinant, we are counting parallel pairs which is determined by the Siegel--Veech constant.
\end{proof}

The extension to holonomy vectors for Veech surfaces given by \cref{thm:detM} is as straightforward as in the case of $\eps$-friends.

%

\appendix
\section{An application to flows on translation surfaces} \label{AppJon}

 The first subsection introduces the notion of disjointness, and gives a criterion via spectral arguments for a family of flows to be disjoint. The second subsection verifies the criterion for non-arithmetic Veech surfaces. The main result of this section is Theorem~\ref{thm:disjoint trans} below. Theorem~\ref{thm:disjoint trans} implies Theorem~\ref{thm:disjoint} by \cref{thm:epsM}; see the proof of \cref{cor:redux}. The proof uses \cite{AD} and a largely geometric argument based on \cref{thm:epsM} and \cite{Vor}.

\subsection{A criterium for disjointness}

\begin{defn}Let $(X,\mathcal{B},\mu,F_1^t)$ and $(Y,\mathcal{A},\nu,F_2^t)$ be two ergodic probability measure preserving flows. A {\bf\emph{joining}} is an $(F_1 \times F_2)^t$-preserved measure with marginals $\mu$ and $\nu$. The product measure $\mu \times \nu$ is called the {\bf\emph{trivial joining}}. If the trivial joining is the only joining between two systems we say they are {\bf\emph{disjoint}}. 
\end{defn}
If $(X,\mathcal{B},\mu,F_1^t)$ and $(Y,\mathcal{A},\nu,F_2^t)$ are isomorphic with $\phi$ being the isomorphism then $(Id \times \phi)_*\mu$ is a joining. Thus if two systems are disjoint then they are not isomorphic (unless they are the one point system).

Let $(X,d)$ be a compact metric space and $(X,\mu)$ be a Borel probability measure space. 

\begin{defn}
Let $F^t$ be a measurable flow on $(X,d)$ that preserves a probability measure $\mu$. We say that $\{t_j\}_{j\in \N} \subset \R$ is a {\bf\emph{$c$-partial rigidity sequence}} if for all $j$ there exists $A_j \subset X$ with $\mu(A_j)\geq c$ so that 
\[\underset{j \to \infty}{\lim}\int_{A_j}d(F^{t_j}x,x)d\mu=0.\] We say $\{t_j\}$ is a {\bf\emph{partial rigidity sequence}} if it is a $c$-partial rigidity sequence for some $c>0$. 
\end{defn}
\begin{defn}Let $F^t$ be a measurable flow on $(X,d)$ that preserves a probability measure $\mu$. We say that $\{t_j\}_{j\in \N} \subset \R$ is a {\bf\emph{mixing sequence}} for $F^t$ if for all $f,g\in L^2(\mu)$ we have 
\[\underset{j \to \infty}{\lim} \left\langle f \circ F^{t_j},g\right\rangle=\int f d\mu \int g d\mu.\]  If $L \subset \R$ has the property that any $\{t_j\}_{j\in \N} \subset L$ with $t_j\to \infty$ is a mixing sequence, then we call it a {\bf\emph{mixing set}}. 
\end{defn}
The following lemma is easier than results in the literature and we include a proof for the convenience of the reader. 
\begin{lm}\label{lem:disjoint crit}Let $F_1^t$ and $F_2^t$ be two $\mu$-measure preserving and ergodic flows. If there is a mixing sequence for $F_1^t$ which is a partial rigidity sequence for $F_2^t$ then $F_1^t$ and $F_2^t$ are disjoint. 
\end{lm}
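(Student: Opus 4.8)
The plan is to show that the only joining of the systems $(X,\mathcal B,\mu,F_1^t)$ and $(Y,\mathcal A,\nu,F_2^t)$ is the product $\mu\times\nu$. Fix a joining $\rho$ on $X\times Y$ and encode it by its Markov operator $\Phi\colon L^2(\nu)\to L^2(\mu)$, determined by $\langle\Phi g,h\rangle_{L^2(\mu)}=\int_{X\times Y}h(x)g(y)\,d\rho(x,y)$. Then $\Phi$ is a linear contraction with $\Phi\one=\one$ and $\int_X\Phi g\,d\mu=\int_Y g\,d\nu$, and, since $\rho$ is invariant under $F_1^t\times F_2^t$, it intertwines the Koopman operators: $\Phi\,U_2^t=U_1^t\,\Phi$ for all $t$, where $U_i^t\varphi=\varphi\circ F_i^t$. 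Since finite linear combinations of product functions separate measures on $X\times Y$, we have $\rho=\mu\times\nu$ if and only if $\Phi g$ is constant for every $g$, i.e. $\Phi=E$ where $Eg:=(\int_Y g\,d\nu)\one$; so it suffices to prove $\Phi=E$.

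The first ingredient is the mixing sequence. To say $\{t_j\}$ is a mixing sequence for $F_1^t$ is exactly to say $U_1^{t_j}\to P$ in the weak operator topology, $P$ the orthogonal projection of $L^2(\mu)$ onto constants; the same holds for $\{-t_j\}$. Passing to a subsequence (weak-operator compactness of the unit ball of $B(L^2(\nu))$), I may also assume $U_2^{t_j}\to Q$ and $U_2^{-t_j}\to Q^*$ weakly, where $Q$ is a doubly stochastic operator on $L^2(\nu)$ commuting with every $U_2^t$. Letting $j\to\infty$ in $U_1^{t_j}\Phi=\Phi U_2^{t_j}$ — the left side tends to $P\Phi$, the right side to $\Phi Q$ since $\Phi$ is bounded (hence weak--weak continuous) and $U_2^{t_j}g\rightharpoonup Qg$ for each $g$ — gives $\Phi Q=P\Phi=E$, and likewise $\Phi Q^*=E$. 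Hence $\Phi Q_s=E$ for the self-adjoint contraction $Q_s:=\tfrac12(Q+Q^*)$, and since $E Q_s=E$, iteration gives $\Phi Q_s^{\,n}=E$ for every integer $n\ge1$.

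Next comes spectral bookkeeping. Writing $Q_s=\int_{[-1,1]}\lambda\,dP_{Q_s}(\lambda)$, for fixed $g\in L^2(\nu)$, $h\in L^2(\mu)$ the finite signed measure $m_{g,h}(B):=\langle\Phi P_{Q_s}(B)g,h\rangle$ on $[-1,1]$ satisfies $\int\lambda^n\,dm_{g,h}=(\int g\,d\nu)(\int h\,d\mu)$ for all $n\ge1$, by the relations $\Phi Q_s^n=E$. Subtracting $(\int g\,d\nu)(\int h\,d\mu)\,\delta_1$ and approximating continuous functions vanishing at $0$ by polynomials vanishing at $0$ (Weierstrass), one finds that the difference is carried by $\{0\}$; comparing total masses then gives $\langle\Phi g,h\rangle=(\int g\,d\nu)(\int h\,d\mu)+\langle\Phi\Pi_0 g,h\rangle$, where $\Pi_0=P_{Q_s}(\{0\})$ is the orthogonal projection onto $\ker Q_s$. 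Thus $\Phi=E+\Phi\Pi_0$, and it remains only to prove that $\Phi$ annihilates $\ker Q_s$.

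This last step is where partial rigidity enters, and I expect it to be the main obstacle. Since $\{t_j\}$ is a $c$-partial rigidity sequence, uniform continuity of $g\in C(Y)$ on the compact space $Y$ together with Chebyshev's inequality give $\|\one_{A_j}(g\circ F_2^{t_j}-g)\|_{L^2(\nu)}\to0$ with $\nu(A_j)\ge c$; substituting into $\langle Q_s g,g\rangle=\lim_j\langle g\circ F_2^{t_j},g\rangle=\|g\|^2-\tfrac12\lim_j\|g\circ F_2^{t_j}-g\|^2$ yields $\langle Q_s g,g\rangle\ge\|g\|^2-2\|g\|_\infty^2(1-c)$, so $Q_s$ is bounded below on the (dense, $U_2^t$-invariant) cone of continuous functions that are flat relative to their $L^2$-norm. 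The delicate point is to promote this to $\ker Q_s=\{0\}$: the obstruction is the non-rigid part $A_j^c$, whose measure is only bounded by $1-c$ rather than tending to $0$, so a crude estimate on it does not close the gap. The natural way around this is to work inside the $U_2^t$-invariant subspace $\ker Q_s$, on which $U_2^{t_j}$ converges weakly to the skew-adjoint operator $\tfrac12(Q-Q^*)$, so that any $0\ne g\in\ker Q_s$ would have $\langle g\circ F_2^{t_j},g\rangle\to0$ while being $c$-partially rigid along $\{t_j\}$; ruling this out — using ergodicity of $F_2^t$ and the above quadratic-form bound, possibly after a further reduction — finishes the proof, and is the one spot that requires genuine care rather than routine manipulation.
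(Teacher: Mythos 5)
Your setup and the reduction $\Phi=E+\Phi\Pi_0$ are fine, but the remaining step is a genuine gap, not a technicality, and the route you sketch cannot close it. The quadratic-form estimate $\langle Q_sg,g\rangle\ge \|g\|_2^2-2(1-c)\|g\|_\infty^2$ only excludes from $\ker Q_s$ those $g$ with $\|g\|_2^2>2(1-c)\|g\|_\infty^2$; when $c\le\tfrac12$ the kernel can perfectly well be nontrivial and consist entirely of peaked functions, and for such $g$ there is no contradiction between $\langle g\circ F_2^{t_j},g\rangle\to0$ and $c$-partial rigidity --- rigidity on sets $A_j$ of measure merely $\ge c$ says nothing about a function living where the $A_j$ are not. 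In symmetrizing and passing to the spectral measure of $Q_s$ you have discarded exactly the information that makes the lemma true, namely the \emph{order} structure of the limit operator: partial rigidity forces a WOT limit $Q$ of the $U_2^{t_j}$ (along a subsequence) to dominate $c$ times the identity in the positivity-preserving sense, because the weak-$*$ limit of the couplings $(\mathrm{Id}\times F_2^{t_j})_*\nu$ on $Y\times Y$ carries mass at least $c$ on the diagonal; that is, $Q=c\,\mathrm{Id}+Q'$ with $Q'$ a positive (order-preserving) operator. This is strictly stronger than any bound on the quadratic form.

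This positivity is what the paper uses, together with a convexity argument you never invoke. From $\Phi Q=E$ one gets $E=c\,\Phi+\Phi Q'$, which exhibits $E$ as the convex combination $c\,\Phi+(1-c)\bigl((1-c)^{-1}\Phi Q'\bigr)$ of Markov operators --- positivity of $Q'$ is needed precisely so that $(1-c)^{-1}\Phi Q'$ is again a Markov operator. Since $F_1^t$ admits a mixing sequence it is weakly mixing, hence $(F_1\times F_2)^t$ is ergodic for $\mu\times\nu$, hence $E$ is an extreme point of the convex set of Markov operators; extremality then forces $\Phi=E$. If you wanted to keep your spectral bookkeeping you would still have to import both the positivity of $Q'$ and the extremality of $E$ in order to kill $\Phi\Pi_0$, at which point the moment computation buys you nothing. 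So the proposal is not a complete proof: the missing ingredient is the positivity-plus-extreme-point argument, and the step you flagged as ``requiring genuine care'' is in fact where the entire content of the lemma lives.
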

The proof uses the following notion: A continuous linear map $P:L^2((X,\mu)) \to L^2((Y,\nu))$ is called a {\bf\emph{Markov operator}} with adjoint operator $P^*$ if it satisfies
 \begin{enumerate}
\item $P\geq 0$ and $P^*\geq 0$;
\item $P\one_X=\one_Y$ and $P^*\one_Y=\one_X$;
\item $PU_{F_1^s}=U_{F_2^s}P$ for $s\in \R$ and where $U_{F_j^s}$ is the unitary operator associated to $F_j^s$. 
\end{enumerate}
Let $\tau$ be a  joining of $(F_1^t,\mu)$ and $(F_2^t,\nu)$.  This defines a  Markov operator $P_{\tau}:L^2(\nu) \to L^2(\mu)$  by  
$\int_BP_\tau(\one_A)d\mu=\tau(B \times A)$. Formally, this Markov operator $P_\tau$ is the conditional expectation associated to the disintegration of $\tau$ over $\nu$. The set of Markov operators are in a one-to-one correspondence with joinings and given a joining $\tau$ we will refer to the corresponding Markov operator $P_\tau$ as the {\bf\emph{associated Markov operator}}. This identification respects the convex structure of preserved measures and so  the extreme points come from ergodic joinings. For more on Markov operators, see \cite{Glas}.

\begin{proof}[Proof of \cref{lem:disjoint crit}] Let $\tau$ be a joining of $F_1^t$ and $F_2^t$ and $P$ be the associated Markov operator.  Choose $c>0$ and $\{t_j\}_{j\in \N}$ so that  $\{t_j\}$ is simultaneously a mixing sequence for $F_1^t$ and a $c$-partial rigidity sequence for $F_2^t$. 

Since $\{t_j\}$ is a mixing sequence for $F_1^t$, we have $PF_1^{t_j}=F_2^{t_j}P$ converge in the weak operator topology to the integral because the same is true for the unitary operators associated with $F_1^{t_j}$. Recall the weak operator topology is compact on operators with uniformly bounded operator norm. Thus, there is a subsequence of $t_{j_k}$ so that the sequence of unitary operators associated with $F_2^{t_{j_k}}$ converges in the weak operator topology to an operator $V$. Because $t_j$ is a $c$-partial rigidity sequence for $F_2^t$, $V$ necessarily has the form $c\mathrm{Id}+V'$ where $V'$ is a positive operator. Set $\int$ to be the bounded linear operator on $L^2$ given by integrating, so $(c\mathrm{Id}+V')P=\int$. Since $F_1^t$  has a mixing sequence, implying $F_1^t$ is weakly mixing, we have $(F_1 \times F_2)^t$ is ergodic. Thus $\int$ (the Markov operator associated to the product measure) is an extreme point in the set of Markov operators. The equality $(c\mathrm{Id}+V')P=\int$ combined with $\int$ being an extreme point implies $c\mathrm{Id}P=c\int$ (and $V'P=(1-c)\int$). Thus $P$ is in fact $\int$, implying the trivial joining is the only joining.
\end{proof}

The following disjointness criterion is motivated by the approach of \cite{ChaThesis} and is more or less easier than the criterion in \cite{CH}. Let $\{F_\theta^t\}_{\theta\in S^1}$ be the 1-parameter family of $\mu$-measure preserving flows on $X$ coming from the straight line flow in each direction on $X$. Let $\lambda$ be the Lebesgue measure on $S^1$. 
\begin{prop}\label{prop:criterion} If 
\begin{itemize}
\item $F_\theta^t$ is weakly mixing for $\lambda$-almost every $\theta$, and
\item for every $L \subset \R$, which is a union of intervals of length at least 1 and has density 1, 
we have that $F_\phi^t$ has a partial rigidity sequence in $L$ for
$\lambda$-almost every $\phi$,
\end{itemize} then for $\lambda^2$-almost every $(\theta,\phi)\in (S^1)^2$ we have that $F_\theta^t$ and  $F_\phi^t$ are disjoint.
\end{prop}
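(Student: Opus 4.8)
\noindent\emph{Plan of proof.} The plan is to extract from the weak mixing of $F_\theta^t$ a density‑one ``mixing set'' that happens to be a union of intervals of length at least $1$, to feed that set into the second hypothesis, and to conclude via \cref{lem:disjoint crit}. Throughout I would fix a countable set $\{h_k\}$ dense in $L^2(\mu)$ and let $(f_k,g_k)_{k\ge 1}$ enumerate all pairs $(h_i,h_j)$; write $c_k=\int f_k\,d\mu\int g_k\,d\mu$, so that $\{t_j\}$ is a mixing sequence for a flow $F^t$ as soon as $\langle f_k\circ F^{t_j},g_k\rangle\to c_k$ for every $k$ (the passage from the dense family to all of $L^2(\mu)$ being the usual $\eps/3$ argument, using that the Koopman operators have norm $1$).

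The step I expect to be the main obstacle is the first one: given $\theta$ for which $F_\theta^t$ is weakly mixing (a full‑measure set of $\theta$, by the first hypothesis), one must build $L_\theta\subset\R$ that is a union of intervals of length $\ge 1$, has density one, and is such that every sequence tending to $\infty$ inside $L_\theta$ is a mixing sequence for $F_\theta^t$ --- a bare density‑one set of ``good times'' need not contain any such union of intervals. To circumvent this I would use that each correlation function $s\mapsto\langle f_k\circ F_\theta^s,g_k\rangle$ is uniformly continuous (strong continuity of the Koopman group), so that the weak‑mixing averages $\frac1T\int_0^T|\langle f_k\circ F_\theta^s,g_k\rangle-c_k|\,ds\to 0$ force the \emph{windowed} sets
\[
A_{k,\eta}:=\Bigl\{t\in\R:\ \sup_{s\in[t,t+1]}\bigl|\langle f_k\circ F_\theta^s,g_k\rangle-c_k\bigr|<\eta\Bigr\}
\]
to still have density one for every $\eta>0$ (a single large deviation on $[t,t+1]$ contributes a definite amount to $\int_{t-1}^{t+2}|\langle f_k\circ F_\theta^s,g_k\rangle-c_k|\,ds$, and one finishes by Chebyshev). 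Since $N\in A_{k,\eta}$ precisely when $[N,N+1]$ lies inside the sublevel set $\{s:|\langle f_k\circ F_\theta^s,g_k\rangle-c_k|<\eta\}$, a diagonal construction such as
\[
L_\theta:=\bigcup\Bigl\{\,[N,N+1]:\ N\in\N,\ N\in A_{k,1/m}\ \text{for all }k,m\le g(N)\,\Bigr\},
\]
with $g(N)\to\infty$ chosen to grow slowly enough that $L_\theta$ still has density one, yields a union of unit intervals with the required mixing property: if $t_j\to\infty$ with $t_j\in L_\theta$, then for each fixed $k$ one gets $\limsup_j|\langle f_k\circ F_\theta^{t_j},g_k\rangle-c_k|\le 1/m$ for all $m$, hence $\langle f_k\circ F_\theta^{t_j},g_k\rangle\to c_k$, so $\{t_j\}$ is a mixing sequence.

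With $L_\theta$ in hand, I would invoke the second hypothesis with $L=L_\theta$: for $\lambda$‑almost every $\phi$ the flow $F_\phi^t$ has a partial rigidity sequence $\{t_j\}\subset L_\theta$, and after passing to a subsequence we may assume $t_j\to\infty$. Then $\{t_j\}$ is simultaneously a partial rigidity sequence for $F_\phi^t$ and a mixing sequence for $F_\theta^t$; since $F_\theta^t$ is ergodic (being weakly mixing) and $F_\phi^t$ is ergodic for $\lambda$‑almost every $\phi$ \cite{KMS}, \cref{lem:disjoint crit} shows that $F_\theta^t$ and $F_\phi^t$ are disjoint.

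Finally I would assemble these with Fubini's theorem: we will have shown that for $\lambda$‑almost every $\theta$, and then for $\lambda$‑almost every $\phi$ (with the null set allowed to depend on $\theta$), the pair $(\theta,\phi)$ lies in the set $\cD\subset(S^1)^2$ of pairs for which $F_\theta^t$ and $F_\phi^t$ are disjoint; since the complement of $\cD$ --- pairs admitting a nontrivial joining --- is measurable, Fubini gives $\lambda^2(\cD^c)=0$, which is the claim. The only places the hypotheses are used are: weak mixing of $F_\theta^t$, to build $L_\theta$ in the first step; and the partial rigidity hypothesis, applied with $L=L_\theta$ in the second --- the requirement there that $L$ be a union of intervals of length at least $1$ being exactly what the windowing‑and‑diagonalization is engineered to meet.
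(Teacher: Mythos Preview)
Your proposal is correct and follows essentially the same route as the paper: fix a weakly mixing $\theta$, produce a density-one mixing set $L_\theta$ that is a union of unit intervals, feed it into the second hypothesis, and conclude via \cref{lem:disjoint crit} and Fubini. The only difference is cosmetic: the paper packages the existence of $L_\theta$ as \cref{lem:mix seq} and proves it in one line by taking the $1$-neighborhood of any density-one mixing set (using strong continuity of the Koopman group), whereas you build $L_\theta$ by an explicit windowing-and-diagonalization; both arguments rest on the same uniform continuity of the correlation functions.
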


\begin{lm}\label{lem:mix seq}Let $F^t$ be a $\mu$-weakly mixing flow, and let $r<\infty$. We can find $L \subset \R$  which is a union of intervals of length at least $r$, has density 1, and any sequence in $L$ going to infinity is a mixing sequence for $F^t$. 
\end{lm}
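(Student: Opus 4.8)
\emph{Proof sketch (plan).} The plan is to reduce the statement to controlling the correlations $\langle f_i\circ F^s, f_j\rangle$ with $f_i,f_j$ ranging over a fixed countable dense subset $\{f_i\}_{i\ge 1}$ of $L^2(\mu)$, and then to promote the ``density-one set of good times'' that weak mixing supplies into a union of intervals of length at least $r$. Two standing facts will be used repeatedly: since a measure-preserving measurable flow induces a strongly continuous one-parameter unitary group on $L^2(\mu)$, the function $s\mapsto \langle f_i\circ F^s, f_j\rangle$ is \emph{uniformly} continuous on $\R$; and weak mixing, in the form $\frac1T\int_0^T|\langle f\circ F^s,g\rangle-\int f\,d\mu\int g\,d\mu|^2\,ds\to 0$, gives via Chebyshev that for every $\varepsilon>0$ the ``bad set'' $\{s:|\langle f\circ F^s,g\rangle-\int f\,d\mu\int g\,d\mu|>\varepsilon\}$ has density $0$ in $\R_{\ge 0}$.

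First I would carry out the thickening step. For $n\ge 1$ put $P_n=\{(i,j):1\le i,j\le n\}$ and let $\mathrm{Good}_n\subseteq\R$ be the open set of times $s$ at which $|\langle f_i\circ F^s,f_j\rangle-\int f_i\,d\mu\int f_j\,d\mu|<1/n$ for all $(i,j)\in P_n$; then set $G_n=\{t:[t,t+r]\subseteq \mathrm{Good}_n\}$ and $H_n=G_n+[0,r]$. A compactness (tube) argument shows $G_n$ is open, so $H_n$ is open and every connected component of $H_n$ has length $>r$; moreover every $t\in H_n$ lies in some $[s,s+r]$ with $s\in G_n$, hence all correlations indexed by $P_n$ are within $1/n$ of their limits at $t$, and $H_{n+1}\subseteq H_n$ because $\mathrm{Good}_{n+1}\subseteq \mathrm{Good}_n$. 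The point is that $H_n$ still has density $1$: the complement of $G_n$ is contained in the finite union over $(i,j)\in P_n$ of the thickened bad sets $\{t:\sup_{s\in[t,t+r]}|\langle f_i\circ F^s,f_j\rangle-\int f_i\,d\mu\int f_j\,d\mu|\ge 1/n\}$, and a short Fubini estimate --- using that the set where this modulus exceeds $1/(2n)$ has density $0$, and that by uniform continuity each of its points carries an interval of a fixed positive length on which the modulus still exceeds $1/(2n)$ --- shows each thickened bad set has density $0$.

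Next I would diagonalize and clean up. Choose $T_1<T_2<\cdots$ with $T_n\ge 2^n$ and $|H_n\cap[0,T]|\ge(1-1/n)T$ for all $T\ge T_n$, and set $L_0=\bigcup_n H_n\cap[T_n,T_{n+1})$. Nestedness of the $H_n$ gives that $L_0$ has density $1$ and that $L_0\setminus H_n$ is bounded for every $n$. Away from the cut points $T_n$ the components of $L_0$ are full components of some $H_m$, hence of length $>r$; only near the (logarithmically sparse) cut points can short components arise, and boundedly many of length $<r$ per cut point, so deleting them loses a set of density $0$. The result is $L\subseteq L_0$ which is a union of intervals of length $\ge r$, of density $1$, with $L\setminus H_n$ bounded for each $n$. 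Finally, $L$ is a mixing set: given $f,g\in L^2(\mu)$ and $\varepsilon>0$, choose $i,j$ with $\|f-f_i\|_2,\|g-f_j\|_2<\varepsilon$ and $N=\max(i,j,\lceil 1/\varepsilon\rceil)$; for $t\in L$ large enough to lie in $H_N$ the correlation $\langle f_i\circ F^t,f_j\rangle$ is within $1/N\le\varepsilon$ of $\int f_i\,d\mu\int f_j\,d\mu$, while measure-preservation and two applications of Cauchy--Schwarz bound $|\langle f\circ F^t,g\rangle-\langle f_i\circ F^t,f_j\rangle|$ and $|\int f\,d\mu\int g\,d\mu-\int f_i\,d\mu\int f_j\,d\mu|$ by $O(\varepsilon)$ uniformly in $t$; hence $\langle f\circ F^t,g\rangle\to\int f\,d\mu\int g\,d\mu$ as $t\to\infty$ along $L$, so every sequence in $L$ tending to infinity is a mixing sequence.

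The hard part is the thickening step: weak mixing only produces a density-one set of good times, and a density-one set need not contain long intervals, so one must genuinely exploit uniform continuity of the correlation functions to see that the bad times cannot be $r$-dense on a set of positive density --- this is precisely what lets the thickened sets $G_n$ and $H_n$ keep full density while consisting of intervals longer than $r$. The bookkeeping in the diagonalization (bounding the short components produced at the cut points) is routine but must be done with a little care.
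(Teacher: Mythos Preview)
Your proof is correct but takes a more elaborate route than the paper's two-line argument. The paper first quotes the standard equivalence of weak mixing: there is a density-one set $L'\subset\R$ which is already a mixing set. It then takes $L$ to be the $r$-neighborhood of $L'$, which is automatically a union of intervals of length at least $r$ and still has density one; the only thing to check is that $L$ remains a mixing set, and this follows because $\{U_{F^s}:s\in[0,r]\}$ is compact in the strong operator topology (writing $t_j=s_j+u_j$ with $s_j\in L'$ and $u_j$ bounded, pass to a subsequence with $u_j\to u$ and use $f\circ F^{u_j}\to f\circ F^u$ in $L^2$). You instead build $L$ from scratch by diagonalizing over a countable dense family in $L^2(\mu)$, using uniform continuity of each correlation function together with a Fubini estimate to show that thickening the bad sets by $[0,r]$ keeps them density zero. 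This is more self-contained---it does not import the density-one mixing set as a black box---but it essentially reproves that fact alongside the thickening step, whereas the paper's compactness observation handles both in one stroke.
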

The fact that there exists such an $L$ of density 1 is a well known equivalence of weak mixing. The fact that $L$ can be chosen as a union of intervals of length at least 1 is because the unitary operators for $F^t$ where $t\in [0,r]$ is a compact family in the strong operator topology. So, if $L'$ is a mixing set, its $r$-neighborhood is too.
\begin{proof}[Proof of Proposition \ref{prop:criterion}] It suffices to consider when $F_{\theta}^t$ is weakly mixing. By Lemma~\ref{lem:mix seq} choose a set $L$ satisfying the assumptions of Lemma~\ref{lem:mix seq} for $r=1$. By assumption there is a full measure set $D$ of directions that have a partial rigidity sequence in $L$. By Lemma \ref{lem:disjoint crit}, for all $\phi \in D$, $F_\theta^t$ and $F_\phi^t$ are disjoint, as desired. \end{proof}

\subsection{Application to non-arithmetic Veech surfaces}

We now set about adapting the criterion of Proposition~\ref{prop:criterion} to certain translation surfaces. The arguments have more or less been carried out in \cite{ChaHomog}, \cite{MTW} and \cite{CFKU}. For the sake of novelty, we present a slightly different, weaker (in particular avoiding \emph{ubiquity}), argument that is sufficient for our purposes. 
\begin{thm} \label{thm:disjoint trans} Let $M$ be a translation surface and $\mu$ denote the measure coming from the area. Let $F_\theta^t$ denote the flow in direction $\theta$ on $M$. 
\begin{itemize}
\item Assume the flow in almost every direction is weakly mixing. 
\item Suppose for every $\epsilon>0$ there exists $\eta>0$ so that all but at most an $\epsilon$-proportion of holonomies of cylinders do not have a $\eta$-friend. 
\end{itemize}
Then the family $\{F_\theta^t\}_{\theta\in S^1}$ with the measure $\mu$ satisfies the assumption of Proposition \ref{prop:criterion}. 
\end{thm}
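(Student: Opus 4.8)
The plan is to verify the two hypotheses of Proposition~\ref{prop:criterion} for the family $\{F_\theta^t\}_{\theta\in S^1}$. The first --- weak mixing of $F_\theta^t$ for $\lambda$-almost every $\theta$ --- is exactly the first assumption of the theorem, so the entire content is the second hypothesis: given a union $L\subseteq\R$ of intervals of length $\geq1$ with density $1$, I must exhibit, for $\lambda$-a.e.\ $\phi$, a partial rigidity sequence for $F_\phi^t$ that is contained in $L$.

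The mechanism is the classical observation that a cylinder produces approximate rigidity along its core. If $C\subseteq M$ is a cylinder in direction $\psi$ with circumference $\ell$ and area $a=\mu(C)$, write $C\cong(\R/\ell\Z)\times[0,h]$ with $\ell h=a$, so that $F_\psi^s$ acts on $C$ by $(x,y)\mapsto(x+s,y)$. For $\phi$ near $\psi$ put $\beta=\psi-\phi$ and $t=\ell\sec\beta$. A point of $C$ whose $F_\phi$-trajectory remains in $C$ on $[0,t]$ is moved by $F_\phi^t$ to within distance $\asymp\ell|\beta|$ of itself, and the set $A(C,\phi)$ of such points satisfies $\mu(A(C,\phi))\geq a-\ell t|\sin\beta|\geq a-\ell^{2}|\beta|$. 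Consequently, if for a fixed $\phi$ I can find cylinders $C_n$ with directions $\psi_n\to\phi$, circumferences $\ell_n\to\infty$, areas $\mu(C_n)\geq\kappa>0$, wrapping times $t_n:=\ell_n\sec(\psi_n-\phi)$ lying in $L$, and $\ell_n^{2}|\psi_n-\phi|\leq\kappa/2$, then $\{t_n\}\subseteq L$ is a $(\kappa/2)$-partial rigidity sequence: $\mu(A(C_n,\phi))\geq\kappa/2$ while $\int_{A(C_n,\phi)}d(F_\phi^{t_n}x,x)\,d\mu\lesssim\ell_n|\psi_n-\phi|\leq(\kappa/2)/\ell_n\to0$.

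It remains to produce such a family of cylinders for $\lambda$-a.e.\ $\phi$. By \cite{Vor} (together with a Borel--Cantelli refinement), for $\lambda$-a.e.\ $\phi$ there are $\kappa=\kappa(M,\phi)>0$ and cylinders with area $\geq\kappa$, directions $\psi_n\to\phi$, $\ell_n\to\infty$, and $\ell_n^{2}|\psi_n-\phi|\to0$; this supplies the fat, strongly-approximating cylinders. The remaining requirement, $t_n\in L$, is where the second hypothesis enters. The cylinder holonomy set $\mathrm{Cyl}_M\subseteq\R^{2}$ has at most quadratic growth, and by the second hypothesis, for every $\eps>0$ one may discard an $\eps$-proportion and leave a set that is $\eta$-separated in $\R^{2}$; in particular the circumferences of (most) cylinders pointing near a given direction are spread out at a definite rate. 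A short computation shows that for a single fat cylinder of circumference $\ell$ the Lebesgue measure of directions $\phi$ with $|\psi-\phi|<\ell^{-2}$ and $\ell\sec(\psi-\phi)\in L$ is comparable to $\ell^{-2}$ times the density of $L$ in a window of size $\asymp\ell^{-3}$ just above $\ell$; summing this over the $\eta$-separated fat cylinders and using that $L$ has density $1$ (so $|L\cap[2^{k},2^{k+1}]|\geq2^{k}(1-o(1))$, whence $[\ell,\ell+\tfrac{1}{2}\ell^{-3}]\subseteq L$ for a positive proportion of admissible $\ell$) yields, via Borel--Cantelli, a full-measure set of $\phi$ for which infinitely many of the cylinders above \emph{also} have $t_n\in L$. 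Feeding this sequence into the previous paragraph and invoking Proposition~\ref{prop:criterion} proves Theorem~\ref{thm:disjoint trans}; the disjointness statement of Theorem~\ref{thm:disjoint} then follows from \cref{thm:epsM} as in \cref{cor:redux}.

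I expect the main obstacle to be the last step --- matching the wrapping times $t_n$ with $L$. A density-$1$ union of unit intervals need not be syndetic and can avoid any fixed countable set, in particular the arithmetic progression $\{k\ell\sec\beta\}_k$ of return times of any single cylinder (and, by the same token, no fixed direction $\phi$ can be ``tuned'' to land its return time in $L$). One is therefore forced to work with an entire, well-distributed family of cylinders and average over $\phi$, and it is precisely the $\eta$-separation of $\mathrm{Cyl}_M$ furnished by the second hypothesis --- guaranteeing that the relevant circumferences do not cluster --- that makes the density-$1$ property of $L$ usable. The bookkeeping for the Borel--Cantelli estimate (in particular arranging the divergence of the relevant sum of measures from the lower bound on the number of fat cylinders) is the part that will require care.
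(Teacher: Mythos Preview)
Your overall strategy---fat cylinders supply rigidity, Vorobets supplies an abundance of them near every direction, and the $\eta$-friend hypothesis forces most circumferences to land in $L$---matches the paper's. The gap is in how you pass from ``most cylinders have $\ell\in L$'' to ``a.e.\ $\phi$ is well-approximated by such cylinders.'' You invoke the second Borel--Cantelli lemma, but you neither establish divergence of the relevant sum (Vorobets gives a \emph{cover} of $S^1$, not a lower bound on the \emph{number} of fat cylinders at each scale, so $\sum\ell_n^{-2}$ over good cylinders is not obviously infinite) nor the quasi-independence that lemma requires. Your windowing argument is also off: for $|\psi-\phi|<\ell^{-2}$ the wrapping time $\ell\sec(\psi-\phi)$ ranges over an interval of length $\asymp\ell^{-3}$, far shorter than a unit component of $L$, so for large $\ell$ the condition $t_n\in L$ is simply $\ell\in L$, and the real question is whether a density-one set of circumferences lies in $L$.

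The paper avoids Borel--Cantelli. It first proves that density statement directly (Lemma~\ref{lem:basic geom}): the $\eta$-separated holonomies meet the region $\{T\in L^c\cap[0,R)\}$ in $o(R^2)$ points by a packing bound, and the remaining $\epsilon$-proportion with $\eta$-friends is small by hypothesis. Then Vorobets gives $\bigcup_{\tilde S_M(\sigma,N)}B(\theta,c^2/(TN))=S^1$; the balls indexed by the $o(N^2)$ bad cylinders with $T\notin L$ have total measure $o(1)$ by Abel summation, so the union over the \emph{good} cylinders still has measure $\ge\hat c\,\lambda(J)$ in every interval $J$. That is enough: a set meeting every interval in uniformly positive proportion has a complement with no Lebesgue density points, hence the limsup of these unions has full measure. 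The rigidity times are then the circumferences $T_j\in L$ themselves---no secant correction is needed, since $T_j^2|\theta_j-\phi|\to 0$ already forces $d(F_\phi^{T_j}x,x)\to 0$ on the cylinder (Lemma~\ref{lem:rigid}).
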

\begin{coro}\label{cor:redux} Let $M$ be a non-arithmetic Veech surface. Then for almost every $\theta, \psi \in S^1$ we have $F_\theta$ and $F_\psi$ are disjoint. In particular, the flows are not isomorphic. 
Moreover, for almost every $\theta, \psi \in S^1$ the product flows are uniquely ergodic.
\end{coro}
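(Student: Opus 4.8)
The plan is to verify the two hypotheses of \cref{thm:disjoint trans} for the non-arithmetic Veech surface $M$ and then read off the corollary. For the first hypothesis — weak mixing of $F_\theta^t$ for $\lambda$-almost every $\theta$ — I would simply invoke the theorem of Avila and Delecroix \cite{AD}, which establishes exactly this for non-arithmetic Veech surfaces. For the second hypothesis, I would first observe that the set $C_M\subset\R^2$ of holonomy vectors of core curves of cylinders on $M$ is a $\G_M$-invariant subset of the plane all of whose points lie in directions fixed by a parabolic element of $\G_M$; hence by the dichotomy theorem each $\G_M$-orbit in $C_M$ is discrete, and since there are only finitely many cylinders up to the affine action \cite{Veech89}, the set $C_M$ is a finite disjoint union of discrete $\G_M$-orbits, each homothetic to a scaled discrete orbit. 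The argument proving \cref{main}, run over these finitely many orbits exactly as in its extension to $S_M$, then gives for each $\eps>0$ an $\eta>0$ with
\[
\limsup_{R\to\infty}\frac{|\{(\bx,\by)\in C_M\times C_M:\bx\in B_R,\ \by\in B_\eta^*(\bx)\}|}{|C_M\cap B_R|}<\eps,
\]
which says that asymptotically all but at most an $\eps$-proportion of cylinder holonomies of length at most $R$ have no $\eta$-friend. This is precisely the second hypothesis of \cref{thm:disjoint trans}.

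With both hypotheses in hand, \cref{thm:disjoint trans} shows that the family $\{F_\theta^t\}_{\theta\in S^1}$ with the area measure $\mu$ satisfies the assumptions of \cref{prop:criterion}, and \cref{prop:criterion} then yields that $F_\theta^t$ and $F_\psi^t$ are disjoint for $\lambda^2$-almost every $(\theta,\psi)\in(S^1)^2$. Since by Kerckhoff, Masur and Smillie \cite{KMS} the flow $F_\theta^t$ is uniquely ergodic with the (non-atomic) area measure for $\lambda$-almost every $\theta$, a $\lambda^2$-generic pair consists of two non-trivial systems; as disjoint non-trivial systems are never isomorphic, this gives the non-isomorphism claim.

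For the final assertion I would combine disjointness with unique ergodicity directly. Let $U\subset S^1$ be the full-measure set of uniquely ergodic directions and $D\subset(S^1)^2$ the full-measure set of disjoint pairs, so that $D\cap(U\times U)$ has full $\lambda^2$-measure. For $(\theta,\psi)$ in this set, any $(F_\theta\times F_\psi)^t$-invariant Borel probability measure $\tau$ on $M\times M$ pushes forward to invariant measures on the two factors, which by unique ergodicity must each be $\mu$; hence $\tau$ is a joining of $(F_\theta^t,\mu)$ and $(F_\psi^t,\mu)$, and disjointness forces $\tau=\mu\times\mu$. Thus $F_\theta^t\times F_\psi^t$ is uniquely ergodic for $\lambda^2$-almost every $(\theta,\psi)$.

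The one step I expect to require genuine care is the verification of the second hypothesis of \cref{thm:disjoint trans}: one must keep in mind that the criterion involves holonomies of \emph{cylinders} rather than of saddle connections, and check that this set is again a finite union of discrete $\G_M$-orbits so that \cref{main} applies verbatim to it. Everything else — feeding the two hypotheses into \cref{thm:disjoint trans} and \cref{prop:criterion}, and the short measure-theoretic argument for unique ergodicity of the product flow — is routine.
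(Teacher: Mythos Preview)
Your proposal is correct and follows essentially the same route as the paper's proof: invoke \cite{AD} for weak mixing, verify the $\eta$-friend hypothesis for cylinder holonomies via the mean-value machinery, feed both into \cref{thm:disjoint trans} and \cref{prop:criterion}, then combine disjointness with \cite{KMS} for unique ergodicity of the product. You are in fact more careful than the paper on one point: the paper simply cites \cref{thm:epsM} (which is stated for $S_M$) to check the second hypothesis of \cref{thm:disjoint trans}, whereas you explicitly note that the hypothesis concerns the set $C_M$ of \emph{cylinder} holonomies and justify that $C_M$ is again a finite union of discrete $\G_M$-orbits so that the proof of \cref{main} transfers; your direct argument that any invariant measure on the product must be a joining is likewise the unpacking of the paper's one-line observation.
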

\begin{proof}[Proof of Corollary~\ref{cor:redux} assuming Theorem \ref{thm:disjoint trans}] 
First, the flow in almost every direction on any non-arithmetic Veech surface is weak mixing \cite{AD}. \cref{thm:epsM} implies that every non-arithmetic Veech surface has the property that for every $\epsilon>0$ there exists $\eta>0$ so that for all but at most an $\epsilon$ proportion of holonomies of cylinders do not have a $\eta$-friend. These two results are the assumptions of Theorem~\ref{thm:disjoint trans} and so by Proposition~\ref{prop:criterion} the flows in almost every pair of directions on M are disjoint.

We now establish the typical unique ergodicity of the product.  Observe that the product of two flows is uniquely ergodic if and only if the flows are uniquely ergodic and disjoint. By \cite{KMS}, for every translation surface the flow in almost every direction is uniquely ergodic. Thus, for almost every $\theta,\psi$ we have $F_\theta \times F_\psi$ is uniquely ergodic. 
\end{proof}

We prove \cref{thm:disjoint trans} in the rest of this section. The key lemma is

\begin{lm}\label{lem:basic geom} 

Let $M$ be a Veech surface. If $L\subseteq \R$ is a union of intervals of length at least $1$ with density $1$, then a density 1 set of holonomies of saddle connections have lengths contained in $L$. That is, we have
\[\lim_{R\to\infty} \frac{|\{(\theta, T)\in S_M: T\in L \cap [0,R)\}|}{|\{(\theta, T)\in S_M: T\in [0,R)\}|} = 1.\]
\end{lm}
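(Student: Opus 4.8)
The plan is to prove the equivalent statement that the number of \emph{bad} saddle connections — those whose length lies in $L^c:=[0,\infty)\setminus L$ — is $o(R^{2})$, since by \cref{BNRW} applied to each of the finitely many scaled discrete $\Gamma_M$-orbits whose union is $S_M$ one has $|S_M\cap B_R|=c_M R^{2}+O(R^{2-\delta})$, so the denominator in the statement is $\asymp R^{2}$. The hypotheses on $L$ translate into two facts about $L^{c}$: inside $[0,R)$ it is a disjoint union of at most $R+O(1)$ open ``gap'' intervals, any two of which are separated by an $L$-interval of length $\ge 1$; and the total length of these gaps is $o(R)$, because $L$ has density $1$. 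Heuristically the bad set $\{\bx:\|\bx\|\in L^{c}\cap[0,R)\}$ is a union of annuli of total area $2\pi\int_{L^{c}\cap[0,R)}t\,dt=o(R^{2})$, so one expects only $o(R^{2})$ saddle connections there; the difficulty is controlling fluctuations, since summing the $O(T^{2-\delta})$ error of the counting asymptotic gap by gap over $\sim R$ gaps is far too lossy.

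To remedy this I would fix a small parameter $\eta>0$ and split the bad saddle connections $\bx$ into those that are $\eta$-\emph{isolated}, i.e.\ $|S_M\cap B^{*}_{\eta}(\bx)|=0$, and those that have an $\eta$-friend. The non-isolated bad saddle connections number at most the total number of saddle connections in $B_R$ possessing an $\eta$-friend, which by \cref{thm:epsM} (the $\eps$-friends bound for $S_M$, itself a consequence of the mean value formula for pairs) is $<\eps'\,|S_M\cap B_R|$ for all large $R$, once $\eta=\eta(\eps')$ is chosen small enough. For the isolated bad saddle connections I would run a packing estimate: being pairwise $\eta$-separated and contained in the union of the bad annuli, their number is at most $O(\eta^{-2})$ times the area of the $\tfrac{\eta}{2}$-neighbourhood of that union, which one estimates from the facts that the gaps have total length $o(R)$, that there are $O(R)$ of them, and that all lie inside $B_R$. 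Combining the two parts, letting $R\to\infty$, and then tuning $\eps'$ and $\eta$, should yield $\limsup_{R\to\infty}\#\{\text{bad s.c.\ in }[0,R)\}/R^{2}=0$, which is the lemma.

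The main obstacle is precisely the isolated part in the presence of many \emph{short} gaps: a single very thin annulus at radius $\asymp R$ can a priori carry $\asymp R/\eta$ pairwise $\eta$-separated saddle connections, and there can be $\asymp R$ such annuli, so the crude packing bound by itself is not summable to $o(R^{2})$. Closing this gap is where the real work lies, and I expect it to require a careful separation of cases rather than one clean inequality: treat the $O(|L^{c}\cap[0,R)|/\eta)$ gaps of length $\ge\eta$ using \cref{BNRW} together with its secondary terms (whose \emph{smooth} contribution is harmless after the mean value argument, only the remainder needing care); for the much thinner gaps, combine the elementary bound that an annulus of width $h$ at radius $T$ carries $O(Th+T^{2-\delta})$ saddle connections with a second application of the $\eps$-friends input, which forces any large collection of $\eta$-separated saddle connections lying in a common thin annulus to actually contain $\eta$-friends. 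Balancing these estimates against $\eps'$ and $\eta$ is, I believe, the crux of the proof.
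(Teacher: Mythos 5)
Your strategy coincides with the paper's: reduce the lemma to showing that the number of saddle connections with length in $L^c\cap[0,R)$ is $o(R^2)$ (the denominator being at least $\rho R^2$ by quadratic growth), split these into the ones having an $\eta$-friend --- controlled by \cref{thm:epsM} after choosing $\eta=\eta(\eps)$ --- and the $\eta$-isolated ones, and then bound the isolated ones by a packing argument inside the thin annuli lying over the gaps of $L$. Up to that point your outline is accurate, and in fact more careful than the paper's write-up about the structure of $L^c$ (at most $\sim R$ gaps, pairwise separated by at least $1$, of total length $o(R)$; the paper treats $L^c\cap[0,R)$ as a finite set of points and thickens each by $\ell$).

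The problem is that you stop exactly at the step that decides the lemma, and the patches you sketch do not close it. First, re-invoking the counting asymptotic gap by gap cannot work even for the gaps of length $\ge\eta$ (of which there are at most $o(R)/\eta$): the accumulated error is of order $R^{2-\delta}\cdot o(R)/\eta$, which is not $o(R^2)$ once $\delta<1$, and $\delta$ can be arbitrarily small; for those long gaps one should instead use the packing bound, which gives $\sum_i T_i\ell_i/\eta^2 = o(R^2)/\eta^2$ and is harmless for fixed $\eta$. Second, the proposed ``second application of the $\eps$-friends input,'' asserting that a large $\eta$-separated family in a thin annulus must contain $\eta$-friends, is self-contradictory: pairwise $\eta$-separated points are by definition not $\eta$-friends of one another, so no version of \cref{thm:epsM} can exclude them. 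Hence the short gaps --- up to $\sim R$ annuli, each a priori carrying up to $\sim R/\eta$ isolated points, for a potential total of order $R^2/\eta$ with $\eta$ small --- remain uncontrolled in your argument. For comparison, the paper disposes of this step in one line: it bounds, ``by comparing areas,'' the number of isolated points in each annulus $\{T\in I_i\}$ by $(\ell+\eta)^2/\eta^2$, a quantity independent of $R$, and sums over the $q<R$ gaps to get $O(R)$ in total, which vanishes after division by $\rho R^2$. You should be aware that this is the $\eta$-packing constant of a \emph{disk} of radius $\ell+\eta$, whereas the region in question is an annulus of width $\ell$ at radius up to $R$, whose $\eta$-packing number is of order $T(\ell+\eta)/\eta^2$; the scenario you describe (many separated points spread around a thin annulus) is exactly what that area comparison ignores. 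In short, you have correctly isolated the crux but not resolved it, and the one-line resolution in the paper merits scrutiny rather than citation.
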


\begin{proof}[Proof of Lemma~\ref{lem:basic geom}]
Denote an element in $S_M$ by its polar decomposition $(\theta, T)$. We want to find a bound for
\[\limsup_{R\to\infty}\frac{|\{(\theta, T)\in S_M: T\in L^c \cap [0,R)\}|}{|\{(\theta, T)\in S_M: T\in [0,R)\}|}.\]
Let $\delta>0$. We split the set in the numerator into two cases, first those elements with $\eta$-friends, and then those without $\eta$-friends. Having fixed $\delta$, choose $\eta$ as given by \cref{thm:epsM}. Then
\begin{align*}&\limsup_{R\to\infty}\frac{\abs{\{(\theta, T)\in S_M: T\in L^c \cap [0,R)\text{ and } (\theta,T)\text{ has an }\eta\text{-friend}\}}}{\abs{\{(\theta, T)\in S_M: T\in [0,R)\}}} < \delta.
\end{align*}
We now consider the case of holonomy vectors with no $\eta$-friends, i.e., 
\begin{equation}\label{eq:A1}
\limsup_{R\to\infty}\frac{|\{(\theta, T)\in S_M: T\in L^c \cap [0,R)\text{ and } (\theta,T) \text{ has no }\eta\text{-friend}\}|}{|\{(\theta, T)\in S_M: T\in [0,R)\}|}.
\end{equation}
By quadratic growth, the denominator of \eqref{eq:A1} is bounded below by $\rho R^2$ for some $\rho>0$ and $R>0$. By the assumptions on $L$ the complement $L^c \cap [0, R)$ is a finite set of $q$ points, where $q < R$. We thicken $L^c$ by placing an interval $I_i$ of length $|I_i|=\ell$ at each point. Making $\ell$ small enough we define $L^c_\ell \cap [0,R)$ to be the disjoint union of the intervals: $L^c_\ell \cap [0,R)= \bigsqcup_{i=1}^q I_i$.

So by comparing areas, for any interval $I_i$ we have
\begin{equation*}
	|\{(\theta, T)\in S_M: T\in I_i \text{ and } (\theta,T)\text{has no }\eta\text{-friend}\}| \leq \frac{\pi(\ell+\eta)^2}{\pi \eta^2} = \frac{(\ell+\eta)^2}{\eta^2} .
\end{equation*}
Summing over the $q<R$ intervals, we find
\begin{equation}\label{eq:noetafriend}
|\{(\theta, T)\in S_M: T\in L_\ell^c \cap [0,R)\text{ and } (\theta,T)\text{has no }\eta\text{-friend}\}|\, \leq\ R \frac{(\ell+\eta)^2}{\eta^2}.
\end{equation}
Since $\ell$ can be taken to be non-increasing in $R$ and $\rho,\eta$ are independent of $R$, \eqref{eq:A1} yields
\[\limsup_{R\to\infty}\frac{\{(\theta, T)\in S_M: T\in L^c \cap [0,R)\text{ and } (\theta,T) \text{ has no }\eta\text{-friend}\}}{\{(\theta, T)\in S_M: T\in [0,R)\}} \leq \limsup_{R\to\infty} \frac{1}{R} \frac{(\ell+\eta)^2}{\rho\eta^2} = 0.\]
We conclude by letting $\delta\to0$.
\end{proof}

Let $M$ be an area 1 translation surface of genus $g$ and $s$ be the length of the shortest saddle connection on $M$. Set $\sigma=\frac 1 {2g-2}$. We now fix $L \subset \R$, a union of intervals of length at least 1 with density 1. Let
\[\tilde{S}_M(\sigma,N)=\{(\theta,T): \theta \text{ is the direction of a periodic cylinder of length }T < N \text{ and volume at least } \sigma\}.\] Let 
\[\tilde{S}'_M(\sigma, N)=\left\{(\theta,T)\in \tilde{S}_M({\sigma},N):T \in L\right\}.\]

\begin{prop}\label{prop:suff} It suffices to show that there exists $\hat{c}>0$ so that for any interval $J$, and for any $N$ large enough (depending on $J$) we have 
\begin{equation}\label{eqn:key}\lambda\left(\bigcup_{(\theta,T)\in \tilde{S}'_M({\sigma},N)}B\left(\theta,\frac 1{TN}\right)\cap J\right)>\hat{c}\lambda(J).
\end{equation} 
\end{prop}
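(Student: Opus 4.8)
\emph{The plan.} By \cref{prop:criterion} --- and since weak mixing in almost every direction is a standing hypothesis of \cref{thm:disjoint trans} --- it is enough to fix a set $L\subseteq\R$ which is a union of intervals of length at least $1$ and has density $1$, and to produce, for $\lambda$-almost every $\phi$, a partial rigidity sequence for $F_\phi^t$ contained in $L$. The rigidity will come from cylinders. Given $(\theta,T)\in\tilde{S}_M(\sigma,N)$, write $Z=Z(\theta,T)$ for the associated cylinder (core direction $\theta$, circumference $T$, area $\mu(Z)\ge\sigma$), realized in flat coordinates as $\R/T\Z\times[0,h]$ with $h=\mu(Z)/T$. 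If $|\phi-\theta|<\tfrac1{TN}$, then the time-$T$ map of the straight-line flow in direction $\phi$ displaces each point of $Z$ by the vector $T(\cos\phi,\sin\phi)-T(\cos\theta,\sin\theta)$, of norm $\le T|\phi-\theta|<\tfrac1N$. Hence on the sub-cylinder $Z'\subseteq Z$ of those points whose forward $F_\phi$-trajectory stays in $Z$ throughout $[0,T]$ one has $d(F_\phi^Tx,x)<\tfrac1N$, while $\mu(Z')\ge\mu(Z)-T\cdot\tfrac1N\ge\sigma-\tfrac TN$. So as soon as, for a given $\phi$, I can produce a sequence $(\theta_k,T_k)\in\tilde{S}'_M(\sigma,N_k)$ with $N_k\to\infty$, $|\phi-\theta_k|<\tfrac1{T_kN_k}$, $T_k\to\infty$, and $\mu\big(Z'(\theta_k,T_k)\big)$ bounded below by a fixed positive constant, the times $t_k:=T_k$ lie in $L$ (because $(\theta_k,T_k)\in\tilde{S}'_M$ forces $T_k\in L$), and together with $A_k:=Z'(\theta_k,T_k)$ they form a partial rigidity sequence for $F_\phi^t$.

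\emph{Covering almost every $\phi$.} Set $G_N:=\bigcup_{(\theta,T)\in\tilde{S}'_M(\sigma,N)}B\big(\theta,\tfrac1{TN}\big)$, so that \eqref{eqn:key} reads $\lambda(G_N\cap J)>\hat c\,\lambda(J)$ for every interval $J$ and every $N$ large in terms of $J$. First I would show that $\lambda\big(\limsup_N G_N\big)=\lambda(S^1)$. If not, $E:=\bigcup_{M}\bigcap_{N\ge M}G_N^{\,c}$ has positive measure, so some $E_M:=\bigcap_{N\ge M}G_N^{\,c}$ has positive measure, and by the Lebesgue density theorem there is an interval $J$ with $\lambda(E_M\cap J)>\big(1-\tfrac{\hat c}2\big)\lambda(J)$; but $G_N\cap E_M=\varnothing$ for all $N\ge M$, so $\lambda(G_N\cap J)\le\lambda(J)-\lambda(E_M\cap J)<\tfrac{\hat c}2\lambda(J)$, contradicting \eqref{eqn:key} once $N\ge\max\{M,N(J)\}$. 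Therefore $\lambda$-a.e.\ $\phi$ lies in $G_N$ for infinitely many $N$, which furnishes $N_k\to\infty$ and witnesses $(\theta_k,T_k)\in\tilde{S}'_M(\sigma,N_k)$ with $|\phi-\theta_k|<\tfrac1{T_kN_k}$. Since the periodic directions of $M$ form a countable set, for a.e.\ $\phi$ no $\theta_k$ equals $\phi$; and since there are only finitely many cylinders of circumference below any given bound (a consequence of the quadratic growth of $|S_M\cap B_R|$) while $|\phi-\theta_k|<\tfrac1{T_kN_k}\le\tfrac1{sN_k}\to0$, the $T_k$ must tend to infinity. Hence $t_k=T_k\to\infty$.

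\emph{What remains, and the main obstacle.} Granting the two steps, for $\lambda$-a.e.\ $\phi$ the sequence $t_k=T_k$ lies in $L$, tends to $\infty$, and is a partial rigidity sequence for $F_\phi^t$; \cref{prop:criterion} then gives \cref{thm:disjoint trans}, and hence, via \cref{thm:epsM} and \cref{cor:redux}, \cref{thm:disjoint}. The step I expect to be the crux is guaranteeing that, along the witnesses extracted above, $\mu\big(Z'(\theta_k,T_k)\big)\ge\sigma-T_k/N_k$ stays bounded below by a fixed positive constant. For this I would need to keep a positive $\lambda$-proportion of each $J$ covered by balls coming from pairs with $T$ bounded away from $N$ (say $T\le\tfrac{\sigma}{2}N$); this is exactly where the calibration of the radius $\tfrac1{TN}$ against the cylinder height $h\ge\sigma/T$ enters, together with the quadratic growth of cylinder holonomies to bound the contribution of the long, thin cylinders. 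The cylinder displacement estimate and the Borel--Cantelli / Lebesgue density argument are otherwise routine.
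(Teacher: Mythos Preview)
Your overall architecture matches the paper's: a Lebesgue density argument on the complement of the covered set, together with the cylinder-displacement estimate, to extract for almost every $\phi$ a sequence of cylinder holonomies $(\theta_k,T_k)\in\tilde{S}'_M(\sigma,\infty)$ with $T_k\to\infty$ and $T_k\in L$ giving partial rigidity. The density argument and the ``$T_k\to\infty$ for a.e.\ $\phi$'' step are both fine.

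The obstacle you flag is genuine, and your proposed fix is not the paper's and is not straightforward. You need $T_k^2|\theta_k-\phi|$ bounded away from $\sigma$ (indeed, tending to $0$), but from $|\phi-\theta_k|<\tfrac{1}{T_kN_k}$ you only get $T_k^2|\theta_k-\phi|<T_k/N_k\le 1$. Your suggestion of discarding pairs with $T>\tfrac{\sigma}{2} N$ and bounding their contribution via quadratic growth runs into trouble: there are on the order of $\lambda(J)N^2$ such ``long'' pairs with direction near $J$, each contributing a ball of radius at most $\tfrac{2}{\sigma N^2}$, so their total covered measure is a fixed multiple of $\lambda(J)$ that can easily exceed $\hat c\,\lambda(J)$ (in this paper $\hat c$ ultimately comes from Vorobets' constant $c=2^{2^{4m}}$ and is tiny).

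The paper's resolution is a one-line $\epsilon$-shrinking: from \eqref{eqn:key} one deduces the same covering with radii $\tfrac{\epsilon}{TN}$ and right-hand side $\epsilon c'\lambda(J)$ for any $c'<\hat c$ (shrinking each ball by a factor $\epsilon$ reduces the covered measure of $J$ by at most a comparable factor, up to boundary terms of size $O(N^{-1})$). Since $T\le N$ for $(\theta,T)\in\tilde{S}'_M(\sigma,N)$, every witness then satisfies $T^2|\theta-\phi|<\epsilon$; hence for each $\epsilon>0$ the set $\bigcup_{(\theta,T)} B(\theta,\tfrac{\epsilon}{T^2})$ has full measure, and running $\epsilon\to 0$ yields, for a.e.\ $\phi$, witnesses with $T_j^2|\theta_j-\phi|\to 0$. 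Your cylinder computation (packaged in the paper as Lemma~\ref{lem:rigid}) then gives $\sigma$-partial rigidity with the full constant $\sigma$, and the ratio $T/N$ never needs to be tracked.
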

\begin{proof}[Proof of Proposition~\ref{prop:suff}] We first need a lemma connecting being close to the directions of cylinders which have area at least $c>0$ to  partial rigidity sequences.
\begin{lm}\label{lem:rigid} If $v_i$ are the holonomies of the circumferences of cylinders on $Q$ with area at least $c$, $\theta_i$ are their directions and 
\[\underset{i \to \infty}{\lim}\, \abs{v_i}^2 \abs{\theta_i-\theta}=0\] then $|v_i|$ is a $c$-partial rigidity sequence for $F_{r_\theta Q}$.  
\end{lm}
The proof of Lemma~\ref{lem:rigid} follows directly from the proof of \cite[Lemma 12]{CH}. 

We now show that with Lemma~\ref{lem:rigid}, Proposition~\ref{prop:suff} follows from the Lebesgue density theorem because \eqref{eqn:key} for all intervals implies that for any $c'<\hat{c}$, for all large enough $N$ we have 
\begin{equation}
\label{eqn:key2}\lambda\left(\bigcup_{(\theta,T)\in \tilde{S}'_M({\sigma},N)}B\left(\theta,\frac \epsilon{TN}\right)\cap J\right)>\epsilon c'\lambda(J).
\end{equation}
To see how \eqref{eqn:key2} follows from \eqref{eqn:key},  if $B(\theta, \frac 1 {TN})\subset J$ then $\lambda( B(\theta, \frac \epsilon {TN}) \cap J)=\epsilon \lambda(B(\theta,\frac 1 {TN})\cap J)$. Now  
\[\lambda\left(J\cap \bigcup_{(\theta,T)\in \tilde{S}_M(\sigma,N):B(\theta,\frac 1{TN})\not \subset J}B\left(\theta,\frac 1 {TN}\right)\right)\leq |\partial J|\frac 1 {sN}=\frac 2{sN}\] which clearly goes to zero as $N$ goes to infinity. 

Now \eqref{eqn:key2} (and the fact that $N\geq T$) establishes that for any $\epsilon>0$, the complement of \[\bigcup_{(\theta,T)\in \tilde{S}'_M({\sigma},N)}B\left(\theta,\frac \epsilon{T^2}\right)\] has no Lebesgue density points and so $\cup_{(\theta,T)\in \tilde{S}'_M({\sigma},N)}B(\theta,\frac \epsilon{T^2})$ has full measure. So for almost every $\phi \in S^1$ there exists $(\theta_j,T_j)\in \tilde{S}'_M(\sigma, \infty)$ so that 
\[\underset{j \to \infty}{\lim}T_j^2|\theta_j-\phi|=0.\] If $\phi$ is not in the countable set of direction of elements of $\tilde{S}'_M({\sigma},\infty)$ then the $T_j$ necessarily tend to infinity. By Lemma \ref{lem:rigid}, the $T_j$ are a $\sigma$-partial rigidity sequence for $F_\phi^t$ and we have established the theorem. Thus we have shown that \eqref{eqn:key} implies Theorem \ref{thm:disjoint trans}. 
\end{proof}

To prove \eqref{eqn:key}, we use a result of Vorobets.

\begin{thm}\label{thm:vor} (Vorobets, \cite[Page 16]{Vor}) Let $m$ be the sum of the multiplicities of singularities. Let $c=2^{2^{4m}}$ 
For large enough $N$ we have 
\begin{equation}\label{eqn:keyVor}
\bigcup_{(\theta,T)\in \tilde{S}_M({\sigma},N)}B\left(\theta,\frac {c^2}{TN}\right)=S^1.
\end{equation}
\end{thm}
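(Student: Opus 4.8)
The plan is to deduce \eqref{eqn:keyVor} from a single ``local'' statement, which is the real content of \cite{Vor} and which I would simply cite: \emph{there is a constant $c_0=c_0(m)$ such that every area-one translation surface in the same stratum as $M$ carries at least one cylinder of area $\geq\sigma$ and circumference $\leq c_0$.} This is where the doubly-exponential size originates; Vorobets's bookkeeping gives $c_0=2^{2^{4m}-1}$, and in the deduction below one takes $c=2c_0=2^{2^{4m}}$.

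Granting this, fix $\theta_0\in S^1$ and a rotation $k$ taking the direction $\theta_0$ to the vertical. For $t\geq 0$ set $g_t=\left(\begin{smallmatrix}e^{t}&0\\0&e^{-t}\end{smallmatrix}\right)$ and apply the statement to $N_t:=g_t\,k\,M$, which has area one and lies in the stratum of $M$: it carries a cylinder $\mathcal C_t$ of area $\geq\sigma$ whose core has holonomy $v=(v_1,v_2)$ with $v_1^2+v_2^2\leq c_0^2$. Transporting $\mathcal C_t$ back to $M$ by $g_{-t}$ and then $k^{-1}$ gives a genuine cylinder on $M$ of area $\geq\sigma$, with core holonomy $k^{-1}g_{-t}v$; write $\theta$ for its direction and $T=\|k^{-1}g_{-t}v\|=\|g_{-t}v\|$ for its circumference. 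Since $e^{-2t}\leq e^{2t}$ for $t\geq0$ we have $T\leq e^{t}c_0$, and since $g_{-t}v=(e^{-t}v_1,e^{t}v_2)$ while $k^{-1}$ is an isometry taking the vertical back to $\theta_0$, the angle $\psi\in[0,\tfrac\pi2]$ between $\theta$ and $\theta_0$ obeys $T\sin\psi=e^{-t}|v_1|\leq e^{-t}c_0$, hence $T\psi\leq\tfrac\pi2 e^{-t}c_0$ using $\psi\leq\tfrac\pi2\sin\psi$ on $[0,\tfrac\pi2]$. Now pick $t$ with $\tfrac{\pi N}{8c_0}<e^{t}<\tfrac{N}{c_0}$, a nonempty interval which meets $[1,\infty)$ for all large $N$: then $T\leq e^{t}c_0<N$, so $(\theta,T)\in\tilde{S}_M(\sigma,N)$, while $T\psi\leq\tfrac\pi2 e^{-t}c_0<\tfrac\pi2\cdot\tfrac{8c_0}{\pi N}\cdot c_0=\tfrac{4c_0^{2}}{N}=\tfrac{c^{2}}{N}$, i.e.\ $\psi<\tfrac{c^{2}}{TN}$, so $\theta_0\in B\!\left(\theta,\tfrac{c^{2}}{TN}\right)$. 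As $\theta_0$ was arbitrary, \eqref{eqn:keyVor} holds for all large $N$.

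So the whole matter reduces to the local cylinder statement, which is \cite[Page 16]{Vor} and which I expect to be the genuine obstacle, hence would not reprove; for orientation, here is the idea behind it. On an area-one surface $N$ one begins with a shortest saddle connection $\gamma$, of length controlled in terms of the stratum (an embedded flat disc about an interior point of $\gamma$ has area $\leq1$). Rotating $\gamma$ to be horizontal and cutting $N$ along $\gamma$ and the horizontal separatrices through its endpoints, either a cylinder appears carrying a definite fraction of the area with circumference controlled by $|\gamma|$, or $N$ decomposes into a translation surface of strictly smaller combinatorial complexity, to which the procedure is reapplied. The induction terminates after a number of stages bounded in terms of $m$, and each stage roughly replaces the working length scale by its square — one passes to a cross-section and inverts a previously bounded quantity — which compounds to $c_0\ll 2^{2^{4m}}$. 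The hard part is precisely this bookkeeping: maintaining simultaneous quantitative control of circumferences, areas, and the complexity parameter through every cut.
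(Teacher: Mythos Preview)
Your argument is correct, and it differs from the paper's derivation in an instructive way. The paper quotes Vorobets directly in the \emph{covering} form: for $N\geq c$ and $\lambda=N/c$ one has $S^1=\bigcup_{(\theta,T)\in\tilde S_M(\sigma,N)} A_{(\theta,T)}(\lambda)$ with $A_{(\theta,T)}(\lambda)=\{\phi:T|\sin(\theta-\phi)|\leq c/\lambda\}$, and then only converts $|\sin(\theta-\phi)|$ to $|\theta-\phi|$ via $|\sin\psi|\geq\tfrac12|\psi|$ for small $\psi$. You instead cite the more primitive local statement (a bounded cylinder of definite area on every area-one surface in the stratum) and manufacture the covering yourself by pushing each direction $\theta_0$ along the Teichm\"uller geodesic $g_t$ and transporting the guaranteed cylinder back. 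The paper's route is shorter since it cites the end product; your route is more transparent about the mechanism, making explicit that the doubly-exponential constant enters once in the local cylinder bound and is then only squared by the $g_t$-transport (one factor for the circumference, one for the angle), matching the $c^2$ in \eqref{eqn:keyVor}. One small point: your claim $\psi\in[0,\tfrac\pi2]$ tacitly uses that cylinder directions are unoriented, so you may replace $v$ by $-v$ to force the angle with the vertical into $[0,\tfrac\pi2]$; this is harmless but worth saying.
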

\begin{proof}[Derivation of Theorem \ref{thm:vor} from Vorobets] Let $(\theta,T) \in S^1 \times \R^+$. For $\alpha>0$ let $A_{(\theta,T)}(\alpha)$ be the set of directions $\phi$ so that  
\[T|\sin(\theta-\phi)|\leq \lambda^{-1} c.\] Vorobets shows that for any $N\geq c$ and $\lambda=\frac N c$ we have 
\[S^1=\bigcup_{(\theta,T)\in \tilde{S}_M(\sigma,N)}A_{(\theta,T)}(\lambda).\]
If $N$ is big enough, (depending on the shortest saddle connection in $M$)  ${T\sin(\theta-\phi)\leq (\frac Nc)^{-1} c}$ implies $|\sin(\theta-\phi)|\geq \frac 1 2 |\theta-\phi|$. 
So for large enough $N$ we have that for every $\phi$ there exists $(\theta,T) \in \tilde{S}_M(\sigma,N)$ so that $T\frac 1 2 |\theta-\phi|\leq T|\sin(\theta-\phi)|\leq (\frac Nc)^{-1} c$ or that $|\phi-\theta|\leq \frac {c^2}{TN}$. 
\end{proof}
\begin{rmk}For the reader's convenience, we briefly connect our notation to the notation in \cite{Vor}. There $T_0$ plays the role of $c$.  Because we are assuming our translation surface has area 1 the $\sqrt{S}$ term (which denotes the area of the translation surface) does not appear. Also note that Vorobets allows for a smaller $c$ when $m=1,2$. In \cite{Vor}, $T$ plays the role of $R$. 
\end{rmk}

\begin{proof}[Proof of Theorem \ref{thm:disjoint trans}] By Proposition \ref{prop:suff} it suffices to show that \eqref{eqn:keyVor} implies \eqref{eqn:key}. Let $J$ be an interval. Observe that  
\begin{multline}\label{eq:wrap}\lambda\left(\bigcup_{(\theta,T)\in \tilde{S}'_M({\sigma},N)}B\left(\theta,\frac 1{TN}\right)\cap J\right)\geq \\
\frac{1}{c^2}\lambda\left(\bigcup_{(\theta,T)\in \tilde{S}_M({\sigma},N)}B\left(\theta,\frac {c^2}{TN}\right)\cap J\right)-\sum_{(\theta,T) \in \tilde{S}_M(\sigma,N)\setminus \tilde{S}'_M(\sigma,N)} 2\frac {c^2}{TN}.
\end{multline}
By Lemma \ref{lem:basic geom} we may assume $|\tilde{S}_M(\sigma,N)\setminus \tilde{S}'_M(\sigma,N)|<\delta N^2$ for all $N$ large enough. Thus for large enough $N$ , by Abel summation,
\[\sum_{(\theta,T) \in \tilde{S}_M(\sigma,N)\setminus \tilde{S}'_M(\sigma,N)} 2\frac {c^2}{TN} < 4c^2\delta.\]
 So for any $\delta>0$ we can bound the right hand side of  \eqref{eq:wrap} by $\frac 1 {c^2}\lambda(J)-C\delta$. This establishes  \eqref{eqn:key}.
\end{proof}

\begin{rmk} In the previous referenced uses of similar disjointness arguments, \cite{ChaThesis}, \cite{CH} one does not assume that the systems are weakly mixing. The author of the appendix considers one of the values of this appendix to be presenting a disjointness criterion of this flavor that is simplified by the presence of density one mixing sequences.  However, similar to those earlier arguments, one can remove the condition that the flow in almost every direction on $M$ is weakly mixing by using \cite[Proposition 2]{CH} in place of Lemma \ref{lem:disjoint crit}. Lemma \ref{lem:basic geom} provides the necessary input for this criterion as well. Compare with \cite[Corollary 5]{ChaThesis} which uses \cite[Corollary 3]{ChaThesis}. 
By a similar argument to the one above, one can show that any Veech surface (including arithmetic Veech surfaces) has the property that the flow in almost every pair of directions is disjoint. However, even without the main result of the present paper, one can use similar spectral methods to prove that for any branched translation cover  of a torus (a class of surfaces that includes all arithmetic Veech surfaces, which are branched translation covers of tori, branched over one point) the flow in almost every pair of directions is disjoint and so we omit it. 
\end{rmk}


\begin{ack}
We would like to thank MSRI (now SLMath) where this project was started in the Fall of 2019 during the program on Holomoprhic Differentials in Mathematics and Physics.  The authors would like to thank Jayadev Athreya, Max Goering, Jiyoung Han, and Pedram Safaee for useful discussions, and the anonymous referees for their comments. 
\end{ack}

\begin{funding}
C.~B.~is supported by the Swiss National Science Foundation Grant No.~201557, and would like to thank the Hausdorff Institute for Mathematics in Bonn, where part of this work was completed in the Summer of 2021. S.~F.~was partially supported by the Deutsche Forschungsgemeinschaft (DFG)--Projektnummer 44546644. J.~C.~ is supported by NSF grants DMS-2055354 and DMS-452762, the Sloan foundation, Poincar{\'e} chair, and Warnock chair.
\end{funding}









\bibliographystyle{emsjems}
\bibliography{Sources}
\end{document}